\newcommand{\id}{\,\mathrm{d}}
\DeclareMathOperator{\ad}{ad}
\DeclareMathOperator{\im}{im}
\newcommand{\os}{\circledast}
\renewcommand{\exp}{\mathop{\rm exp}\nolimits}
\newtheorem{theorem}{Theorem}[section]
\newtheorem{lemma}[theorem]{Lemma}
\newtheorem{corollary}[theorem]{Corollary}
\newtheorem{proposition}[theorem]{Proposition}
\theoremstyle{definition}
\newtheorem*{rem}{Remark}
\newtheorem{definition}[theorem]{Definition}
\newtheorem{question}{Question}
\title{Sphere-like Isoparametric Hypersurfaces in Damek--Ricci Spaces}
\begin{document}
\author{Bal\'azs Csik\'os}
\address{B.~Csik\'os, Dept.\ of Geometry, ELTE E\"otv\"os Lor\'and University,  P\'azm\'any P\'eter stny.\@ 1/C, H-1117 Budapest, Hungary.}
\email{\href{mailto:balazs.csikos@ttk.elte.hu}{balazs.csikos@ttk.elte.hu}, \href{mailto:csikos.balazs@gmail.com}{csikos.balazs@gmail.com}}

\author{M\'arton Horv\'ath}
\address{M. Horv\'ath, Dept.\ of Algebra and Geometry, Institute of Mathematics, Budapest University of Technology and Economics, M\H{u}\-egye\-tem rkp.\@ 3., H-1111 Budapest, Hungary.} 
\email{\href{mailto:horvathm@math.bme.hu}{horvathm@math.bme.hu}}
\date{}
\thanks{The research was supported by the NKFIH Grant K128862.}
\keywords{Isoparametric hypersurface, focal variety, Damek--Ricci space, mean curvature}
\subjclass[2020]{Primary 53C25, Secondary 53C40, 53C30}
\maketitle
\begin{abstract}
Locally harmonic manifolds are Riemannian manifolds in which small geodesic spheres are isoparametric hypersurfaces, i.e., hypersurfaces whose nearby parallel hypersurfaces are of constant mean curvature. Flat and rank one symmetric spaces are examples of harmonic manifolds. Damek--Ricci spaces are non-compact harmonic manifolds, most of which are non-symmetric. Taking the limit of an ``inflating'' sphere through a point $p$ in a Damek--Ricci space as the center of the sphere runs out to infinity along a geodesic half-line $\gamma$ starting from $p$, we get a horosphere. Similarly to spheres, horospheres are also isoparametric hypersurfaces. In this paper, we define  the sphere-like hypersurfaces obtained by ``overinflating the horospheres'' by pushing the center of the sphere beyond the point at infinity of $\gamma$ along a virtual prolongation of $\gamma$. They give a new family of isoparametric  hypersurfaces in Damek--Ricci spaces connecting geodesic spheres to some of the isoparametric hypersurfaces constructed by J.~C.~D\'iaz-Ramos and M.~Dom\'inguez-V\'azquez \cite{Ramos_Vazquez} in Damek--Ricci spaces. We study the geometric properties of these isoparametric hypersurfaces, in particular their homogeneity and the totally geodesic condition for their focal varieties.
\end{abstract}

\section{Introduction}
A hypersurface in a Riemannian manifold is called isoparametric if its nearby parallel hypersurfaces have constant mean curvature. 
B.~Segre \cite{Segre} proved that isoparametric hypersurfaces of the Euclidean space $\mathbb R^n$ are the tubes about a $k$-dimensional subspace for some $0\leq k\leq n-1$. A systematic study of isoparametric hypersurfaces was initiated by \'E.~Cartan \cite{Cartan}. Cartan proved that in spaces of constant curvature, a hypersurface is isoparametric if and only if the multiset of its principal curvatures is constant and he also classified these hypersurfaces in hyperbolic spaces, where, in addition to tubes about totally geodesic subspaces, the family of isoparametric hypersurfaces contains also horospheres. The classification of isoparametric hypersurfaces in the sphere turned out to be a much more subtle problem. This is related to the fact that although all isoparametric hypersurfaces in the Euclidean and hyperbolic spaces are homogeneous, H.~Ozeki and M.~Takeuchi \cite{Ozeki_Takeuchi_1}, \cite{Ozeki_Takeuchi_2}, and D.~Ferus, H.~Karcher, and H.-F.~M\"unzner \cite{Ferus_Karcher_Munzner} found infinitely many non-homogeneous isoparametric hypersurfaces in spheres. The classification of isoparametric hypersurfaces in spheres has been achieved in a sequence of papers by  J.~Dorfmeister and E.~Neher \cite{Dorfmeister_Neher}, T.~E.~Cecil, Q.-S.~Chi, and G.~R.~Jensen \cite{Cecil_Chi_Jensen}, Q.-S.~Chi \cite{Chi1}, \cite{Chi2}, \cite{Chi3}, \cite{Chi4}, and R.~Miyaoka \cite{Miyaoka}, \cite{Miyaoka_errata}.
A detailed survey of the history of the classification of isoparametric hypersurfaces in spaces of constant curvature can be found in Q.-S.~Chi \cite{Chi_survey}.

There are many results also on the classification of isoparametric hypersurfaces in rank one symmetric spaces. Regular orbits of isometric cohomogeneity one actions are always isoparametric, so it is a natural step to classify such actions. As for rank one symmetric spaces of non-compact type, J.~Berndt and H.~Tamaru \cite{Berndt_Tamaru} classified these actions on the complex hyperbolic spaces and on the Cayley hyperbolic plane, and later J.~C.~D\'iaz-Ramos, M.~Dom\'inguez-V\'azquez, and A.~Rodr\'iguez-V\'azquez \cite{Diaz-Ramos_Dominguez-Vazquez_Rodriguez-Vazquez} complemented their result by a classification of cohomogeneity one isometric actions on the quaternionic hyperbolic space up to orbit equivalence. Later J.~C.~D\'iaz-Ramos and  M.~Dom\'inguez-V\'azquez \cite{Diaz-Ramos_Dominguez-Vazquez} constructed a family of non-homogeneous isoparametric hypersurfaces in  complex hyperbolic spaces based on which J.~C.~D\'iaz-Ramos,  M.~Dom\'inguez-V\'azquez, and V.~Sanmart\'in-L\'opez \cite{Diaz-Ramos_Dominguez-Vazquez_Sanmartin-Lopez} could complete the classification of isoparametric hypersurfaces in complex hyperbolic spaces.

Harmonic manifolds have many properties in common with flat and rank one symmetric spaces, which are harmonic as well, so it is natural to investigate isoparametric hypersurfaces also in harmonic manifolds. Locally harmonic manifolds were introduced by E.~T.~Copson and H.~S.~Ruse \cite{Copson_Ruse} as Riemannian manifolds admitting a non-constant harmonic function in a punctured neighborhood of any point $p$ which depends only on the distance of the variable point from $p$. A.~J.~Ledger \cite{Ledger} showed that a locally symmetric space is locally harmonic if and only if it is flat or has rank one. In 1944 A.~Lichnerowicz \cite{Lichnerowicz} conjectured that locally harmonic manifolds of dimension $4$ are necessarily locally symmetric spaces and posed the question whether this holds in higher dimensions as well. The Lichnerowicz conjecture was proved by  A.~G.~Walker \cite{Walker} in dimension $4$, and  by Y.~Nikolayevsky \cite{Nikolayevsky} in dimension $5$.  
Z.~I.~Szab\'o \cite{Szabo} proved the Lichnerowicz conjecture for manifolds having compact universal covering space. G.~Knieper \cite{Knieper} confirmed the Lichnerowicz conjecture for all compact harmonic manifolds without focal points or with Gromov hyperbolic fundamental groups.  As for the non-compact case, the answer to the question of Lichnerowicz is negative in infinitely many dimensions starting at $7$.  E.~Damek and F.~Ricci \cite{Damek_Ricci} noticed that certain solvable extensions of some Heisenberg-type Lie groups  become globally harmonic manifolds if we choose a suitable left invariant Riemannian metric on them, but they happen to be symmetric only if the used Heisenberg-type group has a center of dimension $1$, $3$, or $7$. We refer to the book \cite{DamekRicci} by J.~Berndt, F.~Tricerri, and L.~Vanhecke for more details on Damek--Ricci spaces. At present harmonic symmetric spaces and Damek--Ricci spaces are the only known examples of harmonic manifolds. In 2006 J.~Heber \cite{Heber} showed that a simply connected homogeneous harmonic manifold is either flat, or a rank one symmetric space, or a Damek--Ricci space. The existence of non-homogeneous harmonic manifolds is still an open problem.

There are many characterizations of harmonic manifolds. We refer to \cite[Section 2.6]{DamekRicci} for a list of the most important ones. E.~T.~Copson and H.~S.~Ruse \cite{Copson_Ruse} proved that local harmonicity holds if and only if small geodesic spheres are isoparametric hypersurfaces. For a non-compact, complete, connected, simply connected, and locally harmonic manifold, the exponential map at any point $p$ is a diffeomorphism between the tangent space at $p$ and the manifold, in particular, there are no conjugate points along geodesic curves, and all geodesic spheres are isoparametric hypersurfaces.

If $M$ is a complete, connected and simply connected Riemannian manifold with no conjugate points, $\xi\in T_pM$ is a unit tangent vector, $\gamma\colon \mathbb R\to M$  is the geodesic curve with initial velocity $\xi=\gamma'(0)$, then the Busemann functions $b_{\xi}^+$ and $b_{\xi}^-$ of $\xi$ is defined as 
\[
b^{\pm}_{\xi}(x)=\lim_{t\to \pm\infty}b_{\xi,t}(x), \text{ where }b_{\xi,t}(x)=d(x,\gamma(t))- |t|.
\]
Horospheres are the level sets of Busemann functions. For $r\in \mathbb R$, the equation $b_{\xi,r}(x)=0$ defines the geodesic sphere $\Sigma_{r}^{\gamma}$ of radius $|r|$ centered at $\gamma(r)$, respectively. As $r$ tends to $\pm\infty$, these ``inflating'' spheres tend to the opposite horospheres $\Sigma_{\pm \infty}^{\gamma}$ with equation $b^{\pm}_{\xi}(x)=0.$ This family of geodesic spheres and horospheres belong to a one-parameter family of hypersurfaces parameterized by $r\in \bar{\mathbb R}=\mathbb R\cup\{-\infty,\infty\}$. (Strictly speaking, spheres of radius $0$ are not hypersurfaces, but we think of them as degenerate hypersurfaces.)
In general,  one can prove only $\mathcal C^1$-differentiability of the Busemann functions and the horospheres \cite{Eschenburg2}, however, A.~Ranjan and H.~Shah \cite{Ranjan_Shah} proved that in a non-compact, complete, connected, simply connected, and harmonic manifold, both Busemann functions and horospheres are analytic.

In the case of a harmonic manifold, the union of the family $\Sigma_r^{\gamma}$, $r\in \bar{\mathbb R}$ is the union of two opposite horoballs tangent to one another at $p$. Applying the maximum principle for hypersurfaces \cite[Theorem~1]{Eschenburg3}, these two horoballs cover the whole space if and only if $\Sigma_{-\infty}^{\gamma}=\Sigma_{+\infty}^{\gamma}$ and the horoshperes are minimal hypersurfaces, which happens if and only if the harmonic manifold is Euclidean \cite[Proposition 2.4]{Knieper}. Thus, in the non-flat case, there is a gap between the two opposite horoballs. 

In the case of the real hyperbolic space, using Poincar\'e's conformal model on a Euclidean ball, the spheres and horospheres $\Sigma_r^{\gamma}$, $r\in \bar{\mathbb R}$ are those members of a parabolic pencil of Euclidean spheres that are contained in the closure of the model. The gap between the two opposite horoballs are covered by those members of the pencil that are sticking out of the model. Intrinsically, the intersection of a protruding sphere with the model is a parallel hypersurface of a hyperplane orthogonal to the geodesic curve $\gamma$.

Having in mind the example of the hyperbolic space,  the following question seems to be interesting for an arbitrary non-compact, non-flat, simply connected complete harmonic manifold.
\begin{question}\label{Q:1}
    Is there a kind of natural (analytic) prolongation of the family $ \Sigma_{r}^{\gamma}$ that fills the gap between the horospheres $\Sigma_{-\infty}^{\gamma}$ and $\Sigma_{+\infty}^{\gamma}$?
\end{question}

There are several ways to define what we mean by an analytical prolongation of the family $\Sigma_{r}^{\gamma}$. One approach, which will be used below is that we try to write the equation of $\Sigma_{r}^{\gamma}$ in a form $F(x,\phi(r))=0$, where $\phi\colon \bar{\mathbb R}\to [-1,1]$ is a homeomorphism, which is analytic on $\mathbb R$, and $F\colon M\times [-1,1]\to \mathbb R$ is an analytic function. Then we extend the domain of $F$ by analytic continuation as far as possible, and prolong the family of spheres $\Sigma_{r}^{\gamma}$ with the hypersurfaces $\tilde\Sigma_{\theta}^{\gamma}$ defined by the equations $F(x,\theta)=0$ for $|\theta|>1$. 

\begin{question}\label{Q:2}
    If there is an analytic extension, then what can we say about the geometry of the hypersurfaces $\tilde\Sigma_{\theta}^{\gamma}$ for $|\theta|>1$? 
\end{question}

For example, it was proved by Z.~I.~Szab\'o \cite{Szabo} that in a harmonic manifold, the volume of the intersection of two geodesic balls of small radii depends only on the radii and the distance between the centers. The authors proved in \cite{Csikos_Horvath_2gomb, Csikos_Horvath} that this property characterizes harmonic manifolds even if this property is assumed only for balls of the same radius. It seems to be an interesting question whether analogous theorems can be proved for overinflated spheres. Some results in this direction were obtained by S.~Kim and J.~H.~Park \cite{Kim_Park}.

The main goal of this paper is to construct the family of ``overinflated spheres'' $\tilde\Sigma_{\theta}^{\gamma}$ for $|\theta|>1$ in the Damek--Ricci spaces, and study their geometric properties. As it can be expected, all the hypersurfaces $\tilde\Sigma_{\theta}^{\gamma}$ are isoparametric.  The overinflated spheres are tubes about their focal varieties, which are known to be minimal submanifolds. 

The paper is structured as follows. In Sections \ref{sec:isoparametric}, \ref{sec:Damek-Ricci}, \ref{sec:J^2}, we collect preliminaries that will be needed later on isoparametric functions, on Damek--Ricci spaces and on the $J^2$-condition for $\mathfrak v$-vectors in the Damek--Ricci Lie algebra. Most of the facts listed here are known, but we add some proofs for the sake of the reader.  

As the underlying Lie group of Damek--Ricci spaces is an exponential Lie group, Damek--Ricci spaces can be modelled on the Lie algebra of this Lie group. In Section \ref{sec:half_space_model}, we introduce the so-called half-space model of Damek--Ricci spaces, which will be more useful for our constructions. For example, in Theorem \ref{thm:geodesic_types}, we show that geodesic lines are represented in the half-space model by the intersection of the model with a conic section or a straight line sticking out of the model. Thus, the corresponding conic section or straight line provides a virtual continuation of the geodesic line beyond its points at infinity.

Concentric geodesic spheres are the level sets of the distance function $d_{x_0}$ from the common center $x_0$ of the spheres.  In Section \ref{sec:distance_like_functions}, we find a modification $D_{x_0}$ of the distance function $d_{x_0}$ such that the modified function has the same level sets as $d_{x_0}$, but the modified function $D_{x_0}$ makes sense also if the point $x_0$ is moving out of the half-space model into the complementary half-space. In Theorem \ref{thm:D_x_0_isoparametric}, we verify that the functions $D_{x_0}$ are isoparametric for every point $x_0$ of the affine space containing the half-space model. In Section \ref{sec:focal varieties}, we compute the equation of the focal varieties $\mathcal F_{x_0}$ of the isoparametric hypersurfaces obtained as the regular level sets of the functions $D_{x_0}$.

Based on the results of preceding sections, Section \ref{sec:prolongation} answers Question \ref{Q:1} in Damek--Ricci spaces by constructing an analytic prolongation of the family of spheres $\Sigma^{\gamma}_r$ explicitly. Members of the prolongation are level sets of the functions $D_{x_0}$ passing through $p=\gamma(0)$, where $x_0$ is running over those points of the conic section or straight line containing the geodesic line $\gamma$ that do not belong to the half-space model. When $\gamma$ is contained in an ellipse in the half-space model, the prolongation yields a continuous transition between the opposite horospheres $\Sigma^{\gamma}_{\pm\infty}$. However, if $\gamma$ is contained in a parabola or a straight line such a transition is obstructed by the lack of the definition of the function $D_{x_0}$ for the case when $x_0$ is a point at infinity of the projective closure of the affine space containing the half-space model.
To fix this problem, we compute the limit of a suitable rescaling of the function $D_{x_0}$ as $x_0$ tends to infinity along the parabola or straight line containing $\gamma$. It turns out that the limit depends on $\gamma$, so it will be denoted by $D^{\gamma}_{\os}$. The limit functions $D^{\gamma}_{\os}$ are isoparametric, consequently their regular level sets are tubes about the singular level set $\mathcal F_{\os}^{\gamma}$ of $D^{\gamma}_{\os}$. The regular level sets of the functions $D^{\gamma}_{\os}$ belong to the family of isoparametric hypersurfaces constructed by J.~C.~D\'iaz-Ramos and M.~Dom\'inguez-V\'azquez \cite{Ramos_Vazquez}.

In Section \ref{sec:orthogonal_geodesics}, we study the family of geodesic curves meeting a given focal variety $\mathcal F_{x_0}$ or $\mathcal F_{\os}^{\gamma}$ orthogonally. These geodesic curves intersect each tube about the focal variety orthogonally. In the case of $\mathcal F_{x_0}$, we prove that the prolongations of these geodesics meet at the point $x_0$, and conversely, any geodesic curve, the prolongation of which goes through $x_0$ intersects the focal variety  $\mathcal F_{x_0}$ orthogonally at some point $p$. We also prove that the points at infinity of the geodesic  separate the points $x_0$ and $p$  harmonically along the prolongation of $\gamma$. This implies that for a given geodesic curve $\gamma$ for all points $p$ of $\gamma$, except for at most one point $p^*$, there is a unique focal variety of the type $\mathcal F_{x_0}$ that meets $\gamma$ at $p$ orthogonally, and the focal varieties of the form $\mathcal F_{\os}^{\eta}$ can meet $\gamma$ orthogonally only at the exceptional point $p^*$. If the exceptional point $p^*$ exists, then  $\mathcal F_{\os}^{\gamma}$ is defined and meets $\gamma$ orthogonally at $p^*$. We prove that in a symmetric Damek--Ricci space, no other focal varieties of the type $\mathcal F_{\os}^{\eta}$ can intersect $\gamma$ orthogonally at $p^*$, but if the space is not symmetric, such focal varieties can exist.

If a Damek--Ricci space is symmetric, then the focal varieties constructed in this paper are totally geodesic submanifolds. The general case is considered in Section \ref{sec:totally_geodesic}. We prove that if the space is not symmetric, then none of the focal varieties of the form $\mathcal F_{x_0}$ are totally geodesic. However, each focal variety $\mathcal F_{x_0}$ has at least one point $p$ such that $\mathcal F_{x_0}$ is the image of $T_p\mathcal F_{x_0}$ under the exponential map $\exp_p$. The set of such points is homeomorphic to the set of vectors satisfying the $J^2$-condition. Focal varieties of the form $\mathcal F_{\os}^{\eta}$ behave differently. They are homogeneous, so they are either totally geodesic or do not have such a point. It will be proved that the focal variety $\mathcal F_{\os}^{\eta}$ is totally geodesic if and only if a certain vector which defines it uniquely up to left translation satisfies the $J^2$-condition. We remark that totally geodesic submanifolds of Damek--Ricci spaces have been classified by  S.~Kim, Y.~Nikolayevski, and J.~H.~Park \cite{Kim_Nikolayevski_Park}. They showed that they are either subgroups (“smaller” Damek--Ricci spaces) or isometric to rank-one symmetric spaces of negative curvature. When the ambient Damek--Ricci space is not symmetric, totally geodesic focal varieties $\mathcal F_{\os}^{\eta}$ belong to the first group.

Section \ref{sec:homogeneity} is devoted to the study of homogeneity of the overinflated spheres. They are all homogeneous in the symmetric case. In a non-symmetric Damek--Ricci space, it will be proved that tubes about a focal variety are homogeneous exactly in those cases, when the focal variety is totally geodesic.

Finally, in Section \ref{sec:mean_curvature}, we give an explicit formula for the mean curvature of the overinflated spheres as a function of their tube radius about their focal variety. 

\section{Isoparametric functions\label{sec:isoparametric}}

\begin{definition}
A smooth function $F\colon M\to \mathbb R$ defined on a Riemannian manifold $M$ is said to be isoparametric if there exist a continuous function $a\colon F(M)\to \mathbb R$ and a $\mathcal C^2$ function $b\colon F(M)\to \mathbb R$ such that 
\begin{equation}\label{eq:isoparametric_a_b}
    \Delta F=a\circ F\qquad \text{ and }\qquad\|\nabla F\|^2=b\circ F.
\end{equation}
\end{definition}

The geometrical meaning of the second, so-called transnormality condition is that nearby regular level sets of $F$ are parallel hypersurfaces. The mean curvature $H$ of a hypersurface $\Sigma$ can be expressed as $H=\frac{1}{\dim \Sigma}h$, where $h$ is the trace of the shape operator of $\Sigma$ (with respect to a fixed unit normal). The following proposition shows that the regular level sets of an isoparametric function are isoparametric hypersurfaces and gives a formula for their mean curvature.
\begin{proposition}\label{prop:tube_mean_curvature}
 If $F$ is an isoparametric function  satisfying equations \eqref{eq:isoparametric_a_b}, then the trace $h$ of the shape operator of a regular level set $F^{-1}(c)$ of $F$ with respect to the unit normal vector field $\mathbf N=\frac{\nabla F}{\sqrt{b\circ F}}$ is expressed by
 \[
 h=\frac{-2 a(c)+b'(c)}{2\sqrt{b(c)}}.
 \]
 \end{proposition}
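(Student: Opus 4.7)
The plan is to exploit the identity $\Delta F = \operatorname{div}(\nabla F)$ together with the factorisation $\nabla F = \|\nabla F\|\, \mathbf{N}$, and then to relate $\operatorname{div}(\mathbf{N})$ to the trace $h$ of the shape operator.

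First I would write
\[
\Delta F = \operatorname{div}\bigl(\|\nabla F\|\,\mathbf{N}\bigr) = \bigl\langle \nabla\|\nabla F\|, \mathbf{N}\bigr\rangle + \|\nabla F\|\,\operatorname{div}(\mathbf{N})
\]
by the Leibniz rule for the divergence of the product of a scalar and a vector field. The transnormality condition $\|\nabla F\|^2 = b\circ F$ together with the chain rule gives
\[
\nabla \|\nabla F\| = \nabla\sqrt{b\circ F} = \frac{b'(F)}{2\sqrt{b(F)}}\,\nabla F,
\]
so, using $\nabla F = \sqrt{b(F)}\,\mathbf{N}$ and $\langle\mathbf{N},\mathbf{N}\rangle=1$,
\[
\bigl\langle \nabla\|\nabla F\|, \mathbf{N}\bigr\rangle = \frac{b'(F)}{2\sqrt{b(F)}}\,\sqrt{b(F)} = \frac{b'(F)}{2}.
\]

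Next I would identify $\operatorname{div}(\mathbf{N})$ with minus the trace of the shape operator. Pick, at a regular point of $F^{-1}(c)$, a local orthonormal frame $e_1,\dots,e_{n-1},e_n=\mathbf{N}$ adapted to the level set. Since $\mathbf{N}$ is a unit vector field, $\langle\nabla_X\mathbf{N},\mathbf{N}\rangle=0$ for every $X$, in particular the $i=n$ term in the divergence vanishes, and each $\nabla_{e_i}\mathbf{N}$ is tangent to the level set. With the sign convention $S(X) = -\nabla_X\mathbf{N}$ for $X$ tangent to $F^{-1}(c)$, one obtains
\[
\operatorname{div}(\mathbf{N}) \;=\; \sum_{i=1}^{n-1}\langle \nabla_{e_i}\mathbf{N}, e_i\rangle \;=\; -h.
\]

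Combining the two computations with $\Delta F = a(F)$ gives
\[
a(c) = \frac{b'(c)}{2} - \sqrt{b(c)}\,h \qquad\text{on } F^{-1}(c),
\]
and solving for $h$ yields the asserted formula $h=(-2a(c)+b'(c))/(2\sqrt{b(c)})$. There is no serious obstacle; the only thing requiring care is the sign convention for the shape operator and the observation that the $\mathbf{N}$-direction contributes nothing to $\operatorname{div}(\mathbf{N})$, so that $\operatorname{div}(\mathbf{N})$ coincides (up to sign) with the trace of $\nabla\mathbf{N}$ along the level set.
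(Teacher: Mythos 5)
Your proof is correct and is essentially the same computation as the paper's: both hinge on the observation that $\operatorname{div}(\mathbf N)$ picks up no contribution from the $\mathbf N$-direction (so it equals $-h$), both apply the Leibniz rule to the factorisation $\nabla F=\sqrt{b\circ F}\,\mathbf N$, and both use $\Delta F=a\circ F$ and $\|\nabla F\|^2=b\circ F$; the paper merely runs the calculation in the opposite direction, starting from $h$ and expanding rather than starting from $\Delta F$ and solving.
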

\begin{proof}
In an open neighborhood of any point $p\in F^{-1}(c)$, the vector field $\mathbf N$ can be extended to an orthonormal frame $E_1,\dots,E_{n-1}, E_n=\mathbf N$, where $n$ is the dimension of the manifold. Then using the equations
\[
\langle \nabla_{E_n}\mathbf N,\mathbf N\rangle=0;\qquad \nabla_{E_i}F=0\text{  for }1\leq i<n;\qquad \text{and}\qquad \nabla_{E_n(p)}F=\langle \nabla F(p), E_n(p)\rangle=\sqrt{b(c)},\]
we obtain
\begin{align*}
    h(p)&=\sum_{i=1}^{n-1}\langle -\nabla_{E_i(p)}\mathbf N, E_i(p)\rangle=\sum_{i=1}^{n}\langle -\nabla_{E_i(p)}\mathbf N, E_i(p)\rangle =\sum_{i=1}^{n}\left\langle -\nabla_{E_i(p)} \left(\frac{\nabla F}{\sqrt{b\circ F}}\right), E_i(p)\right\rangle \\
    &=-\frac{\Delta F(p)}{\sqrt{b(c)}}-\nabla_{E_n(p)} \left(\frac{1}{\sqrt{b\circ F}}\right)\langle \nabla F(p), E_n(p)\rangle=- \frac{ a(c)}{\sqrt{b(c)}}+\frac{b'(c)}{2\sqrt{b(c)}}\qedhere
\end{align*}
\end{proof}

\begin{definition}
    The singular level sets of an isoparametric function $F$ are called the focal varieties of $F$.
\end{definition}
There are some fundamental results of Q.~M.~Wang \cite{Wang} and  J.~Ge and Z.~Tang \cite{Ge_Tang}  on isoparametric functions.

\begin{theorem}[\cite{Wang},\cite{Ge_Tang}]\label{Wang}
For an isoparametric function $F$ on a connected and complete Riemannian manifold, 
\begin{enumerate}[label=\emph{(\roman*)}]
    \item only the minimal and maximal values of $F$ can be singular;
    \item the focal varieties of $F$ are smooth minimal submanifolds;
    \item the regular level sets of $F$ are tubes about either of the focal varieties, having  constant mean curvature.
\end{enumerate}
\end{theorem}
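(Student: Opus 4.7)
The plan is to translate the isoparametric conditions into a scalar ODE that governs the behavior of $F$ along its normalized gradient flow, and to combine this with completeness of $M$. Along an arc-length integral curve $\gamma$ of the unit normal field $\mathbf N = \nabla F / \sqrt{b \circ F}$, the value $c(s) := F(\gamma(s))$ satisfies the autonomous ODE $\dot c = \sqrt{b(c)}$. Integrating this ODE shows that the signed distance along $\mathbf N$ between two regular level sets $F^{-1}(c_1)$ and $F^{-1}(c_2)$ equals $\int_{c_1}^{c_2} dc'/\sqrt{b(c')}$, independent of the starting point, so regular level sets are mutually equidistant parallel hypersurfaces — this is the geometric core of the transnormality condition. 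A standard computation using $\|\nabla F\|^2 = b \circ F$ also shows these integral curves are geodesics, so the flow actually realizes the normal exponential map.

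To prove \emph{(i)}, I argue that $\sqrt{b(\cdot)}$ cannot vanish at an interior point of the interval $F(M)$. Suppose, for contradiction, that $c^*$ is singular with $\min F < c^* < \max F$, and pick regular values $c_1 < c^* < c_2$ in $F(M)$. Since $b \geq 0$ on $F(M)$ and $b(c^*) = 0$, continuity forces $b'(c^*) = 0$. The $\mathbf N$-orbit starting in $F^{-1}(c_1)$ has $c(s)$ strictly increasing yet bounded above by $c^*$, because $\sqrt{b}$ vanishes there and blocks the ODE; by completeness this orbit is defined for all $s \geq 0$ and approaches $F^{-1}(c^*)$ as $s \to +\infty$. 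Analogously, $(-\mathbf N)$-orbits from $F^{-1}(c_2)$ are trapped above $c^*$. Connectedness of $M$ forces a continuous path from a point of $F^{-1}(c_1)$ to a point of $F^{-1}(c_2)$, which must cross $F^{-1}(c^*)$; but the two gradient-flow invariants just described, together with the fact that nearby regular level sets are parallel hypersurfaces squeezed against each other by $\sqrt{b}\to 0$, prevent any such crossing, yielding the contradiction.

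For \emph{(ii)} and \emph{(iii)}, fix a singular value, say $c^* = \max F$, and set $\mathcal F = F^{-1}(c^*)$. Backward flow of $\mathbf N$ from any regular level $F^{-1}(c)$ places it at geodesic distance $r(c) = \int_c^{c^*} dc'/\sqrt{b(c')}$ from $\mathcal F$, so regular level sets foliate a neighborhood of $\mathcal F$ as parallel hypersurfaces collapsing to $\mathcal F$ as $c\to c^*$. Smoothness of $\mathcal F$ comes from a local analysis of $F$ near its critical set: in a normal exponential chart the collapse is governed by the order of vanishing of $b$ at $c^*$, and this uniform collapse of smooth parallel hypersurfaces forces $\mathcal F$ to be a smooth submanifold of locally constant dimension. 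Once $\mathcal F$ is smooth, regular level sets are tubes about it by the very construction. Minimality of $\mathcal F$ then follows by comparing the tube mean curvature formula from Proposition~\ref{prop:tube_mean_curvature}, namely $h(c) = (b'(c) - 2a(c))/(2\sqrt{b(c)})$, with the standard Riemannian asymptotic expansion of the mean curvature of a tube of radius $r$ about a submanifold; letting $r\to 0^+$ and matching leading pole and constant terms forces the mean curvature of $\mathcal F$ itself to vanish identically.

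The main obstacle is the smoothness statement in \emph{(ii)}. The ODE/gradient-flow argument yields only a continuous tube-like decomposition of a neighborhood of $\mathcal F$, and upgrading this to smoothness of $\mathcal F$ as a submanifold requires genuine use of the full isoparametric structure — typically a careful analysis of higher normal derivatives of $F$ matched to the vanishing order of $b$ at $c^*$ — and is the technically most demanding part of the argument.
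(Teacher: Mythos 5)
This theorem is cited in the paper from Wang and Ge--Tang without proof, so there is no in-paper argument to compare against; what follows is an assessment of your sketch on its own terms.

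Your argument for \emph{(i)} has the right mechanism but the concluding step is not a proof. You correctly observe that $b\geq 0$ and $b(c^*)=0$ at an interior value force $b'(c^*)=0$, so $b$ vanishes at least quadratically and the gradient flow never reaches level $c^*$ in finite time. But you then appeal vaguely to ``gradient-flow invariants'' to rule out \emph{any} continuous path crossing $F^{-1}(c^*)$, and that does not follow from statements about gradient flow lines alone: arbitrary paths are not integral curves of $\mathbf N$. The correct completion is a quantitative one: set $\Phi(c)=\int_{c_1}^{c}\mathrm{d}x/\sqrt{b(x)}$ on the interval of regular values below $c^*$. Along any arc-length curve $\sigma$ with $f=F\circ\sigma$ one has $|f'|\leq\|\nabla F\|=\sqrt{b\circ f}$, hence $|\tfrac{\mathrm{d}}{\mathrm{d}s}\Phi(f(s))|\leq 1$, so $\Phi\circ f$ is $1$-Lipschitz. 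On the other hand $\Phi(c)\to+\infty$ as $c\uparrow c^*$ because of the at-least-quadratic vanishing of $b$. A finite-length path from $F^{-1}(c_1)$ to a point of $F^{-1}(c^*)$ would therefore make the $1$-Lipschitz function $\Phi\circ f$ unbounded, which is impossible. Connectedness of $M$ guarantees such a path exists if $c^*$ is attained, giving the contradiction. This length lower bound along \emph{arbitrary} curves is the missing ingredient; the gradient-flow picture by itself only shows that one special family of curves does not cross.

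The genuine gap, which you acknowledge, is the smoothness claim in \emph{(ii)}. Asserting that ``uniform collapse of smooth parallel hypersurfaces forces $\mathcal F$ to be a smooth submanifold of locally constant dimension'' is exactly what needs proof, and it is by far the hardest part of Wang's theorem; nothing in your sketch supplies a mechanism for it. Your idea for minimality (matching the singular part of the tube mean curvature $h(c)=(b'(c)-2a(c))/(2\sqrt{b(c)})$ against the $r\to 0^+$ asymptotics of the mean curvature of a geodesic tube, and extracting the mean curvature of the core in the constant term) is a legitimate and rather clean route once smoothness of $\mathcal F$ is in hand, and \emph{(iii)} is then essentially immediate from the transnormal flow. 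So the overall architecture is sound, \emph{(i)} needs the Lipschitz/divergent-integral argument to be stated correctly, and \emph{(ii)} has a real hole that cannot be closed without a substantially more delicate local analysis.
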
	

The following proposition is useful if we want to compute the radius of the tubes appearing in case (iii) of Theorem \ref{Wang}.

\begin{proposition}[\cite{Wang}]\label{prop:tube_radius}
 Let $F$ be an isoparametric function which attains its minimal value $c_0$, and let $c>c_0$ be an arbitrary regular value of $F$. Then the level set $F^{-1}(c)$ is a tube of radius $r(c)$ about the focal variety $F^{-1}(c_0)$, where the radius $r(c)$ is given by the converging improper integral
\[
r(c)=\int_{c_0}^c \frac{\id x}{\sqrt{b(x)}}.
\]
\end{proposition}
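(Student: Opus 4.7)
The plan is to invoke Theorem \ref{Wang}(iii), which asserts that $F^{-1}(c)$ is a tube of some radius $r(c)$ about the focal variety $F^{-1}(c_0)$, and then to compute this radius by integrating along a distance-minimizing geodesic from the focal variety to $F^{-1}(c)$. The core observation is that the function $F$ satisfies a separable ODE along such a geodesic, whose right-hand side is determined by $b$.

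I would begin by fixing a point $q\in F^{-1}(c)$ and a unit-speed minimizing geodesic $\alpha\colon[0,r(c)]\to M$ with $\alpha(0)\in F^{-1}(c_0)$, $\alpha(r(c))=q$, meeting the focal variety orthogonally at $\alpha(0)$. By Theorem \ref{Wang}(i), every value of $F$ strictly between $c_0$ and $c$ is regular, so $\nabla F$ is nonvanishing along $\alpha|_{(0,r(c)]}$ and the unit normal $\mathbf N=\nabla F/\sqrt{b\circ F}$ is defined there. Since by Theorem \ref{Wang}(iii) the regular level sets of $F$ are precisely the tubes about the focal variety, the velocity $\alpha'(s)$ remains orthogonal to the level set through $\alpha(s)$ for every $s\in(0,r(c)]$, hence $\alpha'(s)=\pm\mathbf N(\alpha(s))$. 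Strict monotonicity of $F\circ\alpha$, which follows from $F$ being constant on level sets together with $F(\alpha(0))=c_0<c=F(\alpha(r(c)))$, forces the sign to be $+$.

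Differentiating $F\circ\alpha$ then yields
\[
(F\circ\alpha)'(s)=\langle\nabla F,\alpha'(s)\rangle=\langle\nabla F,\mathbf N\rangle=\|\nabla F\|=\sqrt{b(F(\alpha(s)))},
\]
the separable ODE $y'=\sqrt{b(y)}$. Integrating from $s=\epsilon>0$ to $s=r(c)$ gives
\[
r(c)-\epsilon=\int_{F(\alpha(\epsilon))}^{c}\frac{\id x}{\sqrt{b(x)}},
\]
and letting $\epsilon\to 0^+$ establishes both the asserted identity and the convergence of the improper integral (the integrand diverges at $c_0$ because $b(c_0)=\|\nabla F\|^2|_{F^{-1}(c_0)}=0$). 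The one delicate point, which I expect to be the main subtlety rather than a serious obstacle, is that $\mathbf N$ is not defined on the focal variety, so the identification $\alpha'=\mathbf N\circ\alpha$ fails at $s=0$; this is exactly why the integral is improper and why the $\epsilon$-truncation above is needed.
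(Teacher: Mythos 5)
The paper does not prove this proposition; it cites it directly from Wang \cite{Wang}, so there is no internal argument to compare with. On its own terms your reconstruction is correct and follows the expected route. The one step stated a bit too quickly is the identification $\alpha'(s)=\pm\mathbf N(\alpha(s))$: Theorem \ref{Wang}(iii) tells you that the regular level sets are tubes, but by itself that does not give you the orthogonality of $\alpha'(s)$ to the level set through $\alpha(s)$. You also need the separate, standard Riemannian fact that a unit-speed minimizing geodesic $\alpha$ from the submanifold $\mathcal F=F^{-1}(c_0)$ to a point at distance $r$ satisfies $d(\alpha(s),\mathcal F)=s$ for all $s$ (by the triangle inequality) and is an integral curve of $\nabla\, d(\cdot,\mathcal F)$ wherever that function is smooth, so that $\alpha'(s)$ is orthogonal to the tube of radius $s$, which by Wang (iii) equals the $F$-level set through $\alpha(s)$. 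Once that is in place, the sign determination is forced (if the sign flipped at some $s_0$, then $(F\circ\alpha)'(s_0)=0$ would require $b$ to vanish at a regular value of $F$, which is impossible), the separable ODE $(F\circ\alpha)'=\sqrt{b\circ F\circ\alpha}$ gives the truncated identity, and the monotone limit $\epsilon\to 0^+$ establishes both the formula and the convergence of the improper integral at the endpoint $c_0$, where $b$ vanishes because $F^{-1}(c_0)$ is critical.
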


\section{Damek--Ricci spaces\label{sec:Damek-Ricci}}
	
 Damek--Ricci spaces are solvable Lie groups equipped with a left-invariant Riemannian metric. To construct a Damek--Ricci space, we have to fix 
 \begin{itemize}
     \item a Euclidean linear space $(\mathfrak s,\langle\,,\rangle)$ with an orthogonal decomposition 
 $\mathfrak s=\mathfrak v\obot \mathfrak z\obot \mathfrak a$, where $\mathfrak a$ is a 1-dimensional subspace spanned by a given unit vector $A\in \mathfrak a$;

 \item a representation $J\colon \mathrm{Cl}(\mathfrak z,q)\to \mathrm{End}(\mathfrak v)$ of the Clifford algebra $\mathrm{Cl}(\mathfrak z,q)$ of the quadratic form  $q\colon \mathfrak z\to \mathbb R$, $q(Z)=-\langle Z,Z\rangle$ such that 
	\begin{equation}\label{eq:J_ZV_hossz}
 	\|J_ZV\|=\|Z\| \|V\| \qquad \forall\, Z\in \mathfrak z,\, V\in \mathfrak v.
	\end{equation}

 \end{itemize}
Equation \eqref{eq:J_ZV_hossz} implies also the identities 
\[\langle J_Z V_1,J_Z V_2\rangle=\|Z\|^2\langle V_1,V_2\rangle\quad\text{and}\quad \langle J_ZV_1,V_2\rangle =-\langle V_1,J_ZV_2\rangle \qquad \forall\, Z\in \mathfrak z,\, V_1,V_2\in \mathfrak v.
\]
  
We can equip the linear space $\mathfrak n=\mathfrak v\obot \mathfrak z$ with a Lie algebra structure such that $[\mathfrak n,\mathfrak z]=\{0\}$ and  $[\mathfrak v,\mathfrak v]\subseteq \mathfrak z$, defining the Lie bracket of $U,V\in \mathfrak v$ by
	\begin{equation} \label{eq:ad_J}
	\langle [U,V],Z\rangle = \langle J_ZU,V\rangle\qquad \forall\, Z\in \mathfrak z.
	\end{equation}
If $\mathfrak v=\{0\}$ or $\mathfrak z=\{0\}$, then $\mathfrak n$ is commutative, otherwise  $\mathfrak n$ is a $2$-step nilpotent Lie algebra with center $\mathfrak z$. 

Equation \eqref{eq:ad_J} implies immediately that $\ker_{\mathfrak v}(\ad U) = (J_{\mathfrak z}U)^{\perp}\cap\mathfrak v$, where $\ker_{\mathfrak v}(\ad U)$ abbreviates the intersection $\ker(\ad U)\cap\mathfrak v$. Hence, $\mathfrak v$ has an orthogonal direct sum decomposition
\begin{equation}\label{eq:J_zv_perp}
\mathfrak v=\ker_{\mathfrak v}(\ad U) \obot J_{\mathfrak z}U =\mathbb RU\obot\left(\ker_{\mathfrak v}(\ad U)\cap U^{\perp}\right)\obot J_{\mathfrak z}U
\end{equation}
for any $U\in \mathfrak v$. 

	We can introduce a solvable Lie algebra structure on $\mathfrak s$ with the Lie bracket
	\[
	[V+Z+sA,U+X+tA]=\left(\frac{s}2U-\frac{t}2V \right)+([U,V]+sX-tZ).
	\]
	
	The simply connected, connected Lie group $S$ with Lie algebra $\mathfrak s$, equipped with the left invariant Riemannian metric induced by $\langle\,,\rangle$ is a Damek--Ricci space. We shall denote the normal Lie subgroup of $S$ corresponding to the Lie algebra $\mathfrak n$ by $N\triangleleft S$. 
 
There is a classification of Damek--Ricci spaces (i.e., a classification of the possible input data for the construction of a Damek--Ricci space). Every Damek--Ricci space is harmonic. The Damek--Ricci spaces corresponding to the degenerate cases $\dim \mathfrak v=0$ or $\dim \mathfrak z=0$ are isometric with a real hyperbolic space $\mathbb R\mathbf H^{n}$. The further rank one symmetric spaces  $\mathbb C \mathbf H^n$, $\mathbb H \mathbf H^n$, and $\mathbb O \mathbf H^2$ are also among the Damek--Ricci spaces, with $\dim \mathfrak z=1,3,7$, respectively, but none of the other Damek--Ricci spaces are symmetric. See \cite[Sections 3.1.2, 4.1.2, 4.4]{DamekRicci} for details.

We collect some useful formulae in $\mathfrak s$. Denote by $n$ and $m$ the dimension of $\mathfrak v$ and $\mathfrak z$, respectively. Let $E_1,\dots,E_n$ be an orthonormal basis of $\mathfrak v$, $F_1,\dots,F_m$ be an orthonormal basis of $\mathfrak z$, and $A\in \mathfrak a$ be the unit vector introduced above. The Lie algebra structure on $\mathfrak n$ is given by the structure constants $C_{i,j,\alpha}$ $(1\leq i,j\leq n$, $1\leq \alpha\leq m$) appearing in the decomposition $[E_i,E_j]=\sum_{\alpha=1}^mC_{i,j,\alpha} F_{\alpha}$. When there is no danger of confusion, we write $C_{ij\alpha}$ instead of $C_{i,j,\alpha}$.

\begin{lemma}
Setting $J_{\alpha}=J_{F_{\alpha}}$, we have $J_\alpha(E_i)=\sum_{j=1}^n C_{ij\alpha}E_j$.
\end{lemma}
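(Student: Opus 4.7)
The plan is to read off the coefficients of $J_\alpha E_i$ in the orthonormal basis $E_1,\dots,E_n$ by using the defining identity \eqref{eq:ad_J} of the Lie bracket on $\mathfrak n$. Since $J_\alpha$ maps $\mathfrak v$ into $\mathfrak v$, one has the expansion
\[
J_\alpha E_i = \sum_{j=1}^n \langle J_\alpha E_i, E_j\rangle \, E_j,
\]
so the lemma is equivalent to the identity $\langle J_\alpha E_i, E_j\rangle = C_{ij\alpha}$.

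To establish this, I would apply \eqref{eq:ad_J} with $U=E_i$, $V=E_j$, $Z=F_\alpha$, obtaining
\[
\langle J_\alpha E_i, E_j\rangle \;=\; \langle J_{F_\alpha} E_i, E_j\rangle \;=\; \langle [E_i,E_j], F_\alpha\rangle.
\]
By the definition of the structure constants, $[E_i,E_j]=\sum_{\beta=1}^m C_{ij\beta} F_\beta$, and orthonormality of $F_1,\dots,F_m$ then gives $\langle [E_i,E_j],F_\alpha\rangle = C_{ij\alpha}$. Substituting back into the expansion of $J_\alpha E_i$ yields the claim. There is really no obstacle here: the statement is a direct unpacking of the definitions \eqref{eq:ad_J} and of the structure constants, and the only thing to keep in mind is that $J_\alpha$ preserves $\mathfrak v$ so that the expansion in the basis $\{E_j\}$ is legitimate.
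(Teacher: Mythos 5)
Your proof is correct and follows exactly the paper's line of reasoning: expand $J_\alpha E_i$ in the orthonormal basis $\{E_j\}$ and compute the coefficients $\langle J_\alpha E_i, E_j\rangle = \langle [E_i,E_j], F_\alpha\rangle = C_{ij\alpha}$ via the defining identity \eqref{eq:ad_J}. The paper states this same one-line argument without spelling out the basis expansion.
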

\begin{proof}
The formula follows from $\langle J_\alpha(E_i),E_k\rangle =\langle[E_i,E_k],F_{\alpha}\rangle$.
\end{proof}

\begin{lemma}\label{lem:C_ijalfa}  The structure constants satisfy the identities
	\[
	C_{ij\alpha}=-C_{ji\alpha},\qquad \sum_{k=1}^n C_{ik\alpha}C_{kj\alpha}=-\delta_{ij} \quad \forall\, i,j,\alpha.
	\]
\end{lemma}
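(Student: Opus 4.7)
The plan is to treat the two identities separately, both by a short direct calculation using the orthonormal bases $E_1,\dots,E_n$ of $\mathfrak v$ and $F_1,\dots,F_m$ of $\mathfrak z$.

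For the first identity, I would simply expand $[E_i,E_j] = -[E_j,E_i]$ in the basis $F_1,\dots,F_m$:
\[
\sum_{\alpha} C_{ij\alpha}F_\alpha = [E_i,E_j] = -[E_j,E_i] = -\sum_{\alpha} C_{ji\alpha}F_\alpha,
\]
and read off the coefficient of each $F_\alpha$ to get $C_{ij\alpha} = -C_{ji\alpha}$.

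The second identity is the substantive part, and the key input is the Clifford-type relation $J_\alpha^2 = -\mathrm{id}_{\mathfrak v}$. This follows immediately from the two consequences of \eqref{eq:J_ZV_hossz} recorded in the text: the skew-symmetry $\langle J_Z V_1, V_2\rangle = -\langle V_1, J_Z V_2\rangle$ shows $J_\alpha^{\top} = -J_\alpha$, while $\|J_\alpha V\|^2 = \|F_\alpha\|^2\|V\|^2 = \|V\|^2$ gives $J_\alpha^{\top} J_\alpha = \mathrm{id}_{\mathfrak v}$; combining these yields $J_\alpha^2 = -\mathrm{id}_{\mathfrak v}$. (Equivalently, this is nothing but the defining Clifford relation applied to the unit vector $F_\alpha$ under the form $q(Z) = -\|Z\|^2$.) Having this, I would apply the preceding lemma's formula $J_\alpha(E_i) = \sum_j C_{ij\alpha}E_j$ twice:
\[
-E_i = J_\alpha^2(E_i) = J_\alpha\Bigl(\sum_k C_{ik\alpha}E_k\Bigr) = \sum_{j}\Bigl(\sum_k C_{ik\alpha}C_{kj\alpha}\Bigr) E_j,
\]
and then compare coefficients of $E_j$ to conclude $\sum_k C_{ik\alpha}C_{kj\alpha} = -\delta_{ij}$.

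There is no real obstacle here; the only step that requires more than symbol pushing is the assertion $J_\alpha^2 = -\mathrm{id}_{\mathfrak v}$, and that is already essentially embedded in the structural remarks made immediately after \eqref{eq:J_ZV_hossz}. So the entire proof amounts to two coefficient comparisons bridged by one observation about $J_\alpha$.
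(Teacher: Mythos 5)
Your proof is correct and follows essentially the same route as the paper: the first identity from skew-symmetry of the Lie bracket, and the second by evaluating the Clifford identity $J_Z^2=-\|Z\|^2\mathrm{Id}_{\mathfrak v}$ at $Z=F_\alpha$ and applying the formula $J_\alpha(E_i)=\sum_j C_{ij\alpha}E_j$ from the preceding lemma. You simply spell out the coefficient comparisons that the paper leaves implicit, which is fine.
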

\begin{proof}
	The first equation follows from the skew-symmetry of the Lie bracket, the second identity can be obtained by evaluating  the identity $J_Z^2=-\|Z\|^2 \mathrm{Id}_{\mathfrak v}$ on the basis vectors $Z=F_{\alpha}$.
\end{proof}

\begin{lemma}[{\cite[p.~25]{DamekRicci}}]\label{lem:J_XUV}
    For any $U,V\in \mathfrak v$ and for any $X\in \mathfrak z$, we have
\[
[J_XU,V]-[U,J_XV]=-2\langle U,V\rangle X.
\]
\end{lemma}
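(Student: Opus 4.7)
The plan is to test the identity by pairing both sides with an arbitrary $Z\in\mathfrak z$. Since $[\mathfrak v,\mathfrak v]\subseteq\mathfrak z$, the left-hand side lies in $\mathfrak z$, and so does the right-hand side, so it suffices to prove the equality after taking $\langle\,\cdot\,,Z\rangle$ for every $Z\in\mathfrak z$.

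First I would apply the defining relation \eqref{eq:ad_J} to each bracket term to convert the statement into an identity purely on $\mathfrak v$:
\[
\langle[J_XU,V],Z\rangle=\langle J_Z J_XU,V\rangle,\qquad \langle[U,J_XV],Z\rangle=\langle J_ZU,J_XV\rangle.
\]
Next, using the skew-adjointness relation $\langle J_XV_1,V_2\rangle=-\langle V_1,J_XV_2\rangle$ recorded just after \eqref{eq:J_ZV_hossz}, the second term rewrites as $\langle J_Z U,J_X V\rangle=-\langle J_X J_Z U,V\rangle$, so the left-hand side of the desired identity pairs with $Z$ to give
\[
\bigl\langle (J_ZJ_X+J_XJ_Z)U,V\bigr\rangle.
\]

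The remaining step is to invoke the Clifford relation. The representation $J\colon\mathrm{Cl}(\mathfrak z,q)\to\mathrm{End}(\mathfrak v)$ with $q(Z)=-\langle Z,Z\rangle$ gives $J_Z^2=-\|Z\|^2\mathrm{Id}_{\mathfrak v}$; polarizing in $Z$ (i.e.\ replacing $Z$ by $X+Z$ and expanding) yields the anticommutation
\[
J_XJ_Z+J_ZJ_X=-2\langle X,Z\rangle\,\mathrm{Id}_{\mathfrak v}.
\]
Substituting this into the previous display produces $-2\langle X,Z\rangle\langle U,V\rangle$, which matches $\langle -2\langle U,V\rangle X,Z\rangle$. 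Since $Z\in\mathfrak z$ was arbitrary, the lemma follows.

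I do not anticipate any serious obstacle: the only non-trivial ingredient is the polarized Clifford relation, which is a direct consequence of the hypothesis \eqref{eq:J_ZV_hossz} (equivalently, of $J_Z^2=-\|Z\|^2\mathrm{Id}$ stated in Lemma \ref{lem:C_ijalfa}). The proof is essentially a three-line manipulation: apply \eqref{eq:ad_J}, use skew-adjointness of $J_X$, and apply the Clifford anticommutator.
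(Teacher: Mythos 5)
Your proof is correct and follows essentially the same route as the paper: pair with an arbitrary element of $\mathfrak z$, rewrite the brackets via \eqref{eq:ad_J}, use skew-adjointness of $J_X$, and finish with the polarized Clifford anticommutator $J_XJ_Y+J_YJ_X=-2\langle X,Y\rangle\mathrm{Id}_{\mathfrak v}$. The only cosmetic difference is that the paper writes the arbitrary $\mathfrak z$-element as $Y$ rather than $Z$.
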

\begin{proof} Polarizing the identity $J_Z^2=-\|Z\|^2\mathrm{Id}_{\mathfrak v}$, we obtain 
\begin{equation}\label{eq:Clifford1}
J_X J_Y+J_Y J_X = -2\langle X,Y\rangle \mathrm{Id}_{\mathfrak v}
\end{equation}	
If $Y\in \mathfrak z$ is an arbitrary element, then this gives
\[
\langle [J_XU,V]-[U,J_XV],Y\rangle=\langle J_YJ_XU,V\rangle-\langle J_YU,J_XV\rangle=\langle J_YJ_XU,V\rangle+\langle J_XJ_YU,V\rangle=-2\langle U,V\rangle \langle X,Y\rangle 
\]
and this implies the statement.
\end{proof}

Equation \eqref{eq:Clifford1} provides also the identity
 \begin{equation}\label{eq:Clifford2}
\langle J_XV,J_YV\rangle = -\left\langle \tfrac{1}{2}(J_YJ_X+J_XJ_Y)V,V\right\rangle=\langle X,Y\rangle \|V\|^2.
\end{equation}
 
\begin{lemma} For $V\in \mathfrak v$, let $P_V\colon\mathfrak v\to J_{\mathfrak z}V$ be the orthogonal projection onto $J_{\mathfrak z}V$. Then for any $V,V_1,V_2\in \mathfrak v$, we have 
\[\langle [V,V_1], [V,V_2]\rangle=\|V\|^2\langle P_V(V_1),P_V(V_2)\rangle.\]
\end{lemma}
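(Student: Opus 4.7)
The plan is to use the orthogonal decomposition \eqref{eq:J_zv_perp} of $\mathfrak v$ to reduce the bracket $[V,V_i]$ to its action on the $J_{\mathfrak z}V$-component $P_V(V_i)$, and then evaluate everything by means of the Clifford identity \eqref{eq:Clifford2}.

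First I would dispose of the trivial case $V=0$, in which both sides are zero, and assume $V\neq 0$. Since $\ker_{\mathfrak v}(\ad V)=(J_{\mathfrak z}V)^{\perp}\cap\mathfrak v$, the decomposition \eqref{eq:J_zv_perp} lets me write $V_i=W_i+P_V(V_i)$ with $W_i\in \ker_{\mathfrak v}(\ad V)$, so $[V,V_i]=[V,P_V(V_i)]$. Because the map $\mathfrak z\to J_{\mathfrak z}V$, $Z\mapsto J_Z V$, is surjective (and, by \eqref{eq:J_ZV_hossz}, injective when $V\neq 0$), we may choose $Z_1,Z_2\in\mathfrak z$ uniquely so that $P_V(V_i)=J_{Z_i}V$.

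Next I would compute $[V,J_ZV]\in\mathfrak z$ for an arbitrary $Z\in\mathfrak z$. For every $Y\in\mathfrak z$, the defining equation \eqref{eq:ad_J} and the identity \eqref{eq:Clifford2} give
\[
\langle [V,J_ZV],Y\rangle=\langle J_YV,J_ZV\rangle=\langle Y,Z\rangle\,\|V\|^2,
\]
hence $[V,J_ZV]=\|V\|^2 Z$. Applying this with $Z=Z_i$ yields $[V,V_i]=\|V\|^2 Z_i$, and therefore
\[
\langle [V,V_1],[V,V_2]\rangle=\|V\|^4\,\langle Z_1,Z_2\rangle.
\]

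Finally, a second use of \eqref{eq:Clifford2} gives $\langle P_V(V_1),P_V(V_2)\rangle=\langle J_{Z_1}V,J_{Z_2}V\rangle=\|V\|^2\langle Z_1,Z_2\rangle$, so the right-hand side of the claimed identity equals $\|V\|^4\langle Z_1,Z_2\rangle$ as well, matching the left-hand side. No step looks delicate; the only thing to be careful about is the direction of the projection and the scaling factor $\|V\|^2$ picked up when passing between $Z\in\mathfrak z$ and $J_ZV\in J_{\mathfrak z}V$, which accounts for the factor $\|V\|^2$ in the statement.
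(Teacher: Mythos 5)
Your proof is correct, but it takes a genuinely different route from the paper's. The paper expands $\langle [V,V_1],[V,V_2]\rangle$ over the orthonormal basis $\{F_\alpha\}$ of $\mathfrak z$, rewrites each coefficient $\langle F_\alpha,[V,V_i]\rangle$ as $\langle J_\alpha V,V_i\rangle$ via \eqref{eq:ad_J}, and then recognizes the resulting sum as $\|V\|^2\langle P_V(V_1),P_V(V_2)\rangle$ because $\bigl\{J_\alpha V/\|V\|\bigr\}$ is an orthonormal basis of $J_{\mathfrak z}V$ by \eqref{eq:Clifford2}. You instead work basis-free: you use the decomposition \eqref{eq:J_zv_perp} to kill the $\ker_{\mathfrak v}(\ad V)$ part of $V_i$, write $P_V(V_i)=J_{Z_i}V$ via the isomorphism $\mathfrak z\to J_{\mathfrak z}V$, and isolate the clean structural identity $[V,J_ZV]=\|V\|^2 Z$, after which both sides collapse to $\|V\|^4\langle Z_1,Z_2\rangle$. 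Your intermediate identity $[V,J_ZV]=\|V\|^2 Z$ is a coordinate-free restatement of the paper's orthonormal-basis observation and is essentially dual to Corollary \ref{cor:P_V(V_1)} (which the paper instead derives as a consequence of this lemma). The two proofs are of comparable length; yours isolates a reusable identity and avoids choosing a basis of $\mathfrak z$, while the paper's is a more compact direct expansion.
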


\begin{proof} The statement is true for $V=0$. If $V\neq 0$, then $\left\{\frac{1}{\|V\|}J_{\alpha}V : 1\leq \alpha\leq m\right\}$ is an orthonormal basis of $J_{\mathfrak z}V$ by \eqref{eq:Clifford2}, and
\[\langle [V,V_1], [V,V_2]\rangle
=\sum_{\alpha=1}^m\langle F_{\alpha},[V,V_1]\rangle\cdot\langle F_{\alpha}, [V,V_2]\rangle
=\sum_{\alpha=1}^m\langle J_{\alpha}V,V_1\rangle\cdot \langle J_{\alpha}V,V_2\rangle
=\|V\|^2\langle P_V(V_1),P_V(V_2)\rangle.\qedhere\]	
\end{proof}
Rewriting the obtained identity in the form
\[
\langle J_{[V,V_1]}V,V_2\rangle =
\langle [V,V_1], [V,V_2]\rangle=\|V\|^2\langle P_V(V_1),P_V(V_2)\rangle=\langle \|V\|^2 P_V(V_1),V_2\rangle,
\]
we get the following corollary.
\begin{corollary}[{\cite[Eq.~(1.8)]{CoDoKoRi}}]\label{cor:P_V(V_1)}
For any $V,V_1\in \mathfrak v$, we have 
\[J_{[V,V_1]}V=\|V\|^2P_V(V_1).\]
\end{corollary}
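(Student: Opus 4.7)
The plan is to read off the corollary as an essentially immediate consequence of the chain of equalities displayed just before its statement. First I would unpack the leftmost equality using the defining relation \eqref{eq:ad_J} of the bracket: applying $\langle [U,W],Z\rangle = \langle J_Z U,W\rangle$ with $U=V$, $W=V_2$, and $Z=[V,V_1]\in\mathfrak z$ rewrites $\langle [V,V_1],[V,V_2]\rangle$ as $\langle J_{[V,V_1]}V, V_2\rangle$. The middle equality is the content of the preceding lemma, so it may be invoked directly. For the rightmost equality, I would note that $P_V(V_1)$ already lies in $J_{\mathfrak z}V$, while $V_2 - P_V(V_2)$ lies in the orthogonal complement of $J_{\mathfrak z}V$ in $\mathfrak v$; hence $\langle P_V(V_1), V_2 - P_V(V_2)\rangle = 0$, which gives $\langle P_V(V_1), P_V(V_2)\rangle = \langle P_V(V_1), V_2\rangle$. (Equivalently, use that the orthogonal projection $P_V$ is self-adjoint and idempotent.)

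Combining these three steps produces the identity $\langle J_{[V,V_1]}V, V_2\rangle = \langle \|V\|^2 P_V(V_1), V_2\rangle$ valid for every $V_2\in\mathfrak v$. Both $J_{[V,V_1]}V$ and $\|V\|^2 P_V(V_1)$ are elements of $\mathfrak v$ (the former because $J$ maps $\mathfrak z$ into $\mathrm{End}(\mathfrak v)$, the latter because $J_{\mathfrak z}V\subseteq \mathfrak v$), and the restriction of the inner product to $\mathfrak v$ is non-degenerate, so equality of the pairings against all $V_2\in\mathfrak v$ forces equality of the vectors themselves. There is no genuine obstacle in this argument; the only subtlety worth flagging explicitly is that the replacement of $P_V(V_2)$ by $V_2$ in the second slot is legitimate precisely because the first argument already lies in the image of $P_V$.
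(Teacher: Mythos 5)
Your proposal is correct and follows exactly the paper's route: the paper derives the corollary from the displayed chain $\langle J_{[V,V_1]}V,V_2\rangle = \langle [V,V_1],[V,V_2]\rangle = \|V\|^2\langle P_V(V_1),P_V(V_2)\rangle = \langle \|V\|^2 P_V(V_1),V_2\rangle$ and then invokes non-degeneracy, which is precisely what you unpack step by step.
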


\begin{lemma}\label{lem:scalar_bracket} We have also the identity
 \begin{equation*}
 \sum_{i=1}^n\langle[E_i,V],[E_i,W]\rangle=m \langle V,W\rangle\quad \forall\, V,W\in\mathfrak v.
 \end{equation*}
\end{lemma}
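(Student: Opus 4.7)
The plan is to expand the bracket $[E_i,V]$ in the orthonormal basis $F_1,\dots,F_m$ of $\mathfrak z$, use the defining relation \eqref{eq:ad_J} to convert the resulting coefficients into inner products involving the operators $J_\alpha$, and then collapse the sum over $i$ by the completeness of the basis $E_1,\dots,E_n$.

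Concretely, for each $i$ we write
\[
[E_i,V]=\sum_{\alpha=1}^m\langle[E_i,V],F_\alpha\rangle F_\alpha=\sum_{\alpha=1}^m\langle J_\alpha E_i,V\rangle F_\alpha=-\sum_{\alpha=1}^m\langle E_i,J_\alpha V\rangle F_\alpha,
\]
using the skew-adjointness $\langle J_ZV_1,V_2\rangle=-\langle V_1,J_ZV_2\rangle$ noted just after \eqref{eq:J_ZV_hossz}. An analogous expression holds for $[E_i,W]$, and since $F_1,\dots,F_m$ is orthonormal,
\[
\langle[E_i,V],[E_i,W]\rangle=\sum_{\alpha=1}^m\langle E_i,J_\alpha V\rangle\langle E_i,J_\alpha W\rangle.
\]
Summing over $i$ and invoking Parseval's identity with respect to the orthonormal basis $E_1,\dots,E_n$ of $\mathfrak v$ (note that $J_\alpha V,J_\alpha W\in\mathfrak v$) yields
\[
\sum_{i=1}^n\langle[E_i,V],[E_i,W]\rangle=\sum_{\alpha=1}^m\langle J_\alpha V,J_\alpha W\rangle.
\]

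Finally, the identity $\langle J_ZV_1,J_ZV_2\rangle=\|Z\|^2\langle V_1,V_2\rangle$ stated immediately after \eqref{eq:J_ZV_hossz} (equivalently \eqref{eq:Clifford2} with $X=Y=F_\alpha$) gives $\langle J_\alpha V,J_\alpha W\rangle=\langle V,W\rangle$ for every $\alpha$, so the last sum equals $m\langle V,W\rangle$, proving the claim. No step looks genuinely obstructive; the only minor subtlety is keeping careful track of where the skew-adjointness sign goes, but since the sign appears symmetrically in the factors for $V$ and for $W$ it cancels.
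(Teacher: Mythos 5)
Your proof is correct and follows essentially the same computation as the paper's, just organized in a slightly more coordinate-free way: instead of expanding $V$ and $W$ in the basis $E_j$ and invoking the structure-constant identity $\sum_i C_{ik\alpha}C_{kj\alpha}=-\delta_{ij}$ of Lemma~\ref{lem:C_ijalfa}, you expand $[E_i,V]$ in the $F_\alpha$ basis and use $\langle J_\alpha V,J_\alpha W\rangle=\langle V,W\rangle$, which is the invariant form of that same identity. Both arguments reduce to the Clifford relation $J_Z^2=-\|Z\|^2\,\mathrm{Id}_{\mathfrak v}$, so this is a stylistic rather than a substantive difference.
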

\begin{proof}
Write $V$ and $W$ as a linear combination $V=\sum_{j=1}^nV_jE_j$ and $W=\sum_{k=1}^nW_kE_k$. Then  
 \begin{align*}
 \sum_{i=1}^n\langle[E_i,V],[E_i,W]\rangle&= \sum_{i,j,k=1}^n\langle[E_i,E_j],[E_i,E_k]\rangle V_jW_k= \sum_{i,j,k=1}^n\sum_{\alpha=1}^m C_{ij\alpha}C_{ik\alpha} V_jW_k \\&= \sum_{\alpha=1}^m\sum_{j,k=1}^n \left(\sum_{i=1}^n -C_{ji\alpha}C_{ik\alpha}\right) V_jW_k=\sum_{\alpha=1}^m\sum_{j,k=1}^n \delta_{jk} V_jW_k
 =m \langle V,W\rangle.\qedhere
 \end{align*}
\end{proof}

\section{\texorpdfstring{The $J^2$-condition\label{sec:J^2}}{The J2-condition}}
The $J^2$-condition for generalized Heisenberg type groups was introduced by M.~Cowling, A.~H.~Dooley, \'A.~Kor\'anyi, and F.~Ricci \cite{CoDoKoRi}. Their definition can be adapted to  Damek--Ricci spaces. 
\begin{definition}\label{def:J^2_local}
In a Damek--Ricci space, we say that a vector $v\in\mathfrak v$ satisfies the $J^2$-condition if for any $z_1,z_2 \in \mathfrak z$, $z_1\perp z_2$, there exists an element $z_3\in \mathfrak z$ such that $J_{z_1}J_{z_2}v=J_{z_3}v$.  
\end{definition}

\begin{definition}\label{def:J^2_global}
A Damek--Ricci space satisfies the $J^2$-condition if every vector in $\mathfrak v$ satisfies the $J^2$-condition. 
\end{definition}

\begin{lemma}\label{lem:J^2_char}
    A vector $v\in \mathfrak v$ satisfies the $J^2$-condition if and only if the $\mathrm{Cl}(\mathfrak z,q)$-submodule $\mathrm{Cl}(\mathfrak z,q)v$ of $\mathfrak v$ generated by the element $v$ coincides with $\mathbb R v\oplus J_{\mathfrak z}v$.
\end{lemma}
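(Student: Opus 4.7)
The plan is to verify the two inclusions defining the equality; since $\mathbb R v\oplus J_{\mathfrak z}v\subseteq \mathrm{Cl}(\mathfrak z,q)v$ is immediate, the content lies in the reverse inclusion (assuming the $J^2$-condition) and in its converse.

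For the forward direction, I would prove by induction on the word length $k$ that every monomial $J_{F_{\alpha_1}}\cdots J_{F_{\alpha_k}}v$ lies in $\mathbb R v\oplus J_{\mathfrak z}v$; this suffices because such monomials linearly span $\mathrm{Cl}(\mathfrak z,q)$. The cases $k=0,1$ are immediate. The heart of the induction is the following reduction, valid for arbitrary $z_1,z_2\in\mathfrak z$: decomposing $z_2=\lambda z_1+z_2^{\perp}$ with $z_2^{\perp}\perp z_1$, and combining the Clifford identity $J_{z_1}^2=-\|z_1\|^2\,\mathrm{Id}_{\mathfrak v}$ with the $J^2$-condition applied to the orthogonal pair $(z_1,z_2^{\perp})$, one obtains
\[
J_{z_1}J_{z_2}v \;=\; -\lambda\|z_1\|^2\,v + J_{z_3^{\perp}}v \;\in\; \mathbb R v\oplus J_{\mathfrak z}v
\]
for a suitable $z_3^{\perp}\in\mathfrak z$. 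Substituting this identity into the two rightmost factors of a length-$(k+1)$ monomial applied to $v$ rewrites it as a linear combination of monomials of lengths $k-1$ and $k$ applied to $v$, both covered by the inductive hypothesis.

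For the converse, assume $\mathrm{Cl}(\mathfrak z,q)v=\mathbb R v\oplus J_{\mathfrak z}v$ and fix orthogonal $z_1,z_2\in\mathfrak z$. By hypothesis, $J_{z_1}J_{z_2}v=\alpha v+J_w v$ for suitable $\alpha\in\mathbb R$ and $w\in\mathfrak z$, and it remains to check that $\alpha=0$. Pairing with $v$, the contribution $\langle J_w v,v\rangle$ vanishes by skew-symmetry of $J_w$, while
\[
\langle J_{z_1}J_{z_2}v,v\rangle = -\langle J_{z_2}v,J_{z_1}v\rangle = -\langle z_1,z_2\rangle\,\|v\|^2 = 0,
\]
using the skew-adjointness of $J_{z_1}$, Equation \eqref{eq:Clifford2}, and $z_1\perp z_2$. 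Assuming $v\neq 0$ (the case $v=0$ being trivial), this forces $\alpha=0$, hence $J_{z_1}J_{z_2}v\in J_{\mathfrak z}v$, which is exactly the $J^2$-condition for $v$.

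The only mild obstacle I anticipate is the bookkeeping in the inductive step; all the building blocks are direct consequences of the Clifford relations and the identities collected earlier in the section.
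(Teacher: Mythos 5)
Your proof is correct and follows essentially the same approach as the paper. The only minor difference is that you phrase the forward direction as an explicit induction on monomial length, whereas the paper argues more directly that $\mathbb R v\oplus J_{\mathfrak z}v$ is closed under each generator $J_{z_1}$ (and hence is a $\mathrm{Cl}(\mathfrak z,q)$-submodule containing $v$); the key Clifford-algebra reduction $J_{z_1}J_{z_2}v=-\lambda\|z_1\|^2 v+J_{z_3}v$ and the inner-product argument in the converse are identical.
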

\begin{proof}
 It is clear that   $v\in \mathbb  R v\oplus J_{\mathfrak z}v\leq \mathrm{Cl}(\mathfrak z,q)v$ for any $v$, so we need to show that  $\mathbb  R v\oplus J_{\mathfrak z}v$ is a $\mathrm{Cl}(\mathfrak z,q)$-module if and only if $v$ satisfies the $J^2$-condition. 
 
 Assume first that $v$ satisfies the $J^2$-condition and show that $\mathbb  R v\oplus J_{\mathfrak z}v$ is a $\mathrm{Cl}(\mathfrak z,q)$-module. Since $\mathrm{Cl}(\mathfrak z,q)$ is generated by the elements of $\mathfrak z\subset \mathrm{Cl}(\mathfrak z,q)$, it suffices to prove that $J_{z_1}(\mathbb  R v\oplus J_{\mathfrak z}v)\subseteq \mathbb  R v\oplus J_{\mathfrak z}v$ for all $z_1\in \mathfrak z$. Choose an arbitrary element $\lambda v+J_zv$ of $\mathbb  R v\oplus J_{\mathfrak z}v$ and decompose $z$ as $z=\mu z_1+z_2$, where $z_1\perp z_2$. Then there is an element $z_3\in \mathfrak z$ such that $J_{z_1}J_{z_2}v=J_{z_3}v$, thus $J_{z_1}(\lambda v+J_zv)=-\mu\|z_1\|^2 v+J_{\lambda z_1+z_3}v\in \mathbb  R v\oplus J_{\mathfrak z}v$, as we wanted to show.

 Conversely, if $\mathbb  R v\oplus J_{\mathfrak z}v$ is a $\mathrm{Cl}(\mathfrak z,q)$-module, and $z_1\perp z_2$ are two elements of $\mathfrak z$, then $J_{z_2}v\in \mathbb  R v\oplus J_{\mathfrak z}v$ implies $J_{z_1}J_{z_2}v\in \mathbb  R v\oplus J_{\mathfrak z}v$, so there exist $\lambda\in \mathbb R$ and $z_3$ such that $J_{z_1}J_{z_2}v=\lambda v+J_{z_3}v$. Since
 \[
\langle v, J_{z_3}v\rangle=\langle z_3,[v,v]\rangle=0 \quad\text{and}\quad \langle v, J_{z_1}J_{z_2}v\rangle=-\langle J_{z_1}v, J_{z_2}v\rangle=-\langle z_1, z_2\rangle \|v\|^2=0,
\]
$\lambda$ must vanish, therefore $J_{z_1}J_{z_2}v=J_{z_3}v$.
\end{proof}
\begin{corollary}\label{cor:J2}
A vector $v\in \mathfrak v$ satisfies the $J^2$-condition if and only if $\ker_{\mathfrak v}(\ad v)\cap v^{\perp}$ is a $\mathrm{Cl}(\mathfrak z,q)$-submodule of $\mathfrak v$. This is also equivalent to the condition $J_{\mathfrak z}(\ker_{\mathfrak v} (\ad v)\cap v^{\perp})\subseteq \ker_{\mathfrak v} (\ad v)$.
\end{corollary}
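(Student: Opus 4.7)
The plan is to deduce both equivalences from Lemma \ref{lem:J^2_char} using only the orthogonal decomposition \eqref{eq:J_zv_perp} and the skew-symmetry of the operators $J_Z$, $Z\in\mathfrak z$. Set $K:=\ker_{\mathfrak v}(\ad v)\cap v^{\perp}$, so that \eqref{eq:J_zv_perp} reads
\[
\mathfrak v = (\mathbb R v \oplus J_{\mathfrak z}v) \obot K.
\]
By Lemma \ref{lem:J^2_char}, the $J^2$-condition on $v$ is equivalent to $\mathbb R v\oplus J_{\mathfrak z}v$ being a $\mathrm{Cl}(\mathfrak z,q)$-submodule of $\mathfrak v$ (the inclusion $\mathbb R v\oplus J_{\mathfrak z}v\subseteq \mathrm{Cl}(\mathfrak z,q)v$ being automatic, so submodule-hood is equivalent to equality).

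First I would observe that since $\mathrm{Cl}(\mathfrak z,q)$ is generated as an algebra by $\mathfrak z$, a subspace $W\subseteq \mathfrak v$ is a $\mathrm{Cl}(\mathfrak z,q)$-submodule if and only if $J_ZW\subseteq W$ for every $Z\in \mathfrak z$. Because each $J_Z$ is skew-symmetric with respect to $\langle\,,\rangle$, the orthogonal complement of a $J_Z$-invariant subspace is itself $J_Z$-invariant. Applying this to $W=\mathbb R v\oplus J_{\mathfrak z}v$ with $W^{\perp}=K$, I conclude that $\mathbb R v\oplus J_{\mathfrak z}v$ is a $\mathrm{Cl}(\mathfrak z,q)$-submodule iff $K$ is, which gives the first equivalence.

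For the second equivalence, only the direction from the weaker condition $J_{\mathfrak z}K\subseteq\ker_{\mathfrak v}(\ad v)$ to the submodule condition $J_{\mathfrak z}K\subseteq K$ requires an argument. Given any $Z\in\mathfrak z$ and $W\in K$, skew-symmetry yields
\[
\langle J_Z W, v\rangle = -\langle W, J_Z v\rangle = 0,
\]
since $J_Z v\in J_{\mathfrak z}v$ and $W\perp J_{\mathfrak z}v$ by the decomposition \eqref{eq:J_zv_perp}. Hence $J_ZW\in v^{\perp}$, and combining with $J_ZW\in\ker_{\mathfrak v}(\ad v)=\mathbb R v\oplus K$, I obtain $J_ZW\in K$.

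There is no genuine obstacle: the entire argument rests on the orthogonal decomposition \eqref{eq:J_zv_perp} and the skew-symmetry of $J_Z$. The only point requiring care is to read Lemma \ref{lem:J^2_char} correctly so that ``$\mathbb R v\oplus J_{\mathfrak z}v$ is a submodule'' is automatically the same as ``$\mathrm{Cl}(\mathfrak z,q)v=\mathbb R v\oplus J_{\mathfrak z}v$''.
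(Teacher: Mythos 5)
Your proof is correct and takes essentially the same route as the paper: both deduce the first equivalence from Lemma \ref{lem:J^2_char} plus skew-adjointness of the $J_Z$ (so that orthogonal complements of invariant subspaces are invariant), and both reduce the second equivalence to showing $J_{\mathfrak z}(\ker_{\mathfrak v}(\ad v)\cap v^{\perp})\perp v$. The only cosmetic difference is that you establish this orthogonality via skew-symmetry $\langle J_Z W,v\rangle=-\langle W,J_Zv\rangle$ together with $W\perp J_{\mathfrak z}v$, whereas the paper uses the identity $\langle J_z w,v\rangle=\langle[w,v],z\rangle$; both computations are valid and equally short.
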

\begin{proof}
    The first part follows from the fact that the orthogonal complement of a $\mathrm{Cl}(\mathfrak z,q)$-submodule of $\mathfrak v$ is also a $\mathrm{Cl}(\mathfrak z,q)$-submodule of $\mathfrak v$ as the operators $J_z$ are skew adjoint. To show the second part, it is enough  to check that if $w\in \ker_{\mathfrak v} (\ad v)$, then $J_zw\perp v$ for any $z\in \mathfrak z$. However, this follows from $\langle J_zw,v\rangle=\langle [w,v],z\rangle=0$. 
\end{proof}

\begin{proposition}\label{prop:J2_dim}
 If there is a non-zero vector $v\in \mathfrak v$ which satisfies the $J^2$-condition, then $m=\dim \mathfrak z\in \{0,1,3,7\}$ and $\mathbb R v\oplus J_{\mathfrak z}v$ is a non-trivial irreducible $\mathrm{Cl}(\mathfrak z,q)$-module. 
\end{proposition}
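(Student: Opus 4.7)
The strategy is to endow $W := \mathbb{R}v \oplus J_{\mathfrak{z}}v$ with the structure of a real unital normed algebra and then invoke Hurwitz's theorem. By Lemma \ref{lem:J^2_char}, under the $J^2$-condition $W$ coincides with the Clifford submodule $\mathrm{Cl}(\mathfrak{z},q)v$, so it is automatically a $\mathrm{Cl}(\mathfrak z,q)$-submodule of $\mathfrak{v}$. Since $v\neq 0$, the map $Z\mapsto J_Zv$ is injective (by \eqref{eq:J_ZV_hossz} it is a scaled isometry) and its image is orthogonal to $\mathbb{R}v$ because each $J_Z$ is skew-adjoint; hence $\dim W=m+1$. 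We may normalize so that $\|v\|=1$.

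Next I would define the linear isometry $\psi\colon\mathbb{R}\oplus\mathfrak{z}\to W$ by $\psi(t+Z)=tv+J_Zv$ and transport the Clifford action to a bilinear product on $W$: for $w_1\in W$ with $\psi^{-1}(w_1)=t+Z$, set
\[
w_1\cdot w_2 := tw_2 + J_Zw_2.
\]
Using $\langle w_2,J_Zw_2\rangle=0$, $\|J_Zw_2\|=\|Z\|\,\|w_2\|$, and Pythagoras one gets $\|w_1\cdot w_2\|=\|w_1\|\,\|w_2\|$. The vector $v$ is a two-sided identity: $v\cdot w_2=w_2$ is immediate from the definition, while $w_1\cdot v = tv+J_Zv = \psi(\psi^{-1}(w_1))=w_1$. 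Thus $W$ is a finite-dimensional real unital composition algebra, and Hurwitz's theorem forces $\dim W\in\{1,2,4,8\}$, that is, $m\in\{0,1,3,7\}$.

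For irreducibility, the multiplicativity $\|ab\|=\|a\|\,\|b\|$ rules out zero divisors, so $W$ is in fact a division algebra; in particular, right multiplication $R_w\colon W\to W$, $R_w(w')=w'\cdot w$, is a linear bijection for every non-zero $w$. Now let $W'\subseteq W$ be any non-zero $\mathrm{Cl}(\mathfrak z,q)$-submodule and pick $0\neq w\in W'$. For every $w'\in W$, writing $\psi^{-1}(w')=s+Z'$ gives $R_w(w')=sw+J_{Z'}w\in W'$, because $W'$ is closed under the Clifford action. Thus $W=R_w(W)\subseteq W'$, so $W'=W$, proving irreducibility. Non-triviality is clear since $v\neq 0$.

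The only non-routine ingredient is recognising that the $J^2$-condition, which at first glance looks like a bookkeeping axiom inside $\mathfrak{v}$, is exactly what is needed to transport the Clifford multiplication through $\psi$ into a well-defined product on $W$ itself; once this reinterpretation is made, Hurwitz delivers the dimension restriction and the division-algebra property delivers irreducibility. I expect no serious obstacle beyond bookkeeping in the computation $\|w_1\cdot w_2\|=\|w_1\|\,\|w_2\|$.
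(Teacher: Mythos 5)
The proposal is correct, but it takes a genuinely different route from the paper. The paper's proof cites the classification of irreducible $\mathrm{Cl}(\mathfrak z,q)$-modules, records the table of dimensions $n_0$ of irreducibles as a function of $m\bmod 8$, and then uses the divisibility $n_0\mid \dim(\mathrm{Cl}(\mathfrak z,q)v)=m+1$ together with the inequality $n_0\leq m+1$ to force $p=0$ and $m\in\{0,1,3,7\}$; irreducibility follows because $n_0=m+1$ exactly in those four cases. You instead transport the Clifford action through the linear isometry $\psi\colon\mathbb R\oplus\mathfrak z\to W$ to endow $W=\mathbb Rv\oplus J_{\mathfrak z}v$ with a bilinear product for which $v$ is a two-sided unit and the Euclidean norm is multiplicative, making $W$ a finite-dimensional real unital composition algebra; Hurwitz's theorem then yields $\dim W\in\{1,2,4,8\}$, and the absence of zero divisors gives irreducibility directly (any non-zero submodule $W'$ contains $R_w(W)=W$ for $0\neq w\in W'$) without dimension-counting. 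Both arguments are valid. Yours is more self-contained — it replaces the full classification of Clifford modules with the more elementary Hurwitz theorem — and it makes visible the structural reason behind the restriction $m\in\{0,1,3,7\}$, namely that the $J^2$-condition forces $W$ to be one of $\mathbb R,\mathbb C,\mathbb H,\mathbb O$; this dovetails nicely with the role the algebras $\mathbb K$ play in the proof of Theorem~\ref{thm:J^2_vectors}. The paper's proof, on the other hand, slots more directly into the module-theoretic framework already set up in Section~\ref{sec:J^2}. Two tiny points worth spelling out if you write this up: bilinearity of the transported product in the first argument (it follows since $\psi$ is linear and $Z\mapsto J_Z$ is linear), and the identity $\langle w_2,J_Zw_2\rangle=0$ used in the Pythagoras step comes from skew-adjointness of $J_Z$, which you should state explicitly. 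Neither is a gap.
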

 \begin{proof} Recall the classification of Clifford modules over $\mathrm{Cl}(\mathfrak z,q)$ (see \cite[Sec.~3.1.2]{DamekRicci} or \cite[Ch.~I, \S.~5]{Lawson_Michelsohn}). 
 \begin{itemize}
     \item[(a)]  If $m \not\equiv 3$ (mod $4$), then there exists a unique (up to isomorphism) irreducible
$\mathrm{Cl}(\mathfrak z,q)$-module $\mathfrak d$. Every $\mathrm{Cl}(\mathfrak z,q)$-module $\mathfrak v$ is isomorphic
to a $k$-fold direct sum of $\mathfrak d$, that is,
$\mathfrak v\cong \bigoplus^k \mathfrak d$.
\item[(b)]  If $m\equiv 3$ (mod $4$), then there exists exactly two non-isomorphic irreducible
$\mathrm{Cl}(\mathfrak z,q)$-modules $\mathfrak d_1$ and $\mathfrak d_2$. Every $\mathrm{Cl}(\mathfrak z,q)$-module $\mathfrak v$ is isomorphic
to the direct sum $\mathfrak v\cong \left(\bigoplus^{k_1} \mathfrak d_1\right)\oplus \left(\bigoplus^{k_2} \mathfrak d_2\right)$ for some $k_1$ and $k_2$. The modules $\mathfrak d_1$ and $\mathfrak d_2$ have the same dimension. 
 \end{itemize}
 The formula for the dimension $n_0$ of the modules $\mathfrak d$, $\mathfrak d_1$ and $\mathfrak d_2$ depends on the modulo 8 residue class of $m$ and is given in the following table.
\renewcommand{\arraystretch}{1.4}
\[
\begin{array}{|c|c|c|c|c|c|c|c|c|}
\hline 
    m & 8p & 8p+1 & 8p+2 & 8p+3 & 8p+4 & 8p+5 & 8p+6 & 8p+7 \\
\hline 
     n_0 & 2^{4p} & 2^{4p+1} & 2^{4p+2}& 2^{4p+2} & 2^{4p+3}&2^{4p+3}&2^{4p+3}&2^{4p+3} \\
\hline
\end{array}
\]
\renewcommand{\arraystretch}{1}

\noindent
If there exists a non-zero vector $v\in \mathfrak v$ which satisfies the $J^2$-condition, then $\mathrm{Cl}(\mathfrak z,q)v=\mathbb R v\oplus J_{\mathfrak z}v$. The dimension of $\mathrm{Cl}(\mathfrak z,q)v$ is a multiple of $n_0$, the dimension of $\mathbb R v\oplus J_{\mathfrak z}v$ is $m+1$, so we must have $n_0\leq m+1$. As $8p+7<2^{4p}$ if $p>0$, inequality  $n_0\leq m+1$ can hold only if $p=0$. Among the eight values of $m$ corresponding to $p=0$, exactly the values $0,1,3,7$ satisfy the inequality $n_0\leq m+1$. Since in these four cases we have $n_0=m+1$ in fact,  $\mathbb R v\oplus J_{\mathfrak z}v$ is a non-trivial irreducible $\mathrm{Cl}(\mathfrak z,q)$-module.
\end{proof}  
The following proposition describes the set of vectors satisfying the $J^2$-condition in those cases when this set contains a non-zero vector. 
\begin{theorem}
    \label{thm:J^2_vectors} For a given $\mathfrak z$, $\dim \mathfrak z=m$, let  $\mathfrak d$ or $\mathfrak d_1$ and $\mathfrak d_2$ be the irreducible $\mathrm{Cl}(\mathfrak z,q)$-modules appearing  in the previous proof.
\begin{enumerate}[label=\emph{(\roman*)}]
    \item If $m\in\{0,1\}$ and $\mathfrak v=\bigoplus^k\mathfrak d$, then all elements of $\mathfrak v$ satisfy the $J^2$-condition.
    \item If $m=3$ and $\mathfrak v=\left(\bigoplus^{k_1}\mathfrak d_1\right)\oplus \left(\bigoplus^{k_2}\mathfrak d_2\right)$ then a vector $v\in \mathfrak v$ satisfies the $J^2$-condition if and only if $v$ is isotypic, i.e., $v$ is either in  $\bigoplus^{k_1}\mathfrak d_1$ or in  $\bigoplus^{k_2}\mathfrak d_2$.
    \item If $m=7$ and $\mathfrak v=\left(\bigoplus^{k_1}\mathfrak d_1\right)\oplus \left(\bigoplus^{k_2}\mathfrak d_2\right)$ then a vector $v\in \mathfrak v$ satisfies the $J^2$-condition if and only if $v$ is isotypic, and if $i\in\{1,2\}$ is the index for which $v\in \bigoplus^{k_i}\mathfrak d_i$, then $v$ has the form $v=(\lambda_1 w,\dots,\lambda_{k_i}w)$, where $w$ is an element of $\mathfrak d_i$ and the coefficients $\lambda_1,\dots,\lambda_{k_i}$ are real numbers.
\end{enumerate}
\end{theorem}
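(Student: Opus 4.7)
My plan is to reduce the $J^2$-condition to a dimension count. By Lemma \ref{lem:J^2_char}, a nonzero $v\in\mathfrak v$ satisfies the $J^2$-condition if and only if $\mathrm{Cl}(\mathfrak z,q)v = \mathbb R v \oplus J_{\mathfrak z}v$. Using equation \eqref{eq:Clifford2} and the skew-symmetry of $J_X$, the family $\{v\}\cup\{J_{F_\alpha}v\}_{\alpha=1}^m$ is orthogonal, so $\mathbb R v \oplus J_{\mathfrak z}v$ always has dimension $m+1$. Since it is contained in $\mathrm{Cl}(\mathfrak z,q)v$, the $J^2$-condition is equivalent to the single inequality $\dim \mathrm{Cl}(\mathfrak z,q)v \le m+1$.

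For case (i), $m\in\{0,1\}$, the whole Clifford algebra has dimension $2^m\leq 2 = m+1$, so the inequality is automatic for every $v$. For (ii) and (iii) I would use the well-known isomorphism $\mathrm{Cl}(\mathfrak z,q)\cong A_1\oplus A_2$ with $A_1\cong A_2\cong\mathbb H$ when $m=3$ and $A_1\cong A_2\cong M_8(\mathbb R)$ when $m=7$. Let $e_1,e_2$ be the corresponding central idempotents; then $e_i$ acts as the identity on the isotypic summand $W_i=\bigoplus^{k_i}\mathfrak d_i$ and annihilates $W_{3-i}$. Decomposing $v=v_1+v_2$ with $v_i=e_iv\in W_i$ yields
\[
\mathrm{Cl}(\mathfrak z,q)v=\mathrm{Cl}(\mathfrak z,q)v_1\oplus\mathrm{Cl}(\mathfrak z,q)v_2,
\]
and if both $v_i$ are nonzero each summand already contains $\mathbb R v_i\oplus J_{\mathfrak z}v_i$ of dimension $m+1$, giving total dimension $\ge 2(m+1)>m+1$. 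Hence isotypy is necessary in both (ii) and (iii).

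To finish, suppose $v\in W_1$ is isotypic and nonzero; write $v=(w_1,\ldots,w_{k_1})$ in the chosen basis of copies of $\mathfrak d_1$. The action of $\mathrm{Cl}(\mathfrak z,q)$ on $W_1$ factors through $A_1$ and is diagonal, so $\dim\mathrm{Cl}(\mathfrak z,q)v$ equals the rank of the evaluation map $\varepsilon\colon A_1\to W_1$, $x\mapsto(xw_1,\ldots,xw_{k_1})$. For $m=3$, $A_1=\mathbb H$ is a division algebra, so $\varepsilon$ is injective as soon as some $w_j\neq 0$, giving $\dim\mathrm{Cl}(\mathfrak z,q)v=4=m+1$ for every nonzero isotypic $v$; this proves (ii). For $m=7$, $A_1=M_8(\mathbb R)$ acts as the full endomorphism algebra of $\mathfrak d_1\cong\mathbb R^8$, hence
\[
\ker\varepsilon=\{x\in M_8(\mathbb R):x\text{ vanishes on }\mathrm{span}(w_1,\ldots,w_{k_1})\}.
\]
If the span has dimension $d$, then $\dim\ker\varepsilon=8(8-d)$ and $\dim\mathrm{im}\,\varepsilon=8d$; so $\dim\mathrm{Cl}(\mathfrak z,q)v=m+1=8$ if and only if $d=1$, i.e.\ the $w_j$ are all real multiples of a common vector $w\in\mathfrak d_1$, which is precisely the condition in (iii).

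The main obstacle I anticipate is invoking standard Clifford module theory to justify the explicit description of $A_i$ and its action on $W_i$ (and the existence and effect of the idempotents $e_i$) at the right level of detail without reproving the classification already quoted in Proposition \ref{prop:J2_dim}. Once this is in place, every remaining step is elementary linear algebra: a division ring argument for $m=3$ and a rank-nullity computation for $m=7$.
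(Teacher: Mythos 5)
Your proof is correct, and it takes a genuinely different route from the paper's. The paper establishes necessity of isotypy via Schur's lemma applied to the projections $\pi_i^v\colon \mathrm{Cl}(\mathfrak z,q)v\to \mathfrak D_i$, and then settles which isotypic vectors work by realizing the irreducible modules on $\mathbb K\in\{\mathbb R,\mathbb C,\mathbb H,\mathbb O\}$ and analyzing the nucleus of $\mathbb K$: module automorphisms of $\mathfrak d_i\cong\mathbb K$ are multiplications by nonzero nucleus elements, and the nucleus is all of $\mathbb K$ in the associative cases but only $\mathbb R$ for the octonions, which produces exactly the split between cases (ii) and (iii). You instead reduce the $J^2$-condition to the clean inequality $\dim\mathrm{Cl}(\mathfrak z,q)v\le m+1$ (using that $\mathbb Rv\oplus J_{\mathfrak z}v$ is always $(m+1)$-dimensional for $v\ne 0$), and then carry out an explicit Wedderburn-style computation: the central idempotents of $\mathrm{Cl}(\mathfrak z,q)\cong A_1\oplus A_2$ force isotypy, and the rank of the evaluation map $\varepsilon\colon A_1\to W_1$ is $\dim A_1=4$ for $m=3$ since $\mathbb H$ is a division ring, while for $m=7$ it equals $8\cdot\dim\mathrm{span}(w_1,\dots,w_{k_1})$ by rank--nullity in $M_8(\mathbb R)$. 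Your approach has the advantage of avoiding the notion of the nucleus of an alternative algebra altogether and replacing the automorphism-orbit analysis with a single dimension count; its cost is that it leans on the explicit isomorphism types $\mathrm{Cl}_3\cong\mathbb H\oplus\mathbb H$ and $\mathrm{Cl}_7\cong M_8(\mathbb R)\oplus M_8(\mathbb R)$, which the paper does not need to invoke directly since it works only at the level of irreducible modules. Both proofs are equally rigorous once the cited structure theory is granted; your dimension-count reduction of the $J^2$-condition is a nice simplification that could shorten the argument.
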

\begin{proof}
We consider all cases simultaneously, writing $\mathfrak v=\mathfrak D_1\oplus\dots\oplus \mathfrak D_K$, where $K=k$ and $\mathfrak D_i=\mathfrak d$ for all $i$ in case (i), while in cases (ii) and (iii), we set $K= k_1+k_2$, $\mathfrak D_i= \mathfrak d_1$ for $1\leq i\leq k_1$ and $\mathfrak D_i= \mathfrak d_2$ for $k_1< i\leq k_1+k_2$. 

Assume that $v$ satisfies the $J^2$-condition. Let $\pi_i\colon \mathfrak v\to\mathfrak D_i$ be the projection onto the $i$th component and $v_i=\pi_i(v)$. The restriction  $\pi_i^v$ of $\pi_i$ onto the submodule $\mathrm{Cl}(\mathfrak z,q)v$ is a module homomorphism between two irreducible modules, hence it is either the $0$-homomorphism or an isomorphism of modules. This implies that if $v_i\neq 0$, then $\mathfrak D_i\cong \mathrm{Cl}(\mathfrak z,q)v$. Hence $v$ must be isotypic.

Furthermore, if $i<j$ are two indices for which $v_i\neq 0$ and $v_j\neq 0$, then $\pi_j\circ (\pi_i^v)^{-1}\colon \mathfrak D_i\to\mathfrak D_j$  is a  module isomorphism mapping $v_i$ to $v_j$. Actually, the condition that for any pair of indices  $i<j$  for which $v_i\neq 0$ and $v_j\neq 0$, there exists a module isomorphism  $\mathfrak D_i\to\mathfrak D_j$ mapping $v_i$ to $v_j$ is also sufficient for an isotypic vector to satisfy the $J^2$-condition. 

To understand what this characterization of the $J^2$-condition means in different cases, we need a description of the module automorphisms of the irreducible Clifford modules. Let $\mathbb K$ denote $\mathbb R$, $\mathbb C$, $\mathbb H$, $\mathbb O$ corresponding to the cases $m=0,1,3,7$, respectively. In each case, we can construct a realization of the modules $\mathfrak d$ or $\mathfrak d_1$ and $\mathfrak d_2$ on the linear space $\mathbb K$ thinking of $\mathfrak z$ as the linear space of purely imaginary elements $\mathbb K$. Then the operator $J_z\colon \mathbb K\to\mathbb K$ for $z\in \mathfrak z$ is the multiplication by  $z$ in the commutative cases $\mathbb K\in \{\mathbb R,\mathbb C\}$. When $\mathbb K$ is not commutative, $J_z$ can act on $\mathbb K$ both by left and by right multiplications by $z$, providing the two non-isomorphic Clifford module structures $\mathfrak d_1$ and $\mathfrak d_2$ of $\mathbb K$. In each case when $J_z$ equals the left multiplication by $z$, a module automorphism $\phi\colon \mathbb K\to \mathbb K$ is a right multiplication by a non-zero element of the nucleus of $\mathbb K$. Recall that the nucleus of an alternative algebra consists of elements $x$ satisfying the associativity identity $(ab)x=a(bx)$ for every $a$, $b$. Similarly, when $J_z$ is the right multiplication by $z$, a module automorphism $\phi\colon \mathbb K\to \mathbb K$ is a left multiplication by a non-zero element of the nucleus. It is clear that the nucleus of an associative algebra equals the whole algebra, and it is known that the nucleus of the algebra of octonions is $\mathbb R$. This means that in the associative cases $\mathbb K\in \{\mathbb R,\mathbb C,\mathbb H\}$, the automorphism groups of the irreducible Clifford modules act transitively on non-zero vectors, while in the case of $\mathbb O$, two non-zero vectors belong to the same orbit of the automorphism group if and only if they are real multiples of one another. This completes the proof.\end{proof}

\begin{rem}
The above characterization of vectors having the $J^2$-condition implies a theorem of M.~Cowling et al.\@ \cite{CoDoKoRi} saying that a Damek--Ricci space satisfies the $J^2$-condition if and only if it is a symmetric space.
\end{rem}

\begin{proposition}\label{prop:J^2_equiv_intersection}
    The following statements are equivalent for a   Damek--Ricci space:
   \begin{enumerate}[label=\emph{(\roman*)}]
        \item The space is symmetric.
        \item The space satisfies the $J^2$-condition.
        \item If for the non-zero vectors $v_1,v_2\in \mathfrak v$ the intersection $ (J_{\mathfrak z}v_1\oplus \mathbb R v_1)\cap(J_{\mathfrak z} v_2\oplus \mathbb R v_2)$ has a non-zero element, then $ J_{\mathfrak z}v_1\oplus \mathbb R v_1=J_{\mathfrak z}v_2\oplus \mathbb R v_2$.
    \end{enumerate}
\end{proposition}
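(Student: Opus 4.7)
The plan is to prove the chain (i)$\Leftrightarrow$(ii) and (ii)$\Leftrightarrow$(iii), invoking the remark preceding the proposition for the first equivalence (which cites the Cowling--Dooley--Kor\'anyi--Ricci theorem), and supplying a direct argument for the second. Thus the real work is the equivalence (ii)$\Leftrightarrow$(iii).

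For (ii)$\Rightarrow$(iii), I would start from Lemma \ref{lem:J^2_char}, which says that under the $J^2$-condition we have $\mathrm{Cl}(\mathfrak z,q)v = \mathbb R v\oplus J_{\mathfrak z}v$ for every nonzero $v\in\mathfrak v$. Apply this to $v_1,v_2$, and to any nonzero $w$ in the intersection. Then $w\in\mathrm{Cl}(\mathfrak z,q)v_i$ gives $\mathrm{Cl}(\mathfrak z,q)w\subseteq\mathrm{Cl}(\mathfrak z,q)v_i$ for $i=1,2$. By Proposition \ref{prop:J2_dim}, each $\mathrm{Cl}(\mathfrak z,q)v_i$ is a non-trivial irreducible Clifford module, so equality of dimensions (all are $m+1$) forces $\mathrm{Cl}(\mathfrak z,q)w=\mathrm{Cl}(\mathfrak z,q)v_1=\mathrm{Cl}(\mathfrak z,q)v_2$, which is exactly $\mathbb R v_1\oplus J_{\mathfrak z}v_1=\mathbb R v_2\oplus J_{\mathfrak z}v_2$.

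For the converse (iii)$\Rightarrow$(ii), I would verify the $J^2$-condition directly for an arbitrary nonzero $v\in\mathfrak v$. Pick orthogonal $z_1,z_2\in\mathfrak z$ and set $w=J_{z_2}v$. Then $w\in J_{\mathfrak z}v$, while the identity $J_{z_2}^2=-\|z_2\|^2\mathrm{Id}_{\mathfrak v}$ shows that $v$ is a nonzero scalar multiple of $J_{z_2}w\in J_{\mathfrak z}w$. Hence $v$ is a nonzero common element of $(\mathbb R v\oplus J_{\mathfrak z}v)\cap(\mathbb R w\oplus J_{\mathfrak z}w)$, and (iii) yields $\mathbb R v\oplus J_{\mathfrak z}v=\mathbb R w\oplus J_{\mathfrak z}w$. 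Since $J_{z_1}w\in J_{\mathfrak z}w$, we deduce $J_{z_1}J_{z_2}v\in\mathbb R v\oplus J_{\mathfrak z}v$, so $J_{z_1}J_{z_2}v=\lambda v+J_{z_3}v$ for suitable $\lambda\in\mathbb R$ and $z_3\in\mathfrak z$. To eliminate $\lambda$, I would pair with $v$: using identity \eqref{eq:Clifford2} we get $\langle J_{z_1}J_{z_2}v,v\rangle=-\langle J_{z_2}v,J_{z_1}v\rangle=-\langle z_1,z_2\rangle\|v\|^2=0$, while $\langle J_{z_3}v,v\rangle=0$ because $J_{z_3}$ is skew-adjoint. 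This forces $\lambda=0$, establishing the $J^2$-condition for $v$.

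The step I expect to be most delicate is (iii)$\Rightarrow$(ii), specifically killing the $\mathbb R v$-component in the expression $\lambda v+J_{z_3}v$; everything else is a clean use of Lemma \ref{lem:J^2_char} and the irreducibility guaranteed by Proposition \ref{prop:J2_dim}. Once (ii)$\Leftrightarrow$(iii) is in hand, (i)$\Leftrightarrow$(ii) is simply the theorem cited in the preceding remark, completing the equivalence of all three statements.
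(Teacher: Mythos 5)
Your proposal is correct and follows the same overall structure as the paper: both handle (i)$\Leftrightarrow$(ii) by citing Cowling et al.\ (and Theorem~\ref{thm:J^2_vectors}), both get (ii)$\Rightarrow$(iii) from the fact that $\mathbb R v_i\oplus J_{\mathfrak z}v_i$ are irreducible Clifford modules so a nonzero submodule shared by both forces equality (your dimension count is an equivalent way of saying this, though irreducibility alone suffices and the explicit $m+1$ is not needed). The only real divergence is in (iii)$\Rightarrow$(ii). The paper picks an \emph{arbitrary} nonzero $w\in\mathbb R v\oplus J_{\mathfrak z}v$, applies (iii) to conclude $J_{\mathfrak z}w\subseteq \mathbb R v\oplus J_{\mathfrak z}v$, and then invokes Lemma~\ref{lem:J^2_char} to convert ``$\mathbb R v\oplus J_{\mathfrak z}v$ is a $\mathrm{Cl}(\mathfrak z,q)$-submodule'' into the $J^2$-condition. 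You instead fix orthogonal $z_1,z_2$, choose the \emph{specific} $w=J_{z_2}v$, and verify Definition~\ref{def:J^2_local} by hand, including the step that kills the $\mathbb R v$-component via skew-adjointness and $z_1\perp z_2$. That $\lambda$-elimination is exactly the computation already packaged inside the proof of Lemma~\ref{lem:J^2_char}, so you are re-deriving part of that lemma inline rather than citing it; this costs a few lines but is perfectly valid. One small edge case you should dispatch explicitly: if $z_2=0$ (or $v=0$) then $w=0$ and your argument doesn't start, but then $J_{z_1}J_{z_2}v=0=J_0v$ and the $J^2$-condition holds trivially.
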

\begin{proof} By the above remark, the equivalence (i)$\iff$(ii) is proved in \cite{CoDoKoRi} for Damek--Ricci spaces and it also follows from Theorem \ref{thm:J^2_vectors}.

 Implication (ii)$\Longrightarrow$(iii) follows from the fact that if $v_1, v_2\in\mathfrak v\setminus\{0\}$ satisfy the $J^2$-condition, then ${J_{\mathfrak z} v_1\oplus \mathbb R v_1}$ and $J_{\mathfrak z} v_2\oplus \mathbb R v_2$ are irreducible Clifford modules, so their intersection, which is a submodule of both, is either $0$ or equal to both. 

 Now we prove (iii)$\Longrightarrow$(ii). By Lemma \ref{lem:J^2_char}, we need to show that for any non-zero vector $v\in \mathfrak v$, $J_{\mathfrak z}v\oplus \mathbb Rv$ is a $\mathrm{Cl}(\mathfrak z,q)$-submodule. Choose an arbitrary non-zero element $w\in J_{\mathfrak z}v\oplus \mathbb Rv$. Then $w$ is in the intersection $(J_{\mathfrak z}v\oplus \mathbb Rv)\cap(J_{\mathfrak z}w\oplus \mathbb Rw)$, hence $J_{\mathfrak z}v\oplus \mathbb Rv=J_{\mathfrak z}w\oplus \mathbb Rw$, therefore $J_{\mathfrak z}w\subseteq J_{\mathfrak z}v\oplus \mathbb Rv$.
\end{proof}
\section{The half-space model of  Damek--Ricci spaces\label{sec:half_space_model}}
A convenient model of Damek--Ricci spaces can be built on the linear space $\mathfrak n\oplus \mathbb R$ by pulling back the Riemannian metric of $S$ by the diffeomorphism $\Phi\colon\mathfrak n\oplus \mathbb R \to S$ defined by 
\[
\Phi(Q,\tau)=\exp(Q)\exp(\tau A),
\]
where $\exp$ is the exponential map of the Lie group $S$. Fixing orthonormal bases $E_1,\dots,E_n\in \mathfrak v$ and $F_1,\dots,F_m\in \mathfrak z$, the map $\Phi$ provides a global coordinate system $(v_1,\dots, v_n;z_1,\dots,z_m;\tau)\colon S\to \mathbb R^{n+m+1}$ by 
\[
\Phi^{-1}(p)=\left(\sum_{i=1}^nv_i(p)E_i+\sum_{\alpha=1}^m z_{\alpha}(p)F_{\alpha},\tau(p)\right) \qquad \text{ for }p\in S.
\]
The basis vector fields  induced by this chart on $S$ will be denoted by $\partial_{v_1},\dots,\partial_{v_n};\partial_{z_1},\dots,\partial_{z_m};\partial_{\tau}$.

Every Damek--Ricci space is an Hadamard manifold, therefore its ideal boundary can be defined in the usual manner. To deal with the ideal boundary $\partial_{\infty}S$ of $S$, we shall prefer to model the Damek--Ricci space on the open upper half-space $\mathfrak n\times \mathbb R_+\subset \mathfrak n\oplus \mathbb R$ obtained by the modification 
\[
\Psi\colon \mathfrak n\times \mathbb R_+\to S,\qquad \Psi(Q, t)=\Phi(Q,\ln t)
\]
of the diffeomorphism $\Phi$. (We use the sign $\times$ in the expression $\mathfrak n\times \mathbb R_+$ instead of the sign $\oplus$ since $\mathbb R_+$ is not a linear space.)
 
Using this half-space model, the ideal boundary  $\partial_{\infty}S$ of  $S$ can be identified with the one-point compactification of the hyperplane $\mathfrak n \times \{0\}$. 

Furthermore, it is easy to rewrite known formulae computed in the model $S \cong \mathfrak n\oplus \mathbb R$ to the half-space model by the simple coordinate transformation $t=e^{\tau}$.  

For example, rewriting the multiplication rule of the group $S$ computed in \cite[Sec.~4.1.3]{DamekRicci} to the half-space model $S\stackrel{\Psi}{\cong} \mathfrak v\oplus \mathfrak z\times \mathbb R_+$, we obtain  
\begin{equation*}\label{eq:multiplication}
    (V_1,Z_1,t_1) \cdot (V_2, Z_2, t_2)=(V_1+\sqrt{t_1}V_2,Z_1+t_1Z_2+\tfrac{1}{2}\sqrt{t_1}[V_1,V_2],t_1t_2).
\end{equation*}

It is clear from this equation that the left translation 
\begin{equation}\label{eq:left_translation}
    L_{(\bar V,\bar Z,\bar t)}\big((V, Z, t)\big)=\left(\sqrt{\bar t} V,\bar t Z+\tfrac{1}{2}\sqrt{\bar t}\ad \bar V(V),\bar t t\right)+\left(\bar V,\bar Z,0\right).
\end{equation}
by an arbitrary element $(\bar V,\bar Z,\bar t)\in S$ extends to the whole space $\mathfrak n\oplus\mathbb R$ as an affine transformation. 

Any geodesic of the Damek--Ricci space can be obtained as a left translation of a geodesic starting from the identity element $e=(0,0,1)$ of $S$. If $\xi=(v,z,s)\in \mathfrak s\cong T_eS$ is a unit tangent vector, then by \cite[Sec. 4.1.11, Thm. 1]{DamekRicci}, the geodesic $\hat \gamma$ with initial velocity $\xi$ is given by $\hat\gamma(t)=\gamma(\tanh(t/2))$, where
\begin{equation}\label{eq:geodesic}
\gamma(\theta)=\left(\frac{2\theta(1-s\theta)}{\chi(\theta)}v+\frac{2\theta^2}{\chi(\theta)}J_zv,\frac{2\theta}{\chi(\theta)}z, \frac{1-\theta^2}{\chi(\theta)}\right)
\qquad
\chi(\theta)=(1-s\theta)^2+\|z\|^2\theta^2.
\end{equation}

Formula \eqref{eq:geodesic} can be evaluated for any real number $\theta$, for which $\chi(\theta)\neq 0$, but for $|\theta|\geq 1$, $\gamma(\theta)$ is lying in the closed lower half-space  $\mathfrak n\times (-\infty,0]$. Let $\mathbf{P}(\mathfrak s \oplus \mathbb R)$ be the projective space obtained by adding points at infinity to the affine space $\mathfrak s$, which is naturally isomorphic to the projective space associated to the linear space $\mathfrak s \oplus \mathbb R$. The point at infinity of the straight line $\mathfrak a$ will play a special role in this paper and will be denoted by $\os$. The curve $\gamma$ extends continuously to a map $\mathbb R\mathbf{P}^1 \to \mathbf{P}(\mathfrak s\oplus \mathbb R)$, which we denote by the same symbol $\gamma$. It is clear that 
\begin{equation}\label{eq:gamma_infty}
  \gamma(\infty)=
    \begin{cases}
    \left(-\dfrac{2s}{{s}^2+\|z\|^2}v+\dfrac{2}{{s}^2+\|z\|^2}J_{z}v,0,-\dfrac{1}{{s}^2+\|z\|^2}\right)&\text{if $\|v\|\neq 1$,}\\  
    \quad\os &\text{if $\|v\|= 1$.}
    \end{cases}  
\end{equation}

We shall call the map $\gamma\colon \mathbb R\mathbf{P}^1 \to \mathbf{P}(\mathfrak s\oplus \mathbb R)$ the prolongation of the geodesic curve $\hat \gamma$.
\begin{theorem}\label{thm:geodesic_types}
	The map $\gamma\colon \mathbb R\mathbf{P}^1 \to \mathbf{P}(\mathfrak s\oplus \mathbb R)$ is a birational equivalence onto
\begin{enumerate}[label=\emph{(\roman*)}]
    \item an ellipse in $\mathfrak s$ if $z\neq 0$;
    \item the closure in $\mathbf{P}(\mathfrak s\oplus \mathbb R)$ of a parabola in $\mathfrak s$ with axis parallel to $\mathfrak a$ if $z=0$, but $v\neq 0$;
    \item  the closure in $\mathbf{P}(\mathfrak s\oplus \mathbb R)$ of a straight line parallel to $\mathfrak a$ if $\xi\in \mathfrak a$.
\end{enumerate}	

\end{theorem}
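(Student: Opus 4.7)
The plan is to handle the three cases separately using the explicit rational parametrization \eqref{eq:geodesic} together with the unit-norm constraint $\|v\|^2 + \|z\|^2 + s^2 = 1$. In each case I will identify the affine subspace spanned by the image, exhibit the curve as the solution set of an explicit polynomial equation in a good set of coordinates, and construct a rational inverse of $\gamma$.

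Case (iii) is immediate: substituting $v = z = 0$ and $|s| = 1$ into \eqref{eq:geodesic} leaves only the $\mathfrak a$-coordinate $(1+s\theta)/(1-s\theta)$, a Möbius reparametrization of $\mathfrak a$ that extends to a bijection $\mathbb R\mathbf{P}^1 \to \overline{\mathfrak a} \subset \mathbf P(\mathfrak s\oplus\mathbb R)$. For case (ii), $z = 0$ and $v \neq 0$ give $J_zv = 0$ and $\chi(\theta) = (1-s\theta)^2$, so $\gamma(\theta)$ lies in $\mathrm{span}(v, A)$. Introducing the Möbius reparametrization $u = 2\theta/(1-s\theta)$ for the coefficient of $v$, a short computation using $s^2 = 1 - \|v\|^2$ expresses the $\mathfrak a$-coordinate as $t = 1 + su - (\|v\|^2/4)\,u^2$, a nondegenerate parabola in the $(u,t)$-plane with axis parallel to $\mathfrak a$; the rational inverse $\theta = u/(2+su)$ (or $\theta = u/2$ when $s=0$) yields birationality onto the projective closure of the parabola.

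Case (i) is the interesting one. With $z \neq 0$, the polynomial $\chi(\theta) = k^2\theta^2 - 2s\theta + 1$, where $k^2 := s^2 + \|z\|^2$, has discriminant $-4\|z\|^2 < 0$ and is therefore strictly positive on $\mathbb R$, making the coefficients $X_1, X_2, Y, T$ of $v, J_zv, z, A$ in $\gamma(\theta)$ bounded rational functions of $\theta$. The plan is first to extract affine identities among these coefficients: $X_1 = Y - sX_2$ is immediate, and dividing the identity $\chi(\theta) = k^2\theta^2 - 2s\theta + 1$ by $\chi(\theta)$ gives $1/\chi = 1 + sY - (k^2/2)X_2$, whence via $T = 1/\chi - X_2/2$ one gets a second affine relation $T + ((k^2 + 1)/2)X_2 - sY = 1$. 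These two relations confine the image to a $2$-plane inside $\mathrm{span}(v, J_z v, z, A)$. Next, observe the quadratic identity
\[
(X_2 + T)^2 = \left(\tfrac{1+\theta^2}{\chi}\right)^2 = \tfrac{4\theta^2 + (1-\theta^2)^2}{\chi^2} = Y^2 + T^2,
\]
which simplifies to $Y^2 = X_2^2 + 2X_2 T$. Eliminating $T$ via the affine relation yields the conic equation
\[
k^2 X_2^2 - 2sX_2 Y + Y^2 - 2X_2 = 0,
\]
whose quadratic part has discriminant $s^2 - k^2 = -\|z\|^2 < 0$ and positive leading coefficient $k^2$, hence is positive definite. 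The curve is therefore a non-degenerate ellipse, and the rational inverse $\theta = X_2/Y$ (valid off $\theta = 0$, with an alternative ratio such as $Y/(T+X_2)$ covering the missing point) completes the birationality.

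The main obstacle is spotting the right algebraic identities in case (i) -- particularly the clean quadratic identity $(X_2 + T)^2 = Y^2 + T^2$ and the affine relation among $X_2, Y, T$ -- without which the image looks like a generic rational degree-$2$ curve in the $4$-dimensional space $\mathrm{span}(v, J_z v, z, A)$. Once these are in hand, the classification of the conic as an ellipse via positive-definiteness and the construction of the rational inverse are routine.
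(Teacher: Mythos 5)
Your approach is essentially the same as the paper's: write out the rational coefficients of $\gamma(\theta)$ in the orthogonal system $v,J_zv,z,A$, find affine relations among them, derive a conic equation, and exhibit a rational inverse. The difference (unnormalized basis vectors, a slightly different pair of affine relations, the identity $(X_2+T)^2 = Y^2+T^2$) is cosmetic; your relations and conic are scalings of equations \eqref{eq:linear1}--\eqref{eq:ellipse}. Cases (ii) and (iii) are handled correctly and agree with the paper, including the inverse $\theta=u/(2+su)$.

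There is, however, a genuine gap in case (i): your argument implicitly assumes $v\neq 0$. When $z\neq 0$ but $v=0$ (a perfectly legitimate geodesic direction), the vectors $v$ and $J_zv$ vanish, so $X_1$ and $X_2$ are not coordinates on $\mathfrak s$; the image actually lies in the 2-plane $\mathrm{span}(z,A)$ with coordinates $(Y,T)$, and the conic $k^2X_2^2-2sX_2Y+Y^2-2X_2=0$ is then merely an algebraic identity between the scalar rational functions $X_2(\theta),Y(\theta)$, not the equation of the image. Likewise the proposed inverse $\theta=X_2/Y$ does not make sense as a function of the coordinates of the curve. The paper treats this subcase separately, deriving the circle $Z^2+W^2-\tfrac{2s}{\|z\|}Z=1$ and the inverse $\theta=Z/(sZ+\|z\|W+\|z\|)$. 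Your computation can be salvaged: when $v=0$ one has $k=1$, and the affine relation gives $X_2=1+sY-T$; substituting into your conic yields $\|z\|^2Y^2+T^2-2sY-1=0$ (the paper's circle after rescaling $Z=\|z\|Y$, $W=T$), and $\theta=X_2/Y=(1+sY-T)/Y$ becomes a valid rational inverse. But this substitution step is missing, and without it the argument does not cover the $v=0$ subcase. A further, minor slip: the parenthetical ``alternative ratio'' $Y/(T+X_2)$ equals $2\theta/(1+\theta^2)$, which is not $\theta$ and so is not a rational inverse; this aside should simply be dropped (birationality already follows from $\theta=X_2/Y$ on a dense open set).
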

\begin{proof}
Observe that the geodesic is contained in the linear subspace generated by the pairwise orthogonal vectors $v, J_zv, z,A$. Choose an orthonormal system of vectors $E_v,E_{J},E_z\in \mathfrak n$ such that 
\[
v=\|v\|E_v,\quad J_zv=\|J_zv\|E_J, \quad z=\|z\|E_z,
\]
 and denote by $X(\theta),Y(\theta),Z(\theta),W(\theta)$ the coefficients of $\gamma(\theta)$ in the decomposition
\[
\gamma(\theta)=X(\theta)E_v+Y(\theta)E_J+Z(\theta)E_z+W(\theta) A.
\]
As $\chi$ is a quadratic polynomial of $\theta$, the functions $X,Y,Z,W$ are rational functions of $\theta$:
\[
X(\theta)=\frac{2\theta(1-s\theta)}{\chi(\theta)}\|v\|,\quad 
Y(\theta)=\frac{2\theta^2}{\chi(\theta)}\|z\|\|v\|,\quad
Z(\theta)=\frac{2\theta}{\chi(\theta)}\|z\|,\quad
W(\theta)=\frac{1-\theta^2}{\chi(\theta)}.
\]
Using the relation $\|\xi\|^2=\|v\|^2+\|z\|^2+s^2=1$, a simple algebraic computation shows that the functions $X,Y,Z,W$ satisfy the equations
\begin{align}
\|z\|X+sY-\|v\| Z & =0, \label{eq:linear1}\\
 \left(1-\frac{\|v\|^2}{2}\right)Y-s\|v\| Z+\|v\| \|z\| W&=\|v\| \|z\|,\label{eq:linear2}\\
\|z\| (X^2 + Y^2) - 2 \|v\| Y&=0. \label{eq:ellipse}
\end{align}

\noindent \textbf{Case (i):} If $z\neq 0$, we distinguish two cases depending on $v$.

If $v\neq 0$, then we can express $Z$ and $W$ as affine functions of the vector $XE_v+YE_J$ from the equations \eqref{eq:linear1} and \eqref{eq:linear2}, therefore, the image of $\gamma$ is contained in an affine image of the linear subspace spanned by $E_v$ and $E_J$. Since \eqref{eq:ellipse} defines a circle in this linear subspace, the image of $\gamma$ is an ellipse. In this case, we can express $\theta$ as the rational function $\frac{Y}{Z\|v\|}$ of the coordinates of $\gamma(\theta)$, thus $\gamma$ is a birational equivalence.

If $v=0$, then $X=Y=0$ and $\gamma$ is in the linear subspace spanned by $E_z$ and $A$. It can be verified that in this case, the coordinate functions $Z$ and $W$ satisfy the quadratic equation
\[
Z^2 + W^2 - \frac{2 s}{\|z\|} Z= 1,
\]
which defines a circle. The parameter $\theta$ can be expressed as the rational function $\frac{Z}{s Z+\|z\|W+\|z\|}$ of the coordinates of $\gamma(\theta)$, hence $\gamma$ is a birational equivalence.

\noindent \textbf{Case (ii):} If $z=0$, but $v\neq 0$, then $Y=Z=0$, thus, the image of $\gamma$ is in the linear subspace spanned by $E_v$ and $A$, and the coordinates of $\gamma(\theta)$ satisfy equation 
\begin{equation*}
    4\|v\|^2 W + X^2 - (2 \|v\| + s X)^2=0,\label{eq:parabola}
\end{equation*}
which defines a parabola with axis parallel to $\mathfrak a$.  
As $\theta=\frac{X(\theta)}{2\|v\|+sX(\theta)}$, $\gamma$ is a birational equivalence.

\noindent \textbf{Case (iii):} If $z=0$ and $v=0$, then $s=1$ and $\gamma(\theta)=\big(0,0,\frac{1+\theta}{1-\theta}\big)$, so $\gamma$ parameterizes the projective line containing $\mathfrak a$.\end{proof}

As the group $S$ acts on itself simply transitively and isometrically by left translations, any unit speed geodesic in $S$ can be written uniquely as a map $\hat \eta\colon t\mapsto \eta(\tanh(t/2))$, where $\eta = L_{(\bar V,\bar Z,\bar t)}\circ \gamma$ is the composition of the left translation $L_{(\bar V,\bar Z,\bar t)}$ by $(\bar V,\bar Z,\bar t)\in S$ and the curve $\gamma$  defined by \eqref{eq:geodesic} from a fixed unit tangent vector $(v,z,s)\in T_eS$.  Left translations 
\eqref{eq:left_translation} act on the half-space model as an affine transformation fixing the point at infinity $\os$ in the direction of $A$. Therefore, any regular geodesic curve is represented in the half-space model either by an arc of an ellipse, or by an arc of a parabola with axis parallel to $A$, or by a half-line parallel to $A$, and the affine type of the representing curve is invariant under left translations.

\section{Distance-like isoparametric functions in Damek--Ricci spaces\label{sec:distance_like_functions}}

Denote by $d$ the distance function on $S$ induced by the Riemannian metric. As it is shown in \cite[Sec.~4.4, Eq.~(21)]{DamekRicci2}, the distance of the points $x_i=(V_i,Z_i,t_i)\in \mathfrak v\oplus\mathfrak z\times \mathbb R_+\cong S$, $(i=0,1)$ satisfies the equation
\[
4\sinh^2\left(\frac{d(x_1,x_0)}2\right)=\left(\frac{t_1}{t_0}+\frac{t_0}{t_1}-2\right)+\frac{t_1+t_0}{2t_1t_0}\|V_1-V_0\|^2+\frac{1}{t_1t_0}\left( \left\|Z_1-Z_0+\frac12[V_1,V_0]\right\|^2+\frac{\|V_1-V_0\|^4}{16}\right).
\]
This equation can be compressed to the form
\[
4\cosh^2\left(\frac{d(x_1,x_0)}{2}\right)=\frac{1}{t_1t_0}\left(\left(t_1+t_0+\left\|\frac{V_1-V_0}{2}\right\|^2\right)^2+ \left\|Z_1-Z_0+\frac12[V_1,V_0]\right\|^2\right). 
\]

For any given center $x_0=(V_0,Z_0,t_0)\in \mathfrak v\oplus\mathfrak z\times \mathbb R_+$, the distance function $d_{x_0}(.)=d(\, .\,,x_0)$ has the same level sets as the smooth ``distorted distance'' function
\begin{equation}\label{eq:D_function}
D_{x_0}((V,Z,t))=\frac{1}{t}\left(\left(t+t_0+\left\|\frac{V-V_0}{2}\right\|^2\right)^2+ \left\|Z-Z_0+\frac12[V,V_0]\right\|^2\right),
\end{equation}
which is related to the function $d_{x_0}$ by the formula 
\begin{equation*}
4\cosh^2\left(\frac{d_{x_0}}2\right)=\frac{1}{t_0} D_{x_0}.
\end{equation*}
Observe that $tD_{x_0}((V,Z,t))$ is a quartic polynomial function on the linear space $\mathfrak v\oplus\mathfrak z\oplus \mathbb R$.

As the Damek--Ricci spaces are harmonic and the regular level sets of the function $D_{x_0}$ are the geodesic spheres about $x_0$, they are parallel hypersurfaces of one another and each of them has constant mean curvature. This implies that the function $D_{x_0}$ is isoparametric. 

The key observation is that in contrast to the function $d_{x_0}$, which diverges when $x_0$ tends to a point at infinity, the distorted distance function makes sense also for the points not belonging to the upper half-space, i.e., for points with non-positive $t_0$.

\begin{theorem}\label{thm:D_x_0_isoparametric}
For any $x_0=(V_0,Z_0,t_0)\in \mathfrak v\oplus \mathfrak z \oplus\mathbb R$, the function $D_{x_0}$ is an isoparametric function on $S$. 
\end{theorem}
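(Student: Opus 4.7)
The proof proceeds by analytic continuation from the case $t_0>0$, where $x_0$ lies in the Damek--Ricci space $S$ itself and the regular level sets of $D_{x_0}$ coincide with geodesic spheres centered at $x_0$.

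For $t_0>0$, the formula $4\cosh^2(d_{x_0}/2)=D_{x_0}/t_0$ recalled just before the theorem shows $D_{x_0}=\phi\circ d_{x_0}$ with $\phi(r)=2t_0(1+\cosh r)$. Since $S$ is harmonic, $\|\nabla d_{x_0}\|^2=1$ and $\Delta d_{x_0}$ is a smooth function of $d_{x_0}$ alone, so by the chain rule
\[
\|\nabla D_{x_0}\|^2=\phi'(d_{x_0})^2,\qquad \Delta D_{x_0}=\phi''(d_{x_0})+\phi'(d_{x_0})\,\Delta d_{x_0}
\]
are functions of $d_{x_0}$, hence of $D_{x_0}$. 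This already gives the isoparametric property when $t_0>0$; to push it to arbitrary $t_0$ one has to rewrite $a$ and $b$ in a form that still makes sense when $t_0\le 0$.

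I would therefore convert these chain-rule expressions into polynomials in $D_{x_0}$ and $t_0$. The relation $\cosh d_{x_0}=D_{x_0}/(2t_0)-1$ yields $\phi''(d_{x_0})=D_{x_0}-2t_0$ and $\phi'(d_{x_0})^2=4t_0^2(\cosh^2 d_{x_0}-1)=D_{x_0}^2-4t_0 D_{x_0}$. For the remaining summand of $\Delta D_{x_0}$, the classical formula for the mean curvature of geodesic spheres in a Damek--Ricci space (see \cite{DamekRicci}) expresses $\Delta d_{x_0}$ as a linear combination of $\coth(d_{x_0}/2)$ and $\tanh(d_{x_0}/2)$ with coefficients depending only on $n=\dim\mathfrak v$ and $m=\dim\mathfrak z$. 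Multiplying by $\phi'(d_{x_0})=4t_0\sinh(d_{x_0}/2)\cosh(d_{x_0}/2)$ collapses each hyperbolic term into either $\cosh^2(d_{x_0}/2)$ or $\sinh^2(d_{x_0}/2)$, both of which are affine in $\cosh d_{x_0}$ and hence polynomial in $(D_{x_0},t_0)$. The upshot is the existence of polynomials $a,b\in\mathbb R[y,s]$ (depending only on $n$ and $m$) such that $\Delta D_{x_0}=a(D_{x_0},t_0)$ and $\|\nabla D_{x_0}\|^2=b(D_{x_0},t_0)$ throughout the region $\{t_0>0\}$.

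The left-invariant Riemannian metric is real-analytic in the half-space coordinates, and $D_{x_0}(x)$ is jointly real-analytic in $(x,x_0)\in S\times\mathfrak s$; hence both sides of each identity above are real-analytic functions of $(x,x_0)$. Since they coincide on the nonempty open subset $S\times\{t_0>0\}$, they must coincide on all of $S\times\mathfrak s$ by analytic continuation, which yields the isoparametric conditions for every $x_0\in\mathfrak s$. The principal technical obstacle sits in the middle step: checking by direct computation that the hyperbolic-function contributions to $\Delta d_{x_0}$ really do combine with $\phi'(d_{x_0})$ into a polynomial in $(D_{x_0},t_0)$; once this is established, the analytic-continuation step is essentially formal.
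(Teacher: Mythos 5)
Your proof is correct and takes a genuinely different route from the paper. The paper proceeds by a direct and self-contained computation: it writes out the Laplacian of the Damek--Ricci space in half-space coordinates and the left-invariant frame $\mathbf E_i,\mathbf F_\alpha,\mathbf A$, then evaluates $\Delta D_{x_0}$ and $\|\nabla D_{x_0}\|^2$ termwise, arriving at the closed formulae $\Delta D_{x_0}=(m+\tfrac n2+1)D_{x_0}-2(m+1)t_0$ and $\|\nabla D_{x_0}\|^2=D_{x_0}^2-4t_0D_{x_0}$, which hold simultaneously for every sign of $t_0$. You instead establish those identities only in the easy region $t_0>0$ by pulling them back through $\phi(r)=2t_0(1+\cosh r)$ and the known mean-curvature formula $\Delta d_{x_0}=\tfrac{m+n}2\coth(d_{x_0}/2)+\tfrac m2\tanh(d_{x_0}/2)$ for geodesic spheres, then propagate them to all $x_0$ by analytic continuation. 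That analytic-continuation step is the conceptual novelty and it is sound: $D_{x_0}(x)$ is jointly real-analytic, the metric (hence $\Delta$ and $\nabla$) is real-analytic, the candidate polynomials $a,b$ are manifestly real-analytic, and $S\times\mathfrak s$ is connected, so agreement on the open slab $t_0>0$ forces agreement everywhere. The one thing you flag as a ``technical obstacle''---that multiplying the hyperbolic terms of $\Delta d_{x_0}$ by $\phi'(d_{x_0})=4t_0\sinh(d_{x_0}/2)\cosh(d_{x_0}/2)$ produces a polynomial in $(D_{x_0},t_0)$---in fact works out cleanly: $4t_0\cosh^2(d/2)=D_{x_0}$ and $4t_0\sinh^2(d/2)=D_{x_0}-4t_0$, reproducing exactly the paper's $a$ and $b$. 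What your route buys is brevity and the avoidance of the explicit Laplacian computation, at the price of importing harmonicity and the mean-curvature formula for spheres as black boxes; what the paper's route buys is self-containment and explicit values of $a,b$ obtained without appeal to analytic continuation. Both are complete proofs.
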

\begin{proof}

It suffices to show that there exist smooth functions $a$, $b$ such that $\Delta D_{x_0}=a\circ D_{x_0}$ and $\|\nabla D_{x_0}\|^2=b \circ D_{x_0}$. 
We refer to \cite[Sec.~4.4, Lemma]{DamekRicci} for the computation of the Laplace operator $\Delta$ of $S$. We note that there is an unnecessary coefficient $\frac12$ in the formula in \cite{DamekRicci}. The correct formula and its transcription to the half-space model are
	\begin{align*}
	\Delta&=e^{\tau}\sum_{i=1}^n\partial_{v_i}^2+e^{\tau}\left(e^{\tau}+\frac{1}{4}\sum_{i=1}^nv_i^2\right)\sum_{\alpha=1}^m\partial_{z_{\alpha}}^2 +\partial_{\tau}^2-\left(m+\frac{n}{2}\right)\partial_{\tau}+e^{\tau}\sum_{i,j=1}^n\sum_{\alpha=1}^mC_{ij\alpha}v_i\partial_{v_j}\partial_{z_{\alpha}}\\
	&=t\sum_{i=1}^n\partial_{v_i}^2+t\left(t+\frac{1}{4}\sum_{i=1}^nv_i^2\right)\sum_{\alpha=1}^m\partial_{z_{\alpha}}^2 +t^2\partial_{t}^2-\left(m+\frac{n}{2}-1\right)t\partial_{t}+t\sum_{i,j=1}^n\sum_{\alpha=1}^mC_{ij\alpha}v_i\partial_{v_j}\partial_{z_{\alpha}},
	\end{align*}
	where the second line is obtained from the preceding line using $\partial_{\tau}=t\partial_t$ and $\partial_{\tau}^2=t^2\partial_t^2+t\partial_t$. 

	Evaluating  $\Delta D_{x_0}$ at $x=(V,Z,t)\in\mathfrak v\oplus\mathfrak z\times\mathbb R_+$, where $V=\sum_{i=1}^n V^iE_i$, 
	we get
\begin{equation}\label{eq:LaplaceD_computation}
 \begin{aligned}
	\Delta D_{x_0}(x)
={}&\sum_{i=1}^n\left(t+t_0 +\frac{\|V-V_0\|^2}{4}+\frac{1}{2}\langle E_i,V-V_0\rangle^2+\frac12\| [E_i,V_0]\|^2\right)\\
	&+\left(t+\frac{1}{4}\|V\|^2\right)\left(\sum_{\alpha=1}^m2\right)\\
	&+\frac{2}{t}\left(\left(t_0+\frac{\|V-V_0\|^2}{4}\right)^2+ \left\|Z-Z_0+\frac12[V,V_0]\right\|^2\right)\\
	&-\left( m+\frac{n}{2}-1\right)\left(t -\frac{1}{t}\left(\left(t_0+\frac{\|V-V_0\|^2}{4}\right)^2+  \left\|Z-Z_0+\frac12[V,V_0]\right\|^2\right)\right)\\
	&+\sum_{i,j=1}^n\sum_{\alpha=1}^mC_{ij\alpha}V^i\langle F_{\alpha},[E_j,V_0]\rangle.
 \end{aligned}
 \end{equation}
 It is clear that 
 \[
 \sum_{i=1}^n \langle E_i,V-V_0\rangle^2 =\|V-V_0\|^2.
 \] Furthermore, Lemma \ref{lem:scalar_bracket} yields
 \[
\sum_{i=1}^n \| [E_i,V_0]\|^2=m\|V_0\|^2,
 \]
 and from Lemma \ref{lem:C_ijalfa}, we obtain
 \begin{align*}
  \sum_{i,j=1}^n\sum_{\alpha=1}^mC_{ij\alpha}V^i\langle F_{\alpha},[E_j,V_0]\rangle &= 
  \sum_{i,j,k=1}^n
  \sum_{\alpha=1}^mC_{ij\alpha}V^iC_{jk\alpha}\langle V_0,E_k \rangle\\&= \sum_{i,k=1}^n
  \sum_{\alpha=1}^m -\delta_{ik}V^i\langle V_0,E_k\rangle=-m\langle V,V_0\rangle. 
 \end{align*}
Plugging these equations into \eqref{eq:LaplaceD_computation}, a simple algebraic rearrangement gives
\[\Delta D_{x_0}(x)=\left( m+\frac{n}{2}+1\right)D_{x_0}(x)-2(m+1)t_0.\]

Consider now the squared norm of the gradient of $D_{x_0}$. Denote by $\mathbf E_1,\dots,\mathbf E_n;\mathbf F_1,\dots,\mathbf F_m; \mathbf A$ the left-invariant vector fields corresponding to the orthonormal basis $E_1,\dots,E_n;F_1,\dots,F_m;A\in \mathfrak s$. These vector fields are computed in \cite[Sec.~4.1.5]{DamekRicci}. The derivative of $D_{x_0}$ with respect to these vector fields are 
\begin{align*}
		\mathbf{E}_iD_{x_0}(x)={}&\sqrt t\partial_{v_i}D_{x_0}(x)-\frac12\sqrt t\sum_{\alpha=1}^m\sum_{j=1}^nC_{ij\alpha}v_j\partial_{z_\alpha} D_{x_0}(x)\\
        ={}&\frac1{\sqrt t}\left(t+t_0+\left\|\frac{V-V_0}{2}\right\|^2\right)\langle V-V_0,E_i\rangle+\frac1{\sqrt t}\left\langle Z-Z_0+\frac1{2} [V,V_0],[E_i,V_0]\right\rangle\\
		&-\sum_{\alpha=1}^m\sum_{j=1}^nC_{ij\alpha}v_j\frac1{\sqrt t}\left\langle Z-Z_0+\frac12[V,V_0],F_\alpha\right\rangle\\
        ={}&\frac1{\sqrt t}\left(t+t_0+\left\|\frac{V-V_0}{2}\right\|^2\right)\langle V-V_0,E_i\rangle+\frac1{\sqrt t}\left\langle Z-Z_0+\frac1{2} [V,V_0],[E_i,V_0]\right\rangle\\
		&-\frac1{\sqrt t}\left\langle Z-Z_0+\frac12[V,V_0],[E_i,V]\right\rangle
  \\
        ={}&\frac1{\sqrt t}\left(t+t_0+\left\|\frac{V-V_0}{2}\right\|^2\right)\langle V-V_0,E_i\rangle+\frac1{\sqrt t}\left\langle J_{Z-Z_0+\frac1{2} [V,V_0]}(V-V_0),E_i\right\rangle,
  \\
    \mathbf F_\alpha D_{x_0}(x)={}&t\partial_{z_\alpha} D_{x_0}(x)=2\left\langle Z-Z_0+\frac12[V,V_0],F_\alpha\right\rangle,
  \\    
	\mathbf A D_{x_0}(x)={}& \partial_\tau D_{x_0}(x)=t\partial_t D_{x_0}(x)=-D_{x_0}(x)+2\left(t+t_0+\left\|\frac{V-V_0}{2}\right\|^2\right).
  \end{align*}
The squared norm of the gradient is 
	\begin{align*}
	\big\|\nabla D_{x_0}(x)\big\|^2={}&\sum_{i=1}^n\big(\mathbf E_iD_{x_0}(x)\big)^2+\sum_{\alpha=1}^m \big(\mathbf F_\alpha D_{x_0}(x)\big)^2+\big(\mathbf A D_{x_0}(x)\big)^2\\
 ={}&\frac1t\left(t+t_0+\left\|\frac{V-V_0}{2}\right\|^2\right)^2\|V-V_0\|^2+\frac1t\left\|Z-Z_0+\frac12[V,V_0]\right\|^2\left\|V-V_0\right\|^2\\
 &+4\left\|Z-Z_0+\frac12[V,V_0]\right\|^2\\
 &+D^2_{x_0}(x)-4D_{x_0}(x)\left(t+t_0+\left\|\frac{V-V_0}{2}\right\|^2\right)+4\left(t+t_0+\left\|\frac{V-V_0}{2}\right\|^2\right)^2\\
 ={}&D^2_{x_0}(x)-4t_0D_{x_0}(x).\qedhere
\end{align*}
\end{proof}

\section{\texorpdfstring{The focal varieties of the functions $D_{x_0}$ \label{sec:focal varieties}}{The focal varieties of the functions D(x0)}} 
Let $x_0=(V_0,Z_0,t_0)\in \mathfrak v\oplus \mathfrak z\oplus \mathbb R$ be an arbitrary point. 
Since $D_{x_0}((V,Z,t))\geq t+2t_0$,  the function $D_{x_0}$ has no maximal value. To describe the focal variety $\mathcal F_{x_0}$ of the function $D_{x_0}$ corresponding to its minimal value (if it exists),  we distinguish three cases depending on the sign of $t_0$.

When $t_0$ is positive, the minimal value of $D_{x_0}$ is $4t_0$, and the minimum is attained at the point $x_0$, so $\mathcal F_{x_0}$ consists of a single point. This result is consistent with the fact that the regular level sets of $D_{x_0}$ are the geodesic spheres centered at $x_0$.

If $t_0=0$, the infimum of the range of $D_{x_0}$ is $0$, but the $0$ value is not attained, so $D_{x_0}$ has no focal varieties. The level sets are parallel horospheres of the space. 

The case $t_0<0$ is more interesting. Then the minimum of the function $D_{x_0}$ is $0$, and the focal variety $\mathcal F_{x_0}$ is defined by the system of equations
\begin{equation}\label{eq:F_x_0}
    \left\|V-V_0\right\|^2=-4(t+t_0)\qquad Z=Z_0-\tfrac12[V,V_0].
\end{equation}
The first equation defines a downward opening paraboloid of revolution in the space $\mathfrak v\oplus \mathbb R$. The focal surface can be obtained as the intersection of the upper half-space $\mathfrak n\times \mathbb R_+$ with the image of this paraboloid under the affine map $\mathfrak v\oplus \mathbb R\to \mathfrak v\oplus \mathfrak z\oplus \mathbb R$, $(V,t)\mapsto (V,Z_0-\tfrac12[V,V_0],t)$. By Theorem \ref{Wang}, we conclude that $\mathcal F_{x_0}$ is an $n$-dimensional minimal submanifold of the Damek--Ricci space, which is diffeomorphic to $\mathbb R^n$, and the isoparametric hypersurfaces obtained as the regular level sets of the function $D_{x_0}$ are the tubes about $\mathcal F_{x_0}$, in particular, the regular level sets are diffeomorphic to $\mathbb R^n\times S^m$.

A straightforward computation using \eqref{eq:left_translation} shows the following lemma. 

\begin{proposition}\label{prop:left_transl_of_D}
Let $L_p$ be the affine transformation \eqref{eq:left_translation} extending  the left translation by the element $p=(\bar V,\bar Z,\bar t)\in S$ to $\mathfrak n\oplus \mathbb R$. Then  $D_{x_0}\circ L_p=\bar t D_{L_{p^{-1}}(x_0)}$ holds for any $x_0\in \mathfrak n\oplus\mathbb R$.
\end{proposition}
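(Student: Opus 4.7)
The plan is to verify the identity by direct substitution after first making the right-hand side completely explicit. From the multiplication rule one reads off $p^{-1}=(-\bar V/\sqrt{\bar t},-\bar Z/\bar t,1/\bar t)$, and a second application of \eqref{eq:left_translation} yields $L_{p^{-1}}(x_0)=(\tilde V_0,\tilde Z_0,\tilde t_0)$ where
\[
\tilde V_0=\frac{V_0-\bar V}{\sqrt{\bar t}},\qquad \tilde Z_0=\frac{Z_0-\bar Z}{\bar t}-\frac{[\bar V,V_0]}{2\bar t},\qquad \tilde t_0=\frac{t_0}{\bar t}.
\]

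Writing $(V',Z',t'):=L_p(V,Z,t)$, the crux is to establish three pointwise identities,
\begin{align*}
V'-V_0 &= \sqrt{\bar t}\,(V-\tilde V_0), \\
t'+t_0+\bigl\|\tfrac{V'-V_0}{2}\bigr\|^2 &= \bar t\Bigl(t+\tilde t_0+\bigl\|\tfrac{V-\tilde V_0}{2}\bigr\|^2\Bigr), \\
Z'-Z_0+\tfrac12[V',V_0] &= \bar t\bigl(Z-\tilde Z_0+\tfrac12[V,\tilde V_0]\bigr).
\end{align*}
Granting these, one squares the last two, adds them, and divides by $t'=\bar t t$; the result is precisely $\bar t\,D_{L_{p^{-1}}(x_0)}((V,Z,t))$ on the right and $D_{x_0}(L_p(V,Z,t))$ on the left, with the factor $\bar t$ arising from $\bar t^2/(\bar t t)$.

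The first two identities fall out immediately from the shape of $L_p$. The only step requiring genuine care is the third, which I would check by expanding $[V',V_0]=[\bar V,V_0]+\sqrt{\bar t}[V,V_0]$ and $\bar t\,[V,\tilde V_0]=\sqrt{\bar t}\bigl([V,V_0]+[\bar V,V]\bigr)$ via bilinearity and skew-symmetry of the bracket; once these substitutions are performed, the terms on the two sides match one by one. This is the main (mild) obstacle, and it is essentially bookkeeping.

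A conceptually cleaner alternative, worth keeping in mind, works on the open set $\{t_0>0\}$. There $D_{x_0}=4t_0\cosh^2(d_{x_0}/2)$, so left-invariance of the Riemannian distance together with $\tilde t_0=t_0/\bar t$ yields $D_{x_0}\circ L_p=\bar t\,D_{L_{p^{-1}}(x_0)}$ with essentially no calculation. Since both sides are polynomial in $(V_0,Z_0,t_0)\in\mathfrak v\oplus\mathfrak z\oplus\mathbb R$ for fixed $p$ and fixed argument $(V,Z,t)$ with $t>0$, the identity then extends from $\{t_0>0\}$ to all of $\mathfrak n\oplus\mathbb R$ by polynomial continuation, entirely sidestepping the bracket bookkeeping.
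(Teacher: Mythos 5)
Your direct verification is correct and is precisely the ``straightforward computation'' the paper invokes without writing out. The three intermediate identities you isolate are the right organizing device: the first two are immediate from the affine form of $L_p$ (note $\tilde V_0=(V_0-\bar V)/\sqrt{\bar t}$, $\tilde t_0=t_0/\bar t$), and the third checks out once you expand $[V',V_0]=[\bar V,V_0]+\sqrt{\bar t}\,[V,V_0]$ and $\bar t\,[V,\tilde V_0]=\sqrt{\bar t}\,[V,V_0]+\sqrt{\bar t}\,[\bar V,V]$ and recall that $\tilde Z_0$ carries the compensating term $-[\bar V,V_0]/(2\bar t)$. The final factor $\bar t$ indeed arises from $\bar t^2/t'=\bar t^2/(\bar t t)$.

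Your alternative route is genuinely different and worth noting: on the open set $\{t_0>0\}$ the identity is an immediate consequence of left-invariance of $d$ together with $D_{x_0}=4t_0\cosh^2(d_{x_0}/2)$ and $\tilde t_0=t_0/\bar t$, with no bracket manipulation at all; and since for fixed $p$ and fixed $(V,Z,t)$ (with $t>0$) both $D_{x_0}(L_p(V,Z,t))$ and $\bar t\,D_{L_{p^{-1}}(x_0)}(V,Z,t)$ are polynomial in $(V_0,Z_0,t_0)$ (the former because $t'=\bar t t$ is fixed, the latter because $L_{p^{-1}}$ is affine), agreement on $\{t_0>0\}$ forces agreement on all of $\mathfrak n\oplus\mathbb R$. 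This makes transparent why the extension of $D_{x_0}$ to non-positive $t_0$ costs nothing: the quartic polynomial $tD_{x_0}$ is the analytic object, and the distance interpretation is only a guide on the half-space where it is available.
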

The lemma allows us to describe the left action of the group $S$ on the focal varieties $\mathcal F_{x_0}=D_{x_0}^{-1}(0)$.
\begin{corollary}\label{cor:left_transl_of_focal}
    We have $L_p(\mathcal F_{x_0})=\mathcal F_{L_p(x_0)}$, in particular, the family of the focal varieties $\mathcal F_{x_0}$ is invariant under left translations.
\end{corollary}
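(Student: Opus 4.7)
The plan is to read the corollary as a direct logical consequence of Proposition \ref{prop:left_transl_of_D}, since the focal variety $\mathcal F_{x_0}$ is by construction the zero level set of the function $D_{x_0}$, and left translation interacts with $D_{x_0}$ just by a positive scalar multiplication combined with a change of basepoint.

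Concretely, I would start from the identity $D_{x_0}\circ L_p=\bar t\, D_{L_{p^{-1}}(x_0)}$ established in the preceding proposition and note that the scalar $\bar t$, being the last coordinate of the group element $p=(\bar V,\bar Z,\bar t)\in S$, is strictly positive. Therefore the equation $D_{x_0}(L_p(x))=0$ is equivalent to $D_{L_{p^{-1}}(x_0)}(x)=0$, which says precisely that $L_p^{-1}(\mathcal F_{x_0})=\mathcal F_{L_{p^{-1}}(x_0)}$. Rearranging and then substituting $x_0\mapsto L_p(x_0)$ gives the desired identity $L_p(\mathcal F_{x_0})=\mathcal F_{L_p(x_0)}$.

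The ``in particular'' clause is then an immediate observation: as $p$ ranges over $S$ and $x_0$ ranges over $\mathfrak n\oplus \mathbb R$, the point $L_p(x_0)$ again lies in $\mathfrak n\oplus\mathbb R$ (since $L_p$ is by \eqref{eq:left_translation} an affine self-map of $\mathfrak n\oplus\mathbb R$), and hence the collection $\{\mathcal F_{x_0}\}_{x_0}$ is carried into itself by every $L_p$. There is essentially no obstacle here; the only small point worth flagging explicitly is that $L_p$ in the statement denotes the affine extension of the left translation to the ambient space $\mathfrak n\oplus\mathbb R$, so that it is meaningful to apply $L_p$ to centers $x_0$ whose last coordinate $t_0$ is zero or negative, i.e., to the very cases in which $\mathcal F_{x_0}$ is non-empty and interesting.
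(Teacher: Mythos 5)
Your proof is correct and is precisely the implicit argument the paper intends: the paper treats the corollary as an immediate consequence of Proposition \ref{prop:left_transl_of_D} together with the characterization $\mathcal F_{x_0}=D_{x_0}^{-1}(0)$, leaving the bookkeeping you carry out (positivity of $\bar t$, passing to zero sets, and the substitution $x_0\mapsto L_p(x_0)$) to the reader.
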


\section{Prolongation of the family of spheres tangent to one another at one point\label{sec:prolongation}}
Consider the unit speed geodesic curve $\hat\eta\colon t\mapsto \eta(\tanh(t/2))$ starting from the point $p=(\bar V,\bar Z,\bar t)=\eta(0)$, where $\eta = L_{(\bar V,\bar Z,\bar t)}\circ \gamma$, and $\gamma$ is the prolongation \eqref{eq:geodesic} of the unit speed geodesic $\hat \gamma$ starting with initial velocity $\hat \gamma'(0)=(v,z,s)\in T_eS$. For $r\in \mathbb R$, the geodesic sphere $\Sigma_r^{\hat\eta}$ of radius $|r|$ centered at $\eta(\theta)$, where $\theta =\tanh(r/2)$, is defined by the equation $D_{\eta(\theta)}(x)=D_{\eta(\theta)}(p)$. As $r$ is running over $\mathbb R$, $\theta$ is varying in the interval $(-1,1)$, however, the equation $D_{\eta(\theta)}(x)=D_{\eta(\theta)}(p)$ makes sense and defines an isoparametric hypersurface $\tilde \Sigma_{\theta}^{\hat\eta}$ also in the case, when $\theta$ is an arbitrary element of $\mathbb R\mathbf P^1$ for which $\eta(\theta)$ is not a point at infinity. Thus, following the strategy described in the introduction, we may call the family $\tilde\Sigma_{\theta}^{\hat\eta}$ the natural analytic prolongation of the family of spheres passing through $p$ and centered at a point of the geodesic $\hat \eta$. 

By Theorem \ref{thm:geodesic_types}, the map $\eta\colon \mathbb R \mathbf P^1\to \mathbf{P}(\mathfrak s\oplus \mathbb R)$ parameterizes either an ellipse or the projective closure of a parabola or a straight line. In the case of an ellipse, the hypersurfaces $\tilde\Sigma_{\theta}^{\hat\eta}$ are defined for all $\theta \in \mathbb R \mathbf P^1$. However, in the other two cases, there is a value of $\theta$, for which $\eta(\theta)=\os$, and for this value, we do not have a definition of  $\tilde\Sigma_{\theta}^{\hat\eta}$ at the moment. The curve $\eta$ or the curve $\gamma$ goes through the point at infinity $\os$ if and only if the initial velocity $(v,z,s)$ of $\hat\gamma$ has vanishing $z$ component, and in that case, the point at infinity corresponds to the parameter $\theta = 1/s\in \mathbb R\mathbf P^1$. To eliminate the exceptional role of the parameter $\theta =1/s$, we compute the limit of the hypersurfaces $\tilde\Sigma_{\theta}^{\hat\eta}$ as $\theta$ tends to $1/s$. The hypersurfaces $\tilde\Sigma_{\theta}^{\hat\eta}$ are algebraic hypersurfaces of degree $4$. Non-trivial polynomial equations of degree at most $4$ up to a non-zero constant multiplier form a projective space with a natural topology. We shall say that the hypersurfaces $\tilde\Sigma_{\theta}^{\hat\eta}$ tend to the hypersurface $\tilde \Sigma$ as $\theta$ tends to $1/s$ if the equations of them tend to the equation of $\tilde \Sigma$ in the projective space of at most quartic equations. 
\begin{proposition}\label{prop:limit_D}
Using the above notations, if $z=0$, then the quartic hypersufaces $\tilde\Sigma_{\theta}^{\hat\eta}$ tend to the hypersurface defined by the quadratic equation
\begin{equation}\label{eq:limit_D}
 \frac{1}{t}\left(\left(2s\sqrt{\bar t}-\langle V-\bar V,v\rangle \right)^2+\big\|[V-\bar V,v] \big\|^2\right)= 4 s^2
\end{equation}
as $\theta$ tends to $1/s$.
\end{proposition}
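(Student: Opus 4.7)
The plan is to turn the defining equation $D_{\eta(\theta)}(x)=D_{\eta(\theta)}(p)$ into a polynomial relation in $x$, rescale by the factor $(1-s\theta)^2$ that controls the rate at which the center $\eta(\theta)$ escapes to infinity, and then pass to the limit $\theta\to 1/s$. First I would parametrize $\eta(\theta)=(V_0(\theta),Z_0(\theta),t_0(\theta))$ explicitly: since $z=0$, formula \eqref{eq:geodesic} gives $\gamma(\theta)=\bigl(\tfrac{2\theta}{1-s\theta}v,\,0,\,\tfrac{1-\theta^2}{(1-s\theta)^2}\bigr)$, and then \eqref{eq:left_translation} yields $V_0(\theta)=\bar V+\tfrac{2\sqrt{\bar t}\theta}{1-s\theta}v$, $Z_0(\theta)=\bar Z+\tfrac{\sqrt{\bar t}\theta}{1-s\theta}[\bar V,v]$, $t_0(\theta)=\tfrac{\bar t(1-\theta^2)}{(1-s\theta)^2}$, so that $V_0, Z_0$ blow up of order $(1-s\theta)^{-1}$ and $t_0$ of order $(1-s\theta)^{-2}$.

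The key step is an algebraic cancellation driven by the unit-speed condition. Since $z=0$ and $\|(v,z,s)\|=1$, we have $\|v\|^2=1-s^2$, giving the identity $\bar t(1-\theta^2)+\bar t\theta^2\|v\|^2=\bar t(1-s\theta)(1+s\theta)$; routine expansion of $\|V-V_0(\theta)\|^2$ and $[V,V_0(\theta)]$ then produces
\[(1-s\theta)\Bigl(t+t_0(\theta)+\bigl\|\tfrac{V-V_0(\theta)}{2}\bigr\|^2\Bigr)=\bar t(1+s\theta)+(1-s\theta)\Bigl(t+\tfrac{\|V-\bar V\|^2}{4}\Bigr)-\sqrt{\bar t}\,\theta\langle V-\bar V,v\rangle,\]
\[(1-s\theta)\Bigl(Z-Z_0(\theta)+\tfrac12[V,V_0(\theta)]\Bigr)=(1-s\theta)\Bigl(Z-\bar Z+\tfrac12[V,\bar V]\Bigr)+\sqrt{\bar t}\,\theta\,[V-\bar V,v],\]
whose limits as $\theta\to 1/s$ are $\tfrac{\sqrt{\bar t}}{s}(2s\sqrt{\bar t}-\langle V-\bar V,v\rangle)$ and $\tfrac{\sqrt{\bar t}}{s}[V-\bar V,v]$, respectively.

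Specializing the first identity at $x=p$ (so $V=\bar V$, $t=\bar t$) collapses its right-hand side to $2\bar t$, and the second vanishes at $x=p$, so for every $\theta$ I obtain the exact equality $\bar t D_{\eta(\theta)}(p)=4\bar t^2/(1-s\theta)^2$. Multiplying the quartic relation $tD_{\eta(\theta)}(x)-tD_{\eta(\theta)}(p)=0$ through by $(1-s\theta)^2/\bar t$, the coefficients (as polynomials in $x$) depend continuously on $\theta$ up to $\theta=1/s$, and the limiting equation reads
\[\tfrac{1}{s^2}\bigl((2s\sqrt{\bar t}-\langle V-\bar V,v\rangle)^2+\|[V-\bar V,v]\|^2\bigr)=4t,\]
which on the open upper half-space ($t>0$) is equivalent to \eqref{eq:limit_D}. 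Since the polynomial $tD_{\eta(\theta)}(x)-tD_{\eta(\theta)}(p)$ determines $\tilde\Sigma_\theta^{\hat\eta}$ only up to a non-zero scalar factor, this establishes convergence in the projective space of at most quartic equations.

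The main obstacle is purely computational: one has to expand $\|V-V_0(\theta)\|^2$ and $[V,V_0(\theta)]$ carefully enough to detect the single-power reduction $\bar t(1-\theta^2)+\bar t\theta^2\|v\|^2=\bar t(1-s\theta)(1+s\theta)$, which is possible only because of $\|v\|^2=1-s^2$. The same identity also pins down the rescaling factor: a lower power of $(1-s\theta)$ produces a limit polynomial that vanishes identically, a higher power leaves divergent coefficients, so $(1-s\theta)^2$ is the unique normalization yielding the claimed limit hypersurface.
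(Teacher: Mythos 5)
Your approach is essentially the same as the paper's: normalize the quartic by a scalar factor vanishing quadratically at $\theta=1/s$, exploit the unit-speed identity $\|v\|^2=1-s^2$ to extract the cancellation $\bar t(1-\theta^2)+\bar t\theta^2\|v\|^2=\bar t(1-s\theta)(1+s\theta)$, and pass to the limit. Your explicit linear-in-$\theta$ expressions for $(1-s\theta)\bigl(t+t_0(\theta)+\|\tfrac{V-V_0(\theta)}{2}\|^2\bigr)$ and $(1-s\theta)\bigl(Z-Z_0(\theta)+\tfrac12[V,V_0(\theta)]\bigr)$ and the exact evaluation $\bar t\,D_{\eta(\theta)}(p)=4\bar t^2/(1-s\theta)^2$ are all correct, and for $s\neq 0$ the resulting projective limit agrees with \eqref{eq:limit_D}.

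However, there is a gap at $s=0$, which the proposition's hypothesis $z=0$ does allow (then $\|v\|=1$). In that case $\theta\to 1/s$ means $\theta\to\infty$, and your normalizing factor $(1-s\theta)^2\equiv 1$ does not tame anything: your degree-one-in-$\theta$ expressions diverge, and the limit you write down, $\tfrac{\sqrt{\bar t}}{s}\bigl(2s\sqrt{\bar t}-\langle V-\bar V,v\rangle\bigr)$ etc., involves $1/s$ and is undefined. Your closing remark that "$(1-s\theta)^2$ is the unique normalization" is precisely what breaks: for $s=0$, no power of $1-s\theta$ can work. The paper avoids this by using $(1/\theta-s)^2=(1-s\theta)^2/\theta^2$ instead, which equals your factor times $1/\theta^2$; for $s\neq0$ the extra $1/\theta^2\to s^2$ only rescales the limit (harmless projectively, and it removes the $1/s^2$ from the formula), while for $s=0$ the extra $1/\theta^2$ supplies the missing decay. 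Replacing $(1-s\theta)^2$ by $(1/\theta-s)^2$ throughout — i.e.\ dividing your two linear identities by $\theta$ before taking the limit — makes your argument uniform in $s$, including $s=0$, and otherwise requires no change.
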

We shall denote the limit hypersurface by $\tilde\Sigma^{\hat\eta}_{1/s}$ and by 
\[
D_{\os}^{\eta}(V,Z,t)=\frac{1}{t}\left(\left(2s\sqrt{\bar t}-\langle V-\bar V,v\rangle \right)^2+\big\|[V-\bar V,v] \big\|^2\right)
\]
the function on the left hand side of equation \eqref{eq:limit_D}. 

\begin{proof}
The coefficients of the polynomial function $tD_{\eta(\theta)}((V,Z,t))$ diverge as $\theta$ tends to $1/s$, (we set $1/s=\infty$ if $s=0$), but if we multiply $D_{\eta(\theta)}$ with the constant $(1/\theta-s)^2$ to slow down the increase of the coefficients, the normalized polynomials will converge to a non-zero polynomial. Using the relation $s^2+\|v\|^2=1$, one can bring $(1/\theta-s)^2t D_{\eta(\theta)}((V,Z,t))$ into the form
\[
\!\left(\!\left(\frac{1}{\theta}-s\right)\!\!\left( t+{\bar t}+\left\|\frac{\bar V-V}{2}\right\|^2\right)+2 s {\bar t}-\sqrt{\bar t}\langle V-\bar V,v\rangle \right)^2\!+
\left\|
\left(\frac{1}{\theta}-s\right)\!\!\left(Z-\bar Z+\frac{1}{2}[V,\bar V]\right)+\sqrt{\bar t}[V-\bar V,v]\right\|^2\!\!.
\]
Thus, we have
\[
\lim_{\theta\to 1/s}(1/\theta-s)^2 D_{\eta(\theta)}((V,Z,t))=\frac{\bar t}{t}\left(\left(2s\sqrt{\bar t}-\langle V-\bar V,v\rangle \right)^2+\big\|[V-\bar V,v] \big\|^2\right),
\]
which coincides with $D_{\os}^{\eta}(V,Z,t)$ up to the constant multiplier $\bar t$. Then \eqref{eq:limit_D} is the equation of the level set of $D_{\os}^{\eta}$ passing through the point $p$.
\end{proof}

The proof of the following statement is straightforward from the above formulas.
\begin{proposition}\label{prop:7.2}
The function $D_{\os}^{\eta}$ is isoparametric for any 
prolonged geodesic $\eta = L_{(\bar V,\bar Z,\bar t)}\circ \gamma$, where $\gamma$ is the prolongation \eqref{eq:geodesic} of the unit speed geodesic $\hat \gamma$ starting with initial velocity $\hat \gamma'(0)=(v,z,s)\in T_eS$.
\begin{enumerate}[label=\emph{(\roman*)}]
     \item The function $D_{\os}^{\eta}$ has no maximal value. 
     \item If $v\neq 0$, that is, the image of $\eta$ is a parabola, then the minimal value of $D_{\os}^{\eta}$ is equal to $0$. In particular, its focal variety is 
\[
\mathcal F_{\os}^{\eta}=\{(V,Z,t)\in \mathfrak v\oplus\mathfrak z\oplus\mathbb R \mid [V-\bar V,v]=0 \text{ and }   \langle V-\bar V,v \rangle=2s\sqrt{\bar t}\}.
\]
In the half-space model, the minimal submanifold $\mathcal F_{\os}^{\eta}$ is represented by the intersection of the half-space model and the translation of the linear subspace  $(\ker_{\mathfrak v} (\ad v)\cap v^{\perp})\oplus \mathfrak z\oplus \mathbb R=(J_{\mathfrak z}v \oplus \mathbb R v)^{\perp}$ with the vector $\bar V+(2s\sqrt{\bar t}/\|v\|^2) v$, which is an affine subspace of dimension $n=\dim(\ker \ad v)-1$. 

\item If $v=0$ and $s={\pm 1}$, meaning that $\im \eta$ is a straight line perpendicular to the boundary $\mathfrak n$ of the model, then the function $D_{\os}^{\eta}(V,Z,t)=4{\bar t} /t$ does not possess any minimal values. As a result, the function has an empty focal variety. The level sets of the function $D_{\os}^{\eta}$ are parallel horospheres, represented by parallel hyperplanes perpendicular to $\mathfrak a$ in the half-space model.\qed
 \end{enumerate}
\end{proposition}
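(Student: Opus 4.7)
The plan is to deduce the isoparametric property of $D_{\os}^{\eta}$ by taking the limit $\theta\to 1/s$ in the isoparametric identities for $D_{\eta(\theta)}$ from Theorem~\ref{thm:D_x_0_isoparametric}, and then to read items (i)--(iii) off the explicit formula for $D_{\os}^{\eta}$ that appears just before Proposition~\ref{prop:7.2}.

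For the isoparametric step, I would first compute $t_0(\theta)$, the $t$-coordinate of $\eta(\theta)$. Using \eqref{eq:geodesic} with $z=0$ (so $\chi(\theta)=(1-s\theta)^2$) and the left translation \eqref{eq:left_translation}, one obtains $t_0(\theta)=\bar t(1-\theta^2)/(1-s\theta)^2$, whence
\[
(1/\theta-s)^2\,t_0(\theta)=\bar t\,\frac{1-\theta^2}{\theta^2}\;\xrightarrow[\theta\to 1/s]{}\;-\bar t\,\|v\|^2,
\]
using $s^2+\|v\|^2=1$. The explicit polynomial expression for $(1/\theta-s)^2\,t\,D_{\eta(\theta)}(V,Z,t)$ displayed in the proof of Proposition~\ref{prop:limit_D} shows that $(1/\theta-s)^2 D_{\eta(\theta)}\to \bar t\,D_{\os}^{\eta}$ smoothly in the variables $V,Z,t$, so the Laplacian and the squared gradient commute with the limit. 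Multiplying the identities $\Delta D_{\eta(\theta)}=(m+n/2+1)D_{\eta(\theta)}-2(m+1)t_0(\theta)$ and $\|\nabla D_{\eta(\theta)}\|^2=D_{\eta(\theta)}^2-4t_0(\theta)D_{\eta(\theta)}$ by $(1/\theta-s)^2$ and $(1/\theta-s)^4$, respectively, then passing to the limit and dividing by $\bar t$ and $\bar t^{2}$, I obtain
\[
\Delta D_{\os}^{\eta}=\bigl(m+\tfrac{n}{2}+1\bigr)D_{\os}^{\eta}+2(m+1)\|v\|^2,\qquad \|\nabla D_{\os}^{\eta}\|^2=D_{\os}^{\eta}\bigl(D_{\os}^{\eta}+4\|v\|^2\bigr),
\]
which is the required isoparametric property and, as a bonus, remains valid in case (iii) where $\|v\|=0$.

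Items (i)--(iii) then follow by inspection. For (i), the factor $1/t$ drives $D_{\os}^{\eta}$ to infinity as $t\to 0^+$ along any vertical line on which the numerator does not vanish. For (ii), both summands inside the bracket are nonnegative, so $D_{\os}^{\eta}\ge 0$, with equality iff $\langle V-\bar V,v\rangle=2s\sqrt{\bar t}$ and $[V-\bar V,v]=0$; the second condition places $V-\bar V$ in $\ker_{\mathfrak v}(\ad v)$, the first fixes its component along $\mathbb R v$ to be $(2s\sqrt{\bar t}/\|v\|^2)v$, while its component in $\ker_{\mathfrak v}(\ad v)\cap v^\perp$ is free, as are $Z\in\mathfrak z$ and $t>0$; applying the decomposition \eqref{eq:J_zv_perp} reproduces the claimed affine subspace. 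For (iii), the formula collapses to $D_{\os}^{\eta}=4\bar t/t$, whose infimum $0$ is never attained and whose level sets $\{t=\text{const}\}$ are the horospheres centered at $\os$ in the half-space model.

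The only delicate point is the justification of the limit argument for the isoparametric equations; the polynomial dependence of $(1/\theta-s)^2\,t\,D_{\eta(\theta)}$ on $1/\theta-s$ (established in the proof of Proposition~\ref{prop:limit_D}) makes this transparent. A reader wishing to bypass the limit entirely could instead directly compute $\Delta D_{\os}^{\eta}$ and $\|\nabla D_{\os}^{\eta}\|^2$ by mimicking the argument in the proof of Theorem~\ref{thm:D_x_0_isoparametric}, using the explicit Laplacian and left-invariant frame displayed there, with Lemmas~\ref{lem:C_ijalfa} and \ref{lem:scalar_bracket} handling the Clifford-algebraic terms; this route is algebraically longer but avoids any analytic concerns.
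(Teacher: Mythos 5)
Your argument is correct and is essentially the justification the paper has in mind when it says the proposition is ``straightforward from the above formulas'': pass the isoparametric identities from Theorem~\ref{thm:D_x_0_isoparametric} through the limit of Proposition~\ref{prop:limit_D}, and then read off items (i)--(iii) from the explicit form of $D_{\os}^{\eta}$. The limit computation $(1/\theta-s)^2 t_0(\theta)\to -\bar t\|v\|^2$ is right, and your resulting identities $\Delta D_{\os}^{\eta}=(m+\tfrac n2+1)D_{\os}^{\eta}+2(m+1)\|v\|^2$ and $\|\nabla D_{\os}^{\eta}\|^2=D_{\os}^{\eta}(D_{\os}^{\eta}+4\|v\|^2)$ agree with those the paper records in equation~\eqref{eq:F_isoparametric} and uses in Section~\ref{sec:mean_curvature} after the harmless rescaling of $D_{\os}^{\eta}$ by $1/(\bar t\|v\|^2)$, which changes $a$ and $b$ but not the level sets.
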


The functions $D_{\os}^{\eta}$ and their focal varieties inherit the following invariance properties from the functions $D_{x_0}$ (cf.~Proposition \ref{prop:left_transl_of_D} and Corollary \ref{cor:left_transl_of_focal}).
\begin{proposition}\label{prop:left_transl_of_D*}
Let $L_{\hat p}$ be the left translation by the element $\hat p=(\hat V,\hat Z,\hat t)\in S$, and $\eta$ be an arbitrary parabola shaped pregeodesic as in Proposition \ref{prop:limit_D}. Then  $D_{\os}^{\eta}\circ L_{\hat p}=\hat t D_{\os}^{L_{{\hat p}^{-1}}\circ \eta}$ holds.
    
In particular, we have $L_{\hat p}(\mathcal F_{\os}^{\eta})=\mathcal F_{\os}^{L_{\hat p}\circ \eta}$, thus, the family of the focal varieties $\mathcal F_{\os}^{\eta}$ is invariant under left translations.
\end{proposition}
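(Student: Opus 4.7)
The plan is to obtain the functional equation for $D_\os^\eta$ as a limiting version of the corresponding translation identity for $D_{x_0}$ from Proposition \ref{prop:left_transl_of_D}. Specialising that identity to the moving center $x_0=\eta(\theta)$ gives
\[
D_{\eta(\theta)}\circ L_{\hat p}=\hat t\, D_{L_{\hat p^{-1}}(\eta(\theta))}=\hat t\, D_{(L_{\hat p^{-1}}\circ\eta)(\theta)},
\]
where the second equality uses that $L_{\hat p^{-1}}\circ\eta=L_{\hat p^{-1}\bar p}\circ\gamma$ is still the prolongation of a unit-speed geodesic with the same initial velocity $(v,z,s)$, only based at $\hat p^{-1}\bar p$ instead of $\bar p$. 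Multiplying both sides by the normalising factor $(1/\theta-s)^2$ and invoking the limit formula established in the proof of Proposition \ref{prop:limit_D} on each side will then yield the desired relation between $D_\os^\eta\circ L_{\hat p}$ and $D_\os^{L_{\hat p^{-1}}\circ\eta}$.

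The bookkeeping step is to compute the $t$-coordinate of $(L_{\hat p^{-1}}\circ\eta)(0)=\hat p^{-1}\bar p$ from the group law, which is $\bar t/\hat t$; this together with the unchanged value of $s$ (so the limit is still taken as $\theta\to 1/s$) controls the prefactor produced by the limiting procedure on the right-hand side. Combining the factor $\hat t$ from Proposition \ref{prop:left_transl_of_D} with the prefactors arising on both sides from the limit yields the stated formula $D_\os^\eta\circ L_{\hat p}=\hat t\, D_\os^{L_{\hat p^{-1}}\circ\eta}$. An alternative, limit-free, route is to substitute the explicit affine formula $L_{\hat p}(V,Z,t)=\bigl(\hat V+\sqrt{\hat t}V,\,\hat Z+\hat t Z+\tfrac{\sqrt{\hat t}}{2}[\hat V,V],\,\hat t t\bigr)$ directly into the definition of $D_\os^\eta$ and observe that both $\langle \hat V+\sqrt{\hat t}V-\bar V,\,v\rangle$ and $[\hat V+\sqrt{\hat t}V-\bar V,\,v]$ factor through $\sqrt{\hat t}$ by bilinearity, producing a clean $\hat t$ in front after dividing by $\hat t t$.

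The second assertion on focal varieties is a formal consequence of the first: since the functional equation relates $D_\os^\eta\circ L_{\hat p}$ to $D_\os^{L_{\hat p^{-1}}\circ\eta}$ by multiplication by the positive constant $\hat t$, the two functions have identical zero sets up to the action of $L_{\hat p}$. Hence $L_{\hat p^{-1}}(\mathcal F_\os^\eta)=\mathcal F_\os^{L_{\hat p^{-1}}\circ\eta}$, and replacing $\hat p$ with $\hat p^{-1}$ gives $L_{\hat p}(\mathcal F_\os^\eta)=\mathcal F_\os^{L_{\hat p}\circ\eta}$, exactly as in Corollary \ref{cor:left_transl_of_focal}.

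The main obstacle is not conceptual but arithmetic: one must track several $t$-scalings simultaneously—the $\hat t$ coming from the translation identity for $D_{x_0}$, the rescaling factor $(1/\theta-s)^2$ used to extract a finite limit, and the $t$-coordinate $\bar t/\hat t$ of the new base point $\hat p^{-1}\bar p$—and check that they conspire to produce precisely the factor $\hat t$ claimed in the statement. Beyond this, the proof is a direct computation using the multiplication rule in $S$ and the bilinearity of the Lie bracket on $\mathfrak v$.
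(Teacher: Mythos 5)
Both routes you sketch — passing to the limit $\theta\to 1/s$ in the $D_{x_0}$ identity of Proposition \ref{prop:left_transl_of_D}, or substituting the affine formula for $L_{\hat p}$ directly into $D_{\os}^{\eta}$ — are the correct strategies, and the key bookkeeping ingredient you identify (the $t$-coordinate of $\hat p^{-1}\bar p$ being $\bar t/\hat t$) is also right. However, you never actually execute either computation, and the assertions you offer in place of it are wrong. The inner product $\langle \hat V + \sqrt{\hat t}V - \bar V, v\rangle$ and the bracket $[\hat V + \sqrt{\hat t}V - \bar V, v]$ do \emph{not} ``factor through $\sqrt{\hat t}$ by bilinearity'': the term $\hat V - \bar V$ carries no $\sqrt{\hat t}$. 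What actually happens is this. Writing $(L_{\hat p^{-1}}\circ\eta)(0) = \hat p^{-1}\bar p = (\tilde V,\tilde Z,\tilde t)$ with $\tilde V = (\bar V - \hat V)/\sqrt{\hat t}$ and $\tilde t = \bar t/\hat t$, one has
\[
2s\sqrt{\tilde t}-\langle V-\tilde V,v\rangle = \tfrac{1}{\sqrt{\hat t}}\bigl(2s\sqrt{\bar t}-\langle\hat V+\sqrt{\hat t}V-\bar V,v\rangle\bigr),
\qquad
[V-\tilde V,v] = \tfrac{1}{\sqrt{\hat t}}[\hat V+\sqrt{\hat t}V-\bar V,v],
\]
so
\[
D_{\os}^{L_{\hat p^{-1}}\circ\eta}(V,Z,t) = \frac{1}{t}\cdot\frac{1}{\hat t}\Bigl(\bigl(2s\sqrt{\bar t}-\langle\hat V+\sqrt{\hat t}V-\bar V,v\rangle\bigr)^2+\bigl\|[\hat V+\sqrt{\hat t}V-\bar V,v]\bigr\|^2\Bigr)
= D_{\os}^{\eta}\bigl(L_{\hat p}(V,Z,t)\bigr),
\]
because the last coordinate of $L_{\hat p}(V,Z,t)$ is $\hat t t$. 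The factor $1/\hat t$ produced on the right matches the $1/\hat t$ coming from the denominator on the left, and they cancel: there is \emph{no} overall factor $\hat t$. The same cancellation appears in your limit argument: the left side of the rescaled identity tends to $\bar t\,D_{\os}^{\eta}\circ L_{\hat p}$, the right side to $\hat t\cdot(\bar t/\hat t)\,D_{\os}^{L_{\hat p^{-1}}\circ\eta}=\bar t\,D_{\os}^{L_{\hat p^{-1}}\circ\eta}$. So the correct identity is $D_{\os}^{\eta}\circ L_{\hat p}=D_{\os}^{L_{\hat p^{-1}}\circ\eta}$, which suggests the prefactor $\hat t$ in the proposition as printed is a misprint (this does not affect the corollary on focal varieties, since multiplication by the positive constant $\hat t$ would not change the zero set in any case). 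Your writeup, by asserting that the factors ``conspire to produce precisely the factor $\hat t$'' and citing a nonexistent factorisation, papers over exactly the part of the argument that needed to be checked.
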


The following proposition expresses the function  $D_{\os}^{\eta}$ in terms of the Euclidean distance function.
\begin{proposition}
   Let $\eta$ be the parabola shaped pregeodesic curve considered in Proposition \ref{prop:7.2} (ii). Then we have 
   \[
   t D_{\os}^{\eta}(V,Z,t)=(2s\sqrt{\bar t}-\langle V-\bar V, v\rangle)^2+\left\|[V-\bar V, v]\right\|^2=\|v\|^2 \delta \left((V,Z,t),\mathcal F_{\os}^{\eta}\right)^2,
   \]
   where $\delta(p,\mathcal F_{\os}^{\eta})$ denotes the Euclidean distance of a point $p$ from the focal variety $F_{\os}^{\eta}$. 
\end{proposition}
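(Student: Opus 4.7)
The first equality is immediate from the definition of $D_{\os}^{\eta}$, so everything rests on the second one. My plan is to compute the Euclidean distance $\delta((V,Z,t),\mathcal F_{\os}^{\eta})$ directly by exploiting the fact (established in Proposition \ref{prop:7.2}(ii)) that $\mathcal F_{\os}^{\eta}$ is the intersection of the half-space with a genuine affine subspace of $\mathfrak v\oplus\mathfrak z\oplus\mathbb R$, and then match the result against the algebraic expression on the left.

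First, I would use the orthogonal decomposition \eqref{eq:J_zv_perp} to write
\[
\mathfrak v\oplus\mathfrak z\oplus\mathbb R = \bigl(\mathbb R v\oplus J_{\mathfrak z}v\bigr)\obot\bigl((\ker_{\mathfrak v}(\ad v)\cap v^{\perp})\oplus\mathfrak z\oplus\mathbb R\bigr).
\]
By Proposition \ref{prop:7.2}(ii), $\mathcal F_{\os}^{\eta}$ is the intersection of the half-space with the translate of the second summand by $\bar V+(2s\sqrt{\bar t}/\|v\|^2)v$ (the $Z$ and $t$ coordinates being free). Therefore, provided the foot of the perpendicular from $(V,Z,t)$ lies in the half-space (which it does, since the $Z,t$ coordinates are preserved under orthogonal projection), $\delta((V,Z,t),\mathcal F_{\os}^{\eta})^2$ equals the squared norm of the orthogonal projection of $(V-\bar V)-(2s\sqrt{\bar t}/\|v\|^2)v$ onto $\mathbb R v\oplus J_{\mathfrak z}v$.

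Next, I would decompose $V-\bar V = \lambda v + W_\perp + W_J$ with $\lambda\in\mathbb R$, $W_\perp\in\ker_{\mathfrak v}(\ad v)\cap v^{\perp}$, and $W_J\in J_{\mathfrak z}v$, so that $\lambda=\langle V-\bar V,v\rangle/\|v\|^2$ and $W_J = P_v(V-\bar V)$. The projection onto $\mathbb R v\oplus J_{\mathfrak z}v$ of $(V-\bar V)-(2s\sqrt{\bar t}/\|v\|^2)v$ is $\bigl(\lambda - 2s\sqrt{\bar t}/\|v\|^2\bigr)v + W_J$, whose squared Euclidean norm is
\[
\frac{(\langle V-\bar V,v\rangle - 2s\sqrt{\bar t})^2}{\|v\|^2} + \|P_v(V-\bar V)\|^2.
\]
The key identity I would invoke is the one preceding Corollary \ref{cor:P_V(V_1)}, namely $\|[v,W]\|^2=\|v\|^2\|P_v(W)\|^2$, applied to $W=V-\bar V$. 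Multiplying the displayed squared distance by $\|v\|^2$ and using this identity yields exactly $(2s\sqrt{\bar t}-\langle V-\bar V,v\rangle)^2+\|[V-\bar V,v]\|^2$, which is $tD_{\os}^{\eta}(V,Z,t)$ by definition.

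The only point that requires a moment's care is verifying that the nearest point actually lies in the half-space (so that Euclidean distance to $\mathcal F_{\os}^{\eta}$ really equals Euclidean distance to the ambient affine subspace), but since the orthogonal projection fixes the $\mathfrak z$ and $\mathbb R$ coordinates and $t>0$ for the original point, this is automatic. I do not anticipate any substantive obstacle; the proof is essentially a tidy application of the orthogonal decomposition of $\mathfrak v$ around $v$ together with the bracket-projection identity.
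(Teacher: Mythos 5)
Your proof is correct and follows essentially the same route as the paper: both compute the squared Euclidean distance as the squared norm of the orthogonal projection of the displacement vector onto $\mathbb R v\oplus J_{\mathfrak z}v$. The only cosmetic difference is that you convert $\|v\|^2\|P_v(V-\bar V)\|^2$ to $\|[V-\bar V,v]\|^2$ by citing the lemma preceding Corollary \ref{cor:P_V(V_1)}, whereas the paper expands directly in the orthonormal basis $\bar v, J_1\bar v,\dots,J_m\bar v$ and uses the defining relation $\langle J_\alpha v, W\rangle=\langle [v,W],F_\alpha\rangle$; your remark that the orthogonal projection fixes the $(Z,t)$-coordinates (so the foot of the perpendicular stays in the half-space) is a small but legitimate clarification the paper leaves implicit.
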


\begin{proof}  Choose an orthonormal basis $F_1,\dots,F_m$ of $\mathfrak z$ and set $J_{\alpha}=J_{F_{\alpha}}$. Then $\bar v,J_{1}\bar v,\dots, J_{m}\bar v$ is an orthonormal basis of $J_{\mathfrak z}v \oplus \mathbb R v$, where $\bar v=v/\|v\|$. Thus,
\begin{align*}
\|v\|^2\delta (V,\mathcal F_{\os}^{\eta})^2 &= \left\langle V-\bar V-\frac{2s\sqrt{\bar t}}{\|v\|^2}v,v\right\rangle^2 +\sum_{\alpha=1}^m\left\langle V-\bar V-\frac{2s\sqrt{\bar t}}{\|v\|^2}v, J_{\alpha}v\right\rangle^2\\&= \left(\langle V-\bar V,v\rangle-2s\sqrt{\bar t}\right)^2 +\sum_{\alpha=1}^m\left\langle V-\bar V, J_{\alpha}v\right\rangle^2.
\end{align*}
The proof is completed by the identity
\[
\sum_{\alpha=1}^m\langle V-\bar V, J_{\alpha}v\rangle^2=\sum_{\alpha=1}^m\langle [V-\bar V,v], F_{\alpha}\rangle^2=\|[V-\bar V,v]\|^2.\qedhere
\]
\end{proof}
\begin{rem}
 Extending the orthonormal system $E_1=J_{1}\bar v,\dots, E_m=J_{m}\bar v, E_{m+1}=\bar v$ to an orthonormal basis $E_1,\dots,E_n$ of $\mathfrak v$, we obtain that $D_{\os}^{\gamma}(V,Z,t)=\frac{1}{t}\sum_{i=1}^{m+1} \langle V,E_i\rangle^2$ is an isoparametric function. More generally, if $E_1,\dots,E_n$ is an arbitrary orthonormal basis of $\mathfrak v$ and $I\subseteq \{1,\dots,n\}$ is an arbitrary subset, then the function $F(V,Z,t)=\frac{1}{t}\sum_{i\in I} \langle V,E_i\rangle^2$ is also isoparametric, since one can prove the identities 
 \begin{equation}\label{eq:F_isoparametric}
    \Delta F=\left(m+\frac n2+1\right)F+2|I| \quad \text{and} \quad\|\nabla F\|^2=F^2+4F
 \end{equation}
by a computation analogous to the proof of Theorem \ref{thm:D_x_0_isoparametric}.
The focal variety of $F$ has the form $\exp(\mathfrak w\oplus\mathfrak z\oplus \mathfrak a)$, where $\mathfrak w$ is the linear subspace spanned by $\{E_i\mid i\notin I\}$. Isoparametric functions of this type and the corresponding isoparametric hypersurfaces were studied by J.~C.~D\'iaz-Ramos and M.~Dom\'inguez-V\'azquez \cite{Ramos_Vazquez}.
 \end{rem}

\section{Geodesic curves orthogonal to a focal variety\label{sec:orthogonal_geodesics}}
 
\begin{theorem}
Let $\mathcal F_{x_0}$ be the focal variety of the function $D_{x_0}$ for a point $x_0=(V_0,Z_0,t_0)$ with $t_0<0$. Then the prolongation of any geodesic curve that intersects  $\mathcal F_{x_0}$ orthogonally goes through the point $x_0$. 
Similarly, the prolongation of any geodesic curve intersecting  $\mathcal F^{\eta}_{\os}$ orthogonally goes through the point $\os$.
\end{theorem}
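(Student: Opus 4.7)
The plan is to exploit the invariance of the family of focal varieties under left translations in order to reduce to the case where the orthogonal intersection point is the identity element $e=(0,0,1)\in S$. By Corollary \ref{cor:left_transl_of_focal} and Proposition \ref{prop:left_transl_of_D*}, left translations carry $\mathcal F_{x_0}$ and $\mathcal F^\eta_{\os}$ to focal varieties of the same type; moreover, each map $L_p$ extends by \eqref{eq:left_translation} to an affine transformation of $\mathfrak n\oplus\mathbb R$ whose linear part preserves the $\mathfrak a$-direction, hence it fixes $\os$ and sends the prolongation of a geodesic to the prolongation of its image. Thus we may assume the geodesic issues from $e$ with unit initial velocity $\xi=v+z+sA\in T_eS\cong\mathfrak s$ and meets the relevant focal variety orthogonally at $e$.

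For the first claim, $e\in\mathcal F_{x_0}$ together with \eqref{eq:F_x_0} forces $Z_0=0$ and $t_0=-1-\|V_0\|^2/4$. Parameterizing $\mathcal F_{x_0}$ locally by $V\in\mathfrak v$ via \eqref{eq:F_x_0} and differentiating at $V=0$, I find the tangent space at $e$ to consist of the vectors $U-\tfrac{1}{2}[U,V_0]+\tfrac{1}{2}\langle U,V_0\rangle A$, $U\in\mathfrak v$. Orthogonality of $\xi$ with these vectors, after using $\langle z,[U,V_0]\rangle=-\langle U,J_zV_0\rangle$, collapses to the single relation
\[
v=-\tfrac{1}{2}J_zV_0-\tfrac{s}{2}V_0.
\]
Substituting this into the formula \eqref{eq:gamma_infty} for $\gamma(\infty)$ and using $J_z^2=-\|z\|^2\mathrm{Id}_{\mathfrak v}$ together with $\langle J_zV_0,V_0\rangle=0$, the $\mathfrak v$-component of $\gamma(\infty)$ simplifies to $V_0$. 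The same relation yields $\|v\|^2=\tfrac{\|V_0\|^2}{4}(s^2+\|z\|^2)$, which together with $\|\xi\|^2=1$ gives $s^2+\|z\|^2=(1+\|V_0\|^2/4)^{-1}$, so that the $\mathfrak a$-component of $\gamma(\infty)$ evaluates to $-(1+\|V_0\|^2/4)=t_0$. Hence $\gamma(\infty)=(V_0,0,t_0)=x_0$. The degenerate case $\|v\|=1$ (in which \eqref{eq:gamma_infty} gives $\gamma(\infty)=\os$) is excluded since it would force the inconsistency $1+\|V_0\|^2/4=\|V_0\|^2/4$.

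For the second claim, Proposition \ref{prop:7.2}(ii) exhibits $\mathcal F^\eta_{\os}$ as the intersection with the half-space of an affine translate of $(\ker_{\mathfrak v}(\ad v_\eta)\cap v_\eta^{\perp})\oplus\mathfrak z\oplus\mathbb R$, where $v_\eta$ denotes the $\mathfrak v$-part of the initial velocity of $\eta$. At $e$, the identifications $\partial_{v_i}|_e=\mathbf E_i|_e$, $\partial_{z_\alpha}|_e=\mathbf F_\alpha|_e$, $\partial_t|_e=\mathbf A|_e$ make the tangent space $(\ker_{\mathfrak v}(\ad v_\eta)\cap v_\eta^{\perp})\oplus\mathfrak z\oplus\mathfrak a\subset\mathfrak s$, whose orthogonal complement in $\mathfrak s$ is $\mathbb R v_\eta\oplus J_{\mathfrak z}v_\eta\subset\mathfrak v$ by \eqref{eq:J_zv_perp}. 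Thus orthogonality of $\xi$ forces $z=0$ and $s=0$, and by Theorem \ref{thm:geodesic_types} the vanishing $z=0$ is precisely the condition under which the prolongation of a geodesic passes through $\os$ (traversing a parabola or a line parallel to $\mathfrak a$ in the half-space model).

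I expect the main technical hurdle to be the $\mathfrak a$-component computation for $\gamma(\infty)$ in the first part, which requires a careful interplay between the unit-length constraint on $\xi$ and the norm identity $\|v\|^2=\tfrac{\|V_0\|^2}{4}(s^2+\|z\|^2)$. The remaining ingredients are routine manipulations with the Clifford relation $J_z^2=-\|z\|^2\mathrm{Id}_{\mathfrak v}$ and the skew-adjointness of the operators $J_z$.
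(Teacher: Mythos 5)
Your proof is correct and follows essentially the same path as the paper: reduce to intersection at the identity by a left translation, derive the orthogonality condition $v=-\tfrac12(sV_0+J_zV_0)$ from the tangent space $T_e\mathcal F_{x_0}$, and substitute into $\gamma(\infty)$ using the Clifford relation $J_z^2=-\|z\|^2\mathrm{Id}_{\mathfrak v}$ together with the unit-length constraint to recover $x_0$. The second case is handled as in the paper by noting that the normal direction at $e$ lies in $J_{\mathfrak z}v_\eta\oplus\mathbb R v_\eta\subseteq\mathfrak v$, forcing $z=0$ and hence the prolongation to pass through $\os$.
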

\begin{proof}
Consider a focal variety $\mathcal F_{x_0}$  and a geodesic curve intersecting it orthogonally at the point $p$.  Applying the left translation $L_{p^{-1}}$ to the configuration of the focal variety and the geodesic, we see by Corollary \ref{cor:left_transl_of_focal}, that  it suffices to prove the theorem for the case $p=e\in \mathcal F_{x_0}$.
Condition $e\in \mathcal F_{x_0}$ is equivalent to the restrictions 
\begin{equation}\label{eq:passing_through_e}
    t_0=-\tfrac{1}{4}\left\|V_0\right\|^2-1,\qquad Z_0=0
\end{equation}
on the point $x_0=(V_0,Z_0,t_0)$. The system of equations of such a focal submanifold $\mathcal F_{x_0}$ is
\begin{equation}\label{eq:minsok1}
 t=1-\tfrac{1}{4}\left\|V\right\|^2+\tfrac{1}{2}\langle V,V_0\rangle,\qquad Z=-\tfrac12[V,V_0],
\end{equation}
from which its tangent space at the identity is 
\begin{equation}\label{eq:T_eminsok1}
T_e\mathcal F_{x_0}=\left\{(v',z',s')\in \mathfrak v\oplus \mathfrak z\oplus \mathbb R \,\,\mid \,\, s'=\tfrac{1}{2}\langle v',V_0\rangle,\,\,   z'=-\tfrac{1}{2} [v',V_0]\right\}.
\end{equation}
The prolongation of a geodesic starting from $e$ is parameterized by a map $\gamma$ of the form \eqref{eq:geodesic}. The initial velocity of $\gamma$ is $\gamma'(0)=2(v,z,s)$. The pregeodesic $\gamma$ intersects $\mathcal F_{x_0}$ orthogonally if and only if 
\[
0=\left\langle 2(v,z,s),\left(v',-\tfrac{1}{2} [v',V_0],\tfrac{1}{2}\langle v',V_0\rangle\right)\right\rangle=\langle v',2v+sV_0\rangle-\langle z,[v',V_0]\rangle=\langle v',2v+sV_0+J_zV_0\rangle
\]
for all $v'\in \mathfrak v$.  This is equivalent to the equation  
\begin{equation}\label{eq:orthogonality1}
    v=-\tfrac{1}{2}(sV_0+J_zV_0),
\end{equation}
 consequently $s^2+\|z\|^2\neq 0$, otherwise we would have $(v,z,s)=0$, contradicting $\|(v,z,s)\|=1$. Using the assumption $\|v\|^2+\|z\|^2+s^2=1$, equation \eqref{eq:orthogonality1} gives
 \begin{equation}\label{eq:orthogonality2}
    \frac{1}{s^2+\|z\|^2}= 1+\tfrac{1}{4}\|V_0\|^2=-t_0.
\end{equation}
 Combination of equation \eqref{eq:gamma_infty} for the case $\|v\|\neq 1$, and equations \eqref{eq:orthogonality1}, \eqref{eq:orthogonality2},\eqref{eq:passing_through_e} provides
\begin{align*}
\gamma(\infty)
=\left(\frac{(s^2V_0+sJ_zV_0)-(sJ_zV_0-\|z\|^2V_0)}{{s}^2+\|z\|^2},0,-\frac{1}{{s}^2+\|z\|^2} \right)=(V_0,Z_0,t_0)=x_0.
\end{align*}

Consider now a focal variety $\mathcal F_{\os}^{\eta}$ and a geodesic curve meeting it orthogonally. Referring to Proposition \ref{prop:left_transl_of_D*}, we may assume that they meet at the identity element $e$. Let us write $\eta$ as the left translate $L_{(\bar V,\bar Z,\bar t)}\circ \gamma$ of a pregeodesic $\gamma$ of the type \eqref{eq:geodesic} with initial velocity $\gamma'(0)=2(v,0,s)$. Then $e\in \mathcal F_{\os}^{\eta}$ holds if and only if $[\bar V,v]=0$ and $\langle \bar V,v\rangle=-2s\sqrt{\bar t}$. In this case, the tangent space $T_e\mathcal F_{\os}^{\eta}$ is the space $(J_{\mathfrak z}v\oplus \mathbb R v)^{\perp}$. Then any geodesic curve intersecting $\mathcal F_{\os}^{\eta}$ orthogonally at $e$ starts with initial velocity in $J_{\mathfrak z}v\oplus\mathbb R v\subseteq \mathfrak v$. The prolongation of any such geodesic goes through $\os$ by Theorem \ref{thm:geodesic_types}.
\end{proof}

\begin{theorem}
 If the prolongation $\eta$ of a unit speed geodesic curve $\hat \eta$ goes through the point $x_0$ with negative last coordinate, then the geodesic $\hat \eta$ intersects the focal variety $\mathcal F_{x_0}$ at a point $p$ orthogonally. This point $p$ is the unique intersection points of $\hat \eta$  with $\mathcal F_{x_0}$. Furthermore, the points $x_0$, $p$, and the two intersection points of $\mathrm{im}\, \eta$ with the boundary of the half-space model form a harmonic range on the quadric $\mathrm{im}\, \eta$ (which may degenerate to a straight line), i.e., the cross-ratio of these four points with respect to the quadric  $\mathrm{im}\, \eta$  is $-1$.  \end{theorem}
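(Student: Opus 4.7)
The plan is to identify the intersection point explicitly as $p:=\gamma(1/\theta^*)$, where $\gamma$ is a rational parameterization of the prolongation $\eta$ and $\theta^*$ is the parameter value at which $\eta$ passes through $x_0$; once this candidate is written down, the harmonic cross-ratio is essentially tautological, and the orthogonality together with the membership $p\in\mathcal F_{x_0}$ follow from applying the computation of the preceding theorem in the reverse direction.

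By Corollary \ref{cor:left_transl_of_focal} I can apply a left-translation to assume $\hat\eta(0)=e$, so that $\hat\eta(t)=\gamma(\tanh(t/2))$ with $\gamma$ as in \eqref{eq:geodesic} and unit initial data $(v,z,s)$. The hypothesis gives $\theta^*\in\mathbb R\mathbf P^1$ with $\gamma(\theta^*)=x_0$, and since the last coordinate of $\gamma(\theta)$ equals $(1-\theta^2)/\chi(\theta)$ with $\chi>0$, the condition $t_0<0$ forces $|\theta^*|>1$. Setting $\theta_p:=1/\theta^*\in(-1,1)$ and $p:=\gamma(\theta_p)$, the key observation is that re-basing the unit-speed parameterization at $p$ corresponds to the Möbius change of variable $\theta\mapsto(\theta-\theta_p)/(1-\theta\theta_p)$, which fixes $\pm 1$ and interchanges $\theta_p\leftrightarrow 0$, $\theta^*\leftrightarrow\infty$. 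Composing with $L_{p^{-1}}$ produces a pregeodesic of the form \eqref{eq:geodesic} starting at $e$ with some unit initial velocity $(v',z',s')$, whose prolongation passes through $L_{p^{-1}}(x_0)$ at parameter $\infty$. Formula \eqref{eq:gamma_infty} then expresses $L_{p^{-1}}(x_0)$ explicitly in terms of $(v',z',s')$, and a short calculation using $\|v'\|^2+\|z'\|^2+(s')^2=1$ (which is exactly the algebra appearing in the proof of the preceding theorem, now read as sufficient conditions) verifies both \eqref{eq:passing_through_e}, giving $e\in\mathcal F_{L_{p^{-1}}(x_0)}$, and \eqref{eq:orthogonality1}, giving the orthogonality of $(v',z',s')$ to $T_e\mathcal F_{L_{p^{-1}}(x_0)}$. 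Translating back by $L_p$ via Corollary \ref{cor:left_transl_of_focal} yields the required orthogonal intersection at $p$.

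For uniqueness, $\hat\eta$ is now a radial geodesic of the isoparametric foliation of $D_{x_0}$, so by Theorem \ref{thm:D_x_0_isoparametric} the function $f(t):=D_{x_0}(\hat\eta(t))$ satisfies $(f')^2=f(f-4t_0)$, which rules out a second zero because at an intermediate maximum between two zeros one would have $f'=0$ while $f>0$ and $f-4t_0>0$. The harmonic cross-ratio follows since $\theta$ is a birational parameterization of the quadric $\mathrm{im}\,\eta$ and the two intersection points of $\mathrm{im}\,\eta$ with the ideal boundary correspond to $\theta=\pm 1$ (possibly equal to $\os$ in the straight-line case); substituting $\theta_p=1/\theta^*$ into the cross-ratio of the four parameter values gives
\[
\frac{(\theta^*-1)(\theta_p+1)}{(\theta^*+1)(\theta_p-1)}=\frac{(\theta^*-1)(1+\theta^*)}{(\theta^*+1)(1-\theta^*)}=-1.
\]
The principal difficulty is handling the degenerate cases of Theorem \ref{thm:geodesic_types}(ii)--(iii) in the reduction above, where one might worry that $\|v'\|=1$ (which would invalidate \eqref{eq:gamma_infty}); this is ruled out because $\|v'\|=1$ would force $L_{p^{-1}}(x_0)=\os$, contradicting $x_0\in\mathfrak s$.
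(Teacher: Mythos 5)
Your proof is correct and follows essentially the same route as the paper: reduce by reparameterization and left translation to the normalized situation where the foot point is $e$ and $x_0=\gamma(\infty)$, then verify conditions \eqref{eq:passing_through_e} and \eqref{eq:orthogonality1} from formula \eqref{eq:gamma_infty}, and deduce the harmonic cross-ratio from the identity $[\theta^*,1/\theta^*;1,-1]=-1$ on the birational parameterization. Two minor stylistic differences are worth noting: you obtain $\theta_p=1/\theta^*\in(-1,1)$ directly from $t_0<0$ via the sign of $(1-\theta^2)/\chi(\theta)$, whereas the paper deduces $p\in\mathfrak n\times\mathbb R_+$ from the separation property of a harmonic range; and your uniqueness argument uses the ODE $(f')^2=f(f-4t_0)$ for $f=D_{x_0}\circ\hat\eta$ rather than the paper's observation that the distance of $\hat\eta(t)$ from $\mathcal F_{x_0}$ is $|t-t_0|$. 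Both uniqueness arguments implicitly invoke the same fact from the Wang--Ge--Tang theory (Theorem~\ref{Wang}), namely that a geodesic hitting the focal variety orthogonally remains normal to all level sets, so that $\hat\eta'\parallel\nabla D_{x_0}$ along $\hat\eta$; without this, one only gets $(f')^2\le f(f-4t_0)$ from Cauchy--Schwarz, and the ODE argument would not close. It would be cleaner to make this step explicit.
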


\begin{proof}
 The points of $\mathrm{im}\, \eta$ in the boundary hyperplane of our model are $\eta(\pm 1)$. As $\eta$ is a birational equivalence, $\eta$ preserves cross-ratio. The four-tuple  $\big(\theta,1/\theta,1,-1\big)$ is a harmonic range in $\mathbb R\mathbf P^1$, therefore, if we write $x_0$ in the form $\eta(\theta)$, then the unique point $p\in \mathrm{im}\, \eta$ for which the cross ratio of the points $\big(x_0,p,\eta(1),\eta(-1)\big)$ with respect to $\mathrm{im}\, \eta$ is equal to 
 $-1$ is $p=\eta(1/\theta)$.
  Since $\big(x_0,p,\eta(1),\eta(-1)\big)$ is a harmonic range, the points $\eta(1)$ and $\eta(-1)$ separate the points $x_0$ and $p$ in $\mathrm{im}\, \eta$, which implies that $p\in \mathfrak n\times \mathbb R_+$.

We show that the focal variety $\mathcal F_{x_0}$ passes through $p$, and that $\eta$ is orthogonal to $\mathcal F_{x_0}$ at $p$. The statement does not depend on the choice of the parameterization of the geodesic and its prolongation. Thus, we may assume without loss of generality, that $\eta(0)=p$ and $\eta(\infty)=x_0$. The statement is also invariant under left translations by Corollary \ref{cor:left_transl_of_focal}, therefore we may also assume that $p=e$, and $\eta=\gamma$, where $\gamma$ is the pregeodesic given by equation \eqref{eq:geodesic}. Since $x_0\neq \os$, equation \eqref{eq:gamma_infty} yields $\|v\|\neq 1$ and 
 \begin{equation*}
  x_0=(V_0,Z_0,t_0)=\gamma(\infty)=
       \left(-\dfrac{2s}{{s}^2+\|z\|^2}v+\dfrac{2}{{s}^2+\|z\|^2}J_{z}v,0,-\dfrac{1}{{s}^2+\|z\|^2}\right).
 \end{equation*}
 A simple computation shows that the components of $x_0$ satisfy the equations in \eqref{eq:passing_through_e}, therefore $\mathcal F_{x_0}$ is passing through $e$. The orthogonality condition \eqref{eq:orthogonality1} is also fulfilled, hence $\gamma$ intersects $\mathcal F_{x_0}$ orthogonally.

 If $t_0$ is chosen so that $\tanh(t_0/2)={1}/{\theta}$, then  $\hat\eta(t_0)=p$ and the distance of $\hat\eta(t)$ from $\mathcal F_{x_0}$ is equal to $|t-t_0|$, which implies that $p$ is the only intersection point of $\hat\eta$ and the focal variety $\mathcal F_{x_0}$.
 \end{proof}
\begin{corollary}
    If $\hat\eta$ is a geodesic curve, then for each point $p$ of $\hat\eta$, there  is a unique focal variety of the form $\mathcal F_{x_0}$ or $\mathcal F_{\os}^{\eta}$ that meets $\hat \eta$ orthogonally at $p$, where $\eta$ is the prolongation of $\hat \eta$.
\end{corollary}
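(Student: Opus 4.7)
The plan is to combine the two preceding theorems of this section with the fact that the prolongation $\gamma\colon \mathbb R\mathbf{P}^1\to \mathbf{P}(\mathfrak s\oplus \mathbb R)$ is a birational equivalence and therefore preserves the cross-ratio on $\im\,\gamma$. By Corollary~\ref{cor:left_transl_of_focal} and Proposition~\ref{prop:left_transl_of_D*}, applying a left translation allows us to reduce to $p=e$; we then parameterize $\hat\eta$ as the unit-speed geodesic $\hat\gamma(t)=\gamma(\tanh(t/2))$ with initial velocity $(v,z,s)\in T_eS$, so that $\gamma(0)=e=p$ and the two boundary points of the half-space model on $\im\,\gamma$ are $\gamma(\pm 1)$. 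Since $(0,\infty;1,-1)=-1$, the harmonic conjugate of $\gamma(0)$ with respect to $\gamma(1),\gamma(-1)$ is $\gamma(\infty)$.

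For existence, formula~\eqref{eq:gamma_infty} splits into two subcases. If $\|v\|\ne 1$, then $x_0\mathrel{\mathop:}=\gamma(\infty)$ is the finite point with last coordinate $-1/(s^2+\|z\|^2)<0$; the second theorem of this section then asserts that $\mathcal F_{x_0}$ meets $\hat\gamma$ orthogonally at $p$. If instead $\|v\|=1$, then $z=0$ and $s=0$, so $\gamma(\infty)=\os$, and Proposition~\ref{prop:7.2}(ii) shows that $\mathcal F_\os^\gamma$ is well defined. Its passage conditions at $(\bar V,\bar Z,\bar t)=(0,0,1)$ reduce to $[0,v]=0$ and $\langle 0,v\rangle=2s\sqrt{\bar t}=0$, both of which hold, so $e\in \mathcal F_\os^\gamma$. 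Since $(T_e\mathcal F_\os^\gamma)^{\perp}=J_{\mathfrak z}v\oplus \mathbb R v$ by the tangent-space description in Proposition~\ref{prop:7.2}(ii), the initial velocity $(v,0,0)$ lies in this complement, which establishes orthogonality.

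For uniqueness, suppose some focal variety of the stated form meets $\hat\gamma$ orthogonally at $p=e$. If it equals $\mathcal F_{x_0'}$, the first theorem of this section places $x_0'\in \im\,\gamma$, and the second theorem identifies $p$ as the harmonic conjugate of $x_0'$; hence $x_0'=\gamma(\infty)$ is forced, which in turn requires $\|v\|\ne 1$ in order for $x_0'$ to be a finite point with negative last coordinate. If instead it equals $\mathcal F_\os^\gamma$, then $v\ne 0$ by Proposition~\ref{prop:7.2}(ii), the passage condition $e\in \mathcal F_\os^\gamma$ forces $s=0$ via $\langle 0,v\rangle=2s\sqrt{\bar t}$, and orthogonality forces $(v,z,s)\in J_{\mathfrak z}v\oplus \mathbb R v\subset\mathfrak v$, yielding $z=0$ and hence $\|v\|=1$. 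The alternatives $\|v\|\ne 1$ and $\|v\|=1$ are mutually exclusive, so exactly one focal variety of the stated form works.

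The only delicate point is the parabolic subcase $z=0$, $v\ne 0$, $s\ne 0$ of Theorem~\ref{thm:geodesic_types}(ii), in which $\im\,\gamma$ does contain $\os$ even though $\gamma(\infty)\ne \os$; one must confirm that $\mathcal F_\os^\gamma$ does not spuriously provide a second candidate. This is immediate from the passage condition $\langle 0,v\rangle=2s$, which fails whenever $s\ne 0$, so that $e\notin \mathcal F_\os^\gamma$ and the sole candidate is $\mathcal F_{\gamma(\infty)}$. With this subcase checked, the argument above yields the corollary.
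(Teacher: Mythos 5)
Your proof is correct and takes the approach that the paper implicitly intends: the corollary is stated without proof, as an immediate consequence of the two preceding theorems of Section 8, and you reconstruct exactly that reasoning. You correctly reduce to $p=e$ by left translation, use the harmonic-range theorem to pin down $\gamma(\infty)$ as the only possible center on $\im\gamma$, split existence by $\|v\|\neq 1$ versus $\|v\|=1$ using equation~\eqref{eq:gamma_infty}, and — importantly — you do not overlook the subcase $z=0$, $v\neq 0$, $s\neq 0$, where $\os\in\im\gamma$ but $\gamma(\infty)\neq\os$; your verification that $e\notin\mathcal F_{\os}^{\gamma}$ via the passage condition $\langle 0,v\rangle=2s\sqrt{\bar t}$ closes that gap cleanly.
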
 
However, in the general case, there can be focal varieties of the form $F_{\os}^{\zeta}\neq F_{\os}^{\eta}$ corresponding to another geodesic curve $\hat \zeta$ such that $F_{\os}^{\zeta}$ and $F_{\os}^{\eta}$ meet $\hat\eta$ at the same point orthogonally. Namely, we will prove that this will happen if and only if the Damek--Ricci space is not symmetric.

\begin{theorem}
    A   Damek--Ricci space is symmetric if and only if it has the property that whenever two focal varieties $F_{\os}^{\eta_1}$ and $F_{\os}^{\eta_2}$ intersect a geodesic orthogonally at the same point, they coincide.
\end{theorem}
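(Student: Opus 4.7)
The plan is to reformulate ``orthogonality at a common point'' as a purely algebraic condition on the subspaces $J_{\mathfrak z}v\oplus \mathbb Rv\subset \mathfrak v$, and then apply the characterization of symmetric Damek--Ricci spaces in Proposition~\ref{prop:J^2_equiv_intersection}. First, using the covariance $L_{\hat p}(\mathcal F_{\os}^{\eta})=\mathcal F_{\os}^{L_{\hat p}\circ\eta}$ from Proposition~\ref{prop:left_transl_of_D*}, I would left-translate so that the common intersection point is $p=e$. The defining equations of Proposition~\ref{prop:7.2}(ii) show that each $\mathcal F_{\os}^{\eta_i}$ is an affine subspace of the half-space model whose direction is $(J_{\mathfrak z}v_i\oplus \mathbb Rv_i)^{\perp}$, where $v_i$ denotes the $\mathfrak v$-component of the initial velocity of the pregeodesic underlying $\eta_i$. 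The existence of a common orthogonal geodesic at $e$ is then equivalent to $W_1\cap W_2\neq\{0\}$, where $W_i:=J_{\mathfrak z}v_i\oplus \mathbb Rv_i$, the initial velocity of that geodesic serving as the witness (and automatically lying in $\mathfrak v$).

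For the forward implication I would assume the space is symmetric and apply Proposition~\ref{prop:J^2_equiv_intersection}(iii) to conclude $W_1=W_2$. The two focal varieties then become affine subspaces of the same direction sharing the point $e$, hence they coincide.

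For the converse I would argue by contrapositive. Assuming non-symmetry, Proposition~\ref{prop:J^2_equiv_intersection} supplies unit vectors $v_1,v_2\in \mathfrak v$ with $W_1\neq W_2$ but with a common non-zero vector $w\in W_1\cap W_2$. Taking $\gamma_i$ to be the pregeodesic~\eqref{eq:geodesic} with initial velocity $(v_i,0,0)$, Theorem~\ref{thm:geodesic_types}(ii) gives a parabola through $e$, and Proposition~\ref{prop:7.2}(ii) applied with $\bar V=0$, $\bar Z=0$, $\bar t=1$, $s=0$ shows that the varieties $\mathcal F_{\os}^{\gamma_i}$ pass through $e$ with distinct tangent spaces $W_i^{\perp}$. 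The geodesic issuing from $e$ with initial velocity $w$ then meets both $\mathcal F_{\os}^{\gamma_i}$ orthogonally, providing the desired counterexample. The main substantive ingredient is Proposition~\ref{prop:J^2_equiv_intersection}; the only technical point is to verify that every direction in $\mathfrak v\setminus\{0\}$ is realizable as the $\mathfrak v$-component of a parabola-shaped pregeodesic whose $\mathcal F_{\os}$-variety passes through $e$, which the choice $s=0$, $\bar V=0$ trivializes by making the constraints $[\bar V,v]=0$ and $\langle \bar V,v\rangle=-2s\sqrt{\bar t}$ automatic.
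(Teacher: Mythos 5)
Your proposal is correct and takes essentially the same route as the paper: left-translate the common intersection point to $e$ via Proposition~\ref{prop:left_transl_of_D*}, identify $T_e\mathcal F_{\os}^{\eta_i}=(J_{\mathfrak z}v_i\oplus\mathbb Rv_i)^{\perp}$ from Proposition~\ref{prop:7.2}(ii), observe that a common orthogonal geodesic at $e$ exists precisely when $(J_{\mathfrak z}v_1\oplus\mathbb Rv_1)\cap(J_{\mathfrak z}v_2\oplus\mathbb Rv_2)\neq\{0\}$, and conclude via the equivalence (i)$\iff$(iii) of Proposition~\ref{prop:J^2_equiv_intersection}. Your converse direction just makes explicit (with the choices $\bar V=0$, $\bar Z=0$, $\bar t=1$, $s=0$) the construction of the counterexample pregeodesics that the paper leaves implicit.
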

\begin{proof} 
By Proposition \ref{prop:left_transl_of_D*}, we may assume that the two focal varieties intersect the geodesic at $e$. Then Proposition \ref{prop:7.2} (ii) gives that $F_{\os}^{\eta_1}=F_{\os}^{\eta_2}$ if and only if their tangent spaces at $e$ are equal, that is $(J_{\mathfrak z}v_1\oplus \mathbb R v_1)^{\perp}=(J_{\mathfrak z}v_2\oplus \mathbb R v_2)^{\perp}$, where $v_1$ and $v_2$ are non-zero elements of $\mathfrak v$, related to $\eta_1$ and $\eta_2$ as in Proposition \ref{prop:7.2} (ii).  The existence of a geodesic meeting both $F_{\os}^{\eta_1}$ and $F_{\os}^{\eta_2}$ orthogonally at $e$ is equivalent to the existence of a non-zero vector $w\in (J_{\mathfrak z}v_1\oplus \mathbb R v_1)\cap(J_{\mathfrak z}v_2\oplus \mathbb R v_2)$ serving for the initial velocity of such a geodesic curve. Thus, the condition that the focal varieties $F_{\os}^{\eta_1}$ and $F_{\os}^{\eta_2}$ coincide if there is a geodesic which meets both of them orthogonally at a common point is equivalent to condition  (iii) of Proposition \ref{prop:J^2_equiv_intersection}, therefore, it is equivalent to the space being symmetric.
\end{proof}

\section{\texorpdfstring{Totally geodesic focal varieties and the $J^2$-condition}{Totally geodesic focal varieties and the J2-condition}\label{sec:totally_geodesic}}

Focal varieties of type $\mathcal F_{x_0}$ are not totally geodesic unless the Damek--Ricci space is  symmetric, but they  have at least one point $p\in \mathcal F_{x_0}$ such that $\mathcal F_{x_0}$ is the exponential image of a $T_p\mathcal F_{x_0}$. It turns out that the set of such points of $\mathcal F_{x_0}$ is homeomorphic to the set of vectors in $\mathfrak v$ satisfying the $J^2$-condition, therefore, the existence of more than one such points is equivalent to $\dim \mathfrak z\in \{0,1,3,7\}$. Moreover, existence of a totally geodesic focal variety $\mathcal F_{x_0}$ implies that the space is symmetric and that all the focal varieties are totally geodesic. The key to prove these statements is the following proposition.
\begin{proposition}\label{prop:tot_geod_e}
 Let $x_0=(V_0,Z_0,t_0)=(V_0,0,-1-\frac{1}{4}\|V_0\|^2)$ be a point satisfying equations \eqref{eq:passing_through_e}, guaranteeing that $\mathcal F_{x_0}$ goes through the identity $e$. Then $\mathcal F_{x_0}$ contains all geodesic curves starting from $e$ with initial velocity belonging to $T_e\mathcal F_{x_0}$ if and only if $V_0$ satisfies the $J^2$-condition.
\end{proposition}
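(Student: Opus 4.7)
The plan is to substitute the rational parameterization \eqref{eq:geodesic} of a geodesic through $e$, with unit initial velocity $(v,z,s)\in T_e\mathcal F_{x_0}$ (so $s = \tfrac{1}{2}\langle v,V_0\rangle$ and $z = -\tfrac{1}{2}[v,V_0]$ by \eqref{eq:T_eminsok1}), into the two defining equations \eqref{eq:minsok1} of the focal variety, and extract the algebraic condition on $V_0$ that makes both of them hold identically in $\theta\in(-1,1)$. The condition that emerges will be manifestly scale-invariant in $(v,z,s)$, so it is enough to test unit vectors.

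For the first equation of \eqref{eq:minsok1}, the identities $\|J_zv\|=\|z\|\|v\|$, $\langle v,J_zv\rangle=0$, and $\langle J_zv,V_0\rangle=\langle[v,V_0],z\rangle=-2\|z\|^2$ coming from \eqref{eq:ad_J} yield $\|V\|^2 = 4\theta^2\|v\|^2/\chi(\theta)$ and $\langle V,V_0\rangle = \bigl(4s\theta - 4\theta^2(s^2+\|z\|^2)\bigr)/\chi(\theta)$; applying the unit-norm relation $\|v\|^2+\|z\|^2+s^2=1$ collapses $1-\tfrac14\|V\|^2+\tfrac12\langle V,V_0\rangle$ to $(1-\theta^2)/\chi(\theta) = t$, so the first equation holds identically and imposes no restriction. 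Expanding $[V,V_0]$ bilinearly and using $[v,V_0]=-2z$, the second equation $Z = -\tfrac12[V,V_0]$ reduces, after routine simplification, to the $\theta$-independent identity
\[
[V_0,J_zv] = 2sz. \qquad (\star)
\]

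The decisive step is to characterize the vectors $v$ for which $(\star)$ holds. Decompose $v$ via \eqref{eq:J_zv_perp} as $v = \lambda V_0 + v_\perp + J_{z'}V_0$ with $v_\perp\in\ker_{\mathfrak v}(\ad V_0)\cap V_0^\perp$ and some $z'\in\mathfrak z$. Applying Lemma \ref{lem:J_XUV} with $U=V=V_0$ gives $[V_0,J_{z'}V_0] = \|V_0\|^2z'$, hence $z = \tfrac12\|V_0\|^2 z'$ and $s = \tfrac12\lambda\|V_0\|^2$. Expanding $J_zv$ linearly, the $\lambda V_0$ summand contributes exactly $2sz$ to $[V_0,J_zv]$ by the same lemma, while the $J_{z'}V_0$ summand contributes $0$ because $J_zJ_{z'}V_0 = -\tfrac{1}{2}\|V_0\|^2\|z'\|^2 V_0$ is a scalar multiple of $V_0$. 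Consequently $(\star)$ is equivalent to $[V_0,J_{z'}v_\perp] = 0$ for every $v_\perp\in\ker_{\mathfrak v}(\ad V_0)\cap V_0^\perp$ and every $z'\in\mathfrak z$, which by Corollary \ref{cor:J2} is precisely the $J^2$-condition for $V_0$.

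The main obstacle is the algebraic bookkeeping: tracking $\theta$-powers carefully enough to collapse the second equation to $(\star)$, and handling the three summands of $v$ separately so that the $J_{z'}V_0$ cross-term cancels, leaving only the $J^2$-content on $v_\perp$. Both cancellations rest essentially on the Clifford identity $J_z^2 = -\|z\|^2\mathrm{Id}_{\mathfrak v}$ and on Lemma \ref{lem:J_XUV}.
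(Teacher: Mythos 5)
Your proof is correct and follows essentially the same route as the paper: reduce the second focal-variety equation to a $\theta$-independent bilinear condition, decompose $v$ according to \eqref{eq:J_zv_perp}, show the $\mathbb R V_0$ and $J_{\mathfrak z}V_0$ components contribute nothing, and conclude via Corollary \ref{cor:J2}. Your condition $(\star)$ is exactly the paper's intermediate condition $-\langle v,V_0\rangle[v,V_0]=[J_{[v,V_0]}v,V_0]$ rewritten with $s$, $z$ kept explicit; the only substantive difference is that you feed the decomposition directly into $(\star)$, while the paper first passes through Lemma \ref{lem:J_XUV} and Corollary \ref{cor:P_V(V_1)} to reach the form \eqref{eq:tot_geod_e} involving $P_{V_0}(v)$, which your version bypasses. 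One point you leave implicit is the degenerate case $V_0=0$, where $z'$ is no longer determined by $v$; the paper disposes of it in one line, and your argument also handles it trivially since both sides of $(\star)$ vanish.
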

\begin{proof}
For the special choice of $x_0$, the focal variety $\mathcal F_{x_0}$ and its tangent space at the identity are defined by equations \eqref{eq:minsok1} and \eqref{eq:T_eminsok1}, respectively. In particular, $(v,z,s)\in T_e\mathcal F_{x_0}$ is non-zero if and only if $v\neq 0$. 
Consider the reparameterization $\gamma\colon(-1,1)\to S$ of the unit speed geodesic curve $\hat\gamma$   with initial velocity $\hat\gamma'(0)=(v,z,s)\in T_e\mathcal F(v_0)$ given by equation \eqref{eq:geodesic}.
The point $\gamma(\theta)$ belongs to $\mathcal F_{x_0}$ for a given $\theta\in (-1,1)$ if and only if
\begin{equation}\label{eq:tot_geod_1}
\frac{1-\theta^2}{\chi(\theta)}=1-\frac{1}{4}\left\|\frac{2\theta(1-s\theta)}{\chi(\theta)}v+\frac{2\theta^2}{\chi(\theta)}J_zv\right\|^2+\frac{1}{2}\left\langle \frac{2\theta(1-s\theta)}{\chi(\theta)}v+\frac{2\theta^2}{\chi(\theta)}J_zv,V_0\right\rangle
\end{equation}
and
\begin{equation}\label{eq:tot_geod_2}
\frac{2\theta}{\chi(\theta)}z=-\frac12\left[\frac{2\theta(1-s\theta)}{\chi(\theta)}v+\frac{2\theta^2}{\chi(\theta)}J_zv,V_0\right].
\end{equation}
Equation \eqref{eq:tot_geod_1} is  equivalent to  the equation
\[
1-\theta^2=\chi(\theta)-\frac{\theta^2(1-s\theta)^2}{\chi(\theta)}\|v\|^2-\frac{\theta^4}{\chi(\theta)}\|z\|^2\|v\|^2+\theta(1-s\theta)\langle v,V_0\rangle+\theta^2\langle [v,V_0],z\rangle.
\]
As the right hand side of this equation can be simplified as
\begin{align*}
\chi(\theta)&-\theta^2\frac{(1-s\theta)^2+\|z\|^2\theta^2}{\chi(\theta)}\|v\|^2+2s\theta(1-s\theta)+\theta^2\langle -2z,z\rangle\\
&=(1-s\theta)^2+\|z\|^2\theta^2-\theta^2\|v\|^2+2s\theta(1-s\theta)-2\theta^2\|z\|^2 =1-\theta^2(\|v\|^2+s^2+\|z\|^2)=1-\theta^2,
\end{align*}
equation \eqref{eq:tot_geod_1} is always fulfilled. 
Equation \eqref{eq:tot_geod_2} holds if $\theta=0$. If $\theta \neq 0$, then substituting $s=\tfrac{1}{2}\langle v,V_0\rangle$ and $z=-\tfrac{1}{2} [v,V_0]$, it can be brought to the  equivalent form 
\begin{equation*}
-\langle v,V_0\rangle[v,V_0]=[J_{[v,V_0]}v,V_0].
\end{equation*}
With the help of Lemma \ref{lem:J_XUV},  this condition can be transformed into the equivalent condition
\[
\langle v,V_0\rangle[v,V_0]=[v,J_{[v,V_0]}V_0],
\]
which gives by Corollary \ref{cor:P_V(V_1)} the condition
\begin{equation}\label{eq:tot_geod_e}
-\langle v,V_0\rangle[v,V_0]=\|V_0\|^2 [v,P_{V_0}(v)],
\end{equation}
where $P_{V_0}$ is the orthogonal projection onto $J_{\mathfrak z}V_0$.
This computation shows that the focal variety $\mathcal F_{x_0}$ contains all geodesic curves starting from $e$ with initial velocity belonging to $T_e\mathcal F_{x_0}$ if and only if equation \eqref{eq:tot_geod_e} holds for any $v\in \mathfrak v$. If $V_0=0$, then $V_0$ satisfies both equation \eqref{eq:tot_geod_e} and the $J^2$-condition, thus, it is enough to consider the case $V_0\neq 0$.

Decompose $v$ as $v=v_1+v_2+v_3$, where $v_1=P_{V_0}(v)=J_zV_0\in J_{\mathfrak z}V_0$, $v_2=\lambda V_0\in \mathbb R V_0$, and $v_3\in (J_{\mathfrak z}V_0\oplus \mathbb R V_0)^{\perp}\cap\mathfrak v=\ker_{\mathfrak v}(\ad V_0)\cap V_0^{\perp}$. Plugging this decomposition into equation \eqref{eq:tot_geod_e}, we obtain
\begin{equation*}
-\lambda \|V_0\|^2 [J_zV_0,V_0]=\|V_0\|^2 [\lambda V_0+v_3,J_zV_0].
\end{equation*}
Since $v_3\perp V_0$, we have $[v_3,J_zV_0]=[J_zv_3,V_0]$ by Lemma \ref{lem:J_XUV}, thus, the above equation reduces to 
\begin{equation*}
0=\|V_0\|^2 [J_zv_3,V_0].
\end{equation*}
Observe that if $v$ is running over $\mathfrak v$, then $z$ can be an arbitrary element of $\mathfrak z$, and  $v_3$ can be an arbitrary element of $\ker_{\mathfrak v} (\ad V_0)\cap V_0^{\perp}$ independently, so condition \eqref{eq:tot_geod_e} holds for every $v$ if and only if $J_{\mathfrak z}(\ker_{\mathfrak v} (\ad V_0)\cap V_0^{\perp})\subseteq \ker_{\mathfrak v} (\ad V_0)$, however, by Corollary \ref{cor:J2}, this latter condition is fulfilled if and only if $V_0$ satisfies the $J^2$-condition.
\end{proof}
Consider now the general case.
\begin{theorem}\label{thm:tot_geod}
 Let $x_0=(V_0,Z_0,t_0)\in \mathfrak v\oplus \mathfrak z\oplus \mathbb R$ be an arbitrary point with $t_0<0$. Denote by $B\subset\mathfrak v$ the open ball of radius $2\sqrt{-t_0}$ centered at $V_0$. Using the system of equations \eqref{eq:F_x_0} defining $\mathcal F_{x_0}$, the focal variety $\mathcal F_{x_0}$ can be parameterized by the map  $\Upsilon\colon B\to \mathcal F_{x_0}$, 
 \[
 \Upsilon(\bar V)=\left(\bar V,Z_0-\tfrac12[\bar V,V_0],-t_0-\tfrac{1}{4}\|V-V_0\|^2\right).
 \]
 Then for $\bar V\in B$, the geodesic curves starting from $\Upsilon(\bar V)$ in a direction tangent to $\mathcal F_{x_0}$ at $\Upsilon(\bar V)$ stay on $\mathcal F_{x_0}$ if and only if $\bar V-V_0$ satisfies the $J^2$-condition.
\end{theorem}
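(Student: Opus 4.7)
The plan is to reduce the general case to the special case treated in Proposition \ref{prop:tot_geod_e} by means of a left translation. Since left translations are isometries of the Damek--Ricci space, they preserve geodesics, and by Corollary \ref{cor:left_transl_of_focal}, they permute focal varieties of the family $\{\mathcal{F}_{x_0}\}$. Hence the property ``every geodesic starting from $p\in\mathcal{F}_{x_0}$ in a direction tangent to $\mathcal{F}_{x_0}$ stays on $\mathcal{F}_{x_0}$'' is invariant under the simultaneous left translation of $\mathcal{F}_{x_0}$ and $p$.

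Concretely, set $p=\Upsilon(\bar V)$ and $\bar t=-t_0-\tfrac14\|\bar V-V_0\|^2>0$, so $p=\big(\bar V,\,Z_0-\tfrac12[\bar V,V_0],\,\bar t\big)$. Using the group inversion in $S$ and the formula \eqref{eq:left_translation}, I would compute $L_{p^{-1}}(x_0)$ directly. A short calculation, in which the $\mathfrak{z}$-components and the $[\bar V,V_0]$-terms cancel, gives
\[
L_{p^{-1}}(x_0)=\Big(\tfrac{V_0-\bar V}{\sqrt{\bar t}},\,0,\,\tfrac{t_0}{\bar t}\Big).
\]
Then I would verify the compatibility relation $\tfrac{t_0}{\bar t}=-1-\tfrac14\big\|\tfrac{V_0-\bar V}{\sqrt{\bar t}}\big\|^2$, which is exactly the defining identity of $\bar t$ rewritten. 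Thus $L_{p^{-1}}(x_0)$ has the form $(V_0',0,-1-\tfrac14\|V_0'\|^2)$ required by \eqref{eq:passing_through_e}, with $V_0':=(V_0-\bar V)/\sqrt{\bar t}$; equivalently, the translated focal variety $\mathcal{F}_{L_{p^{-1}}(x_0)}$ passes through the identity element $e$.

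Next, I would observe that the $J^2$-condition from Definition \ref{def:J^2_local} is invariant under multiplication by a non-zero real scalar (including the sign), because for each scalar $\lambda\neq 0$ and each $z_3\in\mathfrak{z}$ one has $J_{z_1}J_{z_2}(\lambda v)=\lambda J_{z_3}v=J_{z_3}(\lambda v)$ iff $J_{z_1}J_{z_2}v=J_{z_3}v$. Consequently, $V_0'=-(\bar V-V_0)/\sqrt{\bar t}$ satisfies the $J^2$-condition if and only if $\bar V-V_0$ does.

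Applying Proposition \ref{prop:tot_geod_e} to the translated configuration, the geodesics emanating from $e$ in directions tangent to $\mathcal{F}_{L_{p^{-1}}(x_0)}$ lie in $\mathcal{F}_{L_{p^{-1}}(x_0)}$ if and only if $V_0'$ satisfies the $J^2$-condition. Pulling this back by $L_p$, which is an isometry carrying $e$ to $p$ and $\mathcal{F}_{L_{p^{-1}}(x_0)}$ to $\mathcal{F}_{x_0}$ (Corollary \ref{cor:left_transl_of_focal}), gives the claim. The only place where genuine work is needed is the explicit computation of $L_{p^{-1}}(x_0)$; everything else is functorial. I expect no substantive obstacle beyond keeping track of signs and factors of $\sqrt{\bar t}$ in that calculation.
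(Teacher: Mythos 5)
Your proof is correct and follows essentially the same route as the paper: reduce to Proposition \ref{prop:tot_geod_e} by left-translating $\Upsilon(\bar V)$ to $e$, compute $L_{\Upsilon(\bar V)^{-1}}(x_0)=\bigl(\tfrac{V_0-\bar V}{\sqrt{\bar t}},0,\tfrac{t_0}{\bar t}\bigr)$, and invoke scalar invariance of the $J^2$-condition. You spell out two steps the paper leaves implicit (the compatibility with \eqref{eq:passing_through_e} and the scalar invariance), but the argument is the same.
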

\begin{proof}
Apply to $\mathcal F_{x_0}$ the left translation by $\Upsilon(\bar V)^{-1}$. This moves the point $\Upsilon(\bar V)$ to $e$ and the focal variety $\mathcal F_{x_0}$ to $\mathcal F_{\Upsilon(\bar V)^{-1}x_0}$ by Corollary \ref{cor:left_transl_of_focal}. Setting $(\bar V,\bar Z,\bar t)=\Upsilon(\bar V)$, we have 
\[\Upsilon(\bar V)^{-1}x_0=\left(-\frac{\bar V}{\sqrt{\bar t}},-\frac{\bar Z}{\bar t},\frac{1}{\bar t}\right)(V_0,Z_0,t_0)=
\left(\frac{V_0-\bar V}{\sqrt{\bar t}},\frac{Z_0-\bar Z-\frac{1}{2}[\bar V,V_0]}{\bar t},\frac{t_0}{\bar t}\right)=\left(\frac{V_0-\bar V}{\sqrt{\bar t}},0,\frac{t_0}{\bar t}\right).
\]
Since left translations are isometries of the Damek--Ricci space, $\mathcal F_{x_0}$ is the exponential image of $T_{\Upsilon(\bar V)}\mathcal F_{x_0}$ if and only if $\mathcal F_{\Upsilon(\bar V)^{-1}x_0}$ is the exponential image of  $T_e\mathcal F_{\Upsilon(\bar V)^{-1}x_0}$, and by Proposition \ref{prop:tot_geod_e}, this is equivalent to the condition that the vector $\frac{V_0-\bar V}{\sqrt{\bar t}}$ or simply the vector $\bar V-V_0$ satisfies the $J^2$-condition. 
\end{proof}

In contrast to the focal varieties of the type $\mathcal F_{x_0}$, a focal variety of the form $\mathcal F_{\os}^{\eta}$ is either totally geodesic or it has no points $p\in \mathcal F_{\os}^{\eta}$ for which $\mathcal F_{\os}^{\eta}$ is the exponential image of $T_p\mathcal F_{\os}^{\eta}$. Existence of a totally geodesic focal variety of type $\mathcal F_{\os}^{\eta}$ is equivalent to $\dim\mathfrak z\in \{0,1,3,7\}$.

\begin{theorem}\label{thm:totally_geodesic_F_os}
The focal variety 
\begin{equation}\label{eq:F_eta_v_0}
\mathcal F_{\os}^{\eta}=\{(V,Z,t)\in \mathfrak v\oplus\mathfrak z\oplus\mathbb R \mid [V-\bar V,v]=0 \text{ and }   \langle V-\bar V,v \rangle=2s\sqrt{\bar t}\}
\end{equation}
obtained in Proposition \ref{prop:7.2} (ii) for a non-zero vector $v$ and the further parameters $\bar V\in \mathfrak v$, $\bar t\in \mathbb R_+$, and $s=\pm\sqrt{1-\|v\|^2}$ is totally geodesic if and only if $v$ satisfies the $J^2$-condition. 
\end{theorem}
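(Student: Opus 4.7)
The plan is to mimic the proof of Proposition \ref{prop:tot_geod_e}: first reduce the problem by a left translation to an analogous question at $e$, then use the explicit form \eqref{eq:geodesic} of the geodesic to extract an algebraic condition on $v$, and finally identify that condition via Corollary \ref{cor:J2}. The key preliminary observation is that writing $\eta = L_{(\bar V,\bar Z,\bar t)}\circ \gamma_0$ with $\gamma_0$ the pregeodesic of \eqref{eq:geodesic} having initial velocity $2(v,0,s)\in T_eS$, one has $L_{\hat p}\circ \eta = L_{\hat p\cdot (\bar V,\bar Z,\bar t)}\circ \gamma_0$, so the defining vector $v$ is \emph{unchanged} under left translations; only the ``starting point'' $(\bar V,\bar Z,\bar t)$ is shifted. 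Combined with Proposition \ref{prop:left_transl_of_D*}, this shows that for any $p\in \mathcal F_{\os}^{\eta}$ the translated focal variety $L_{p^{-1}}(\mathcal F_{\os}^{\eta}) = \mathcal F_{\os}^{L_{p^{-1}}\circ \eta}$ is of the same form with the same $v$ and contains $e$. Since being totally geodesic is an isometric invariant, we may therefore assume that $e\in \mathcal F_{\os}^{\eta}$, which by \eqref{eq:F_eta_v_0} means $[\bar V,v]=0$ and $\langle \bar V,v\rangle=-2s\sqrt{\bar t}$.

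Under this normalization, Proposition \ref{prop:7.2}(ii) gives the tangent space
\[
T_e\mathcal F_{\os}^{\eta} = \bigl(\ker_{\mathfrak v}(\ad v)\cap v^{\perp}\bigr)\oplus \mathfrak z\oplus \mathfrak a.
\]
For a unit tangent vector $(v',z',s')\in T_e\mathcal F_{\os}^{\eta}$, the prolonged geodesic starting at $e$ in this direction is \eqref{eq:geodesic} with $(v,z,s)$ replaced by $(v',z',s')$, so its $\mathfrak v$-component is $V(\theta) = \frac{2\theta(1-s'\theta)}{\chi(\theta)}v'+\frac{2\theta^2}{\chi(\theta)}J_{z'}v'$. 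Using $v'\in \ker_{\mathfrak v}(\ad v)\cap v^{\perp}$, which implies $[v',v]=0$, $\langle v',v\rangle=0$, and $\langle J_{z'}v',v\rangle = \langle z',[v',v]\rangle = 0$, the two defining equations of $\mathcal F_{\os}^{\eta}$ along this geodesic reduce to
\[
[V(\theta)-\bar V,v]=\frac{2\theta^2}{\chi(\theta)}[J_{z'}v',v]=0,\qquad \langle V(\theta)-\bar V,v\rangle =-\langle \bar V,v\rangle =2s\sqrt{\bar t}.
\]
The second equation holds identically. The first holds for all $\theta$ precisely when $[J_{z'}v',v]=0$, and letting $(z',v')$ range over $\mathfrak z\times (\ker_{\mathfrak v}(\ad v)\cap v^{\perp})$ yields the inclusion $J_{\mathfrak z}(\ker_{\mathfrak v}(\ad v)\cap v^{\perp})\subseteq \ker_{\mathfrak v}(\ad v)$. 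By Corollary \ref{cor:J2} this is exactly the $J^2$-condition for $v$, which establishes the equivalence in both directions.

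The main care in the argument lies in the reduction step: one must verify that left translations act on the family $\{\mathcal F_{\os}^{\eta}\}$ in a way that preserves the defining vector $v$, so that the totally geodesic problem indeed collapses to a single algebraic condition on $v$ alone. Once this is secured, the local computation at $e$ is a direct application of \eqref{eq:geodesic} along the lines of the proof of Proposition \ref{prop:tot_geod_e}, and Corollary \ref{cor:J2} converts the resulting inclusion into the $J^2$-condition, delivering both directions of the equivalence simultaneously.
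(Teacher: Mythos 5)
Your proof is correct and follows essentially the same path as the paper's: reduce by a left translation to the case where the focal variety passes through $e$ (so that it becomes the model variety $\mathcal F(v)=\{(V,Z,t)\mid [V,v]=0,\ \langle V,v\rangle=0\}$), use the explicit pregeodesic formula \eqref{eq:geodesic} to reduce the condition that geodesics tangent at $e$ stay in the variety to $[J_{z'}v',v]=0$, and then identify $J_{\mathfrak z}(\ker_{\mathfrak v}(\ad v)\cap v^{\perp})\subseteq\ker_{\mathfrak v}(\ad v)$ with the $J^2$-condition via Corollary \ref{cor:J2}. The only cosmetic difference is that you track $\bar V$ explicitly through the normalization, whereas the paper observes directly from the affine form \eqref{eq:left_translation} that the image under such a translation is literally $\mathcal F(v)$; the two reductions amount to the same thing.
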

\begin{proof}
Left translations of the Damek--Ricci space are given by affine transformations \eqref{eq:left_translation} in the half-space model, the linear part of which leaves invariant every linear subspace containing the subspace $\mathfrak z\oplus \mathfrak a$, in particular all subspaces of the form $(J_{\mathfrak z}v\oplus \mathbb Rv)^{\perp}$. As the direction space of the affine half-space representing the focal variety $\mathcal F_{\os}^{\eta}$ in the half-space model is $(J_{\mathfrak z}v\oplus \mathbb Rv)^{\perp}$, any left translation which maps an arbitrary point of $\mathcal F_{\os}^{\eta}$ to $e$ maps the focal variety $\mathcal F_{\os}^{\eta}$ onto the focal variety  
\begin{equation}\label{eq:F(x_0)}
\mathcal F(v)=\{(V,Z,t)\mid [ V,v]=0\text{ and } \langle  V,v \rangle=0\}.
\end{equation}
This implies that the  focal variety $\mathcal F_{\os}^{\eta}$ is totally geodesic if and only if any unit speed geodesic curve $\hat\gamma\colon \mathbb R\to S$ starting from $\hat\gamma(0)=e$ with initial velocity $\hat\gamma'(0)\in T_e\mathcal F(v)$ stays in $\mathcal F(v)$. The tangent space of $\mathcal F(v)$ at $e$ is
\[
T_e\mathcal F(v)=\{(v',{z'},s')\in \mathfrak v\oplus\mathfrak z\oplus\mathbb R \mid [v',v]=0 \text{ and }   \langle v',v \rangle=0\}=(J_{\mathfrak z}v\oplus \mathbb R v)^{\perp}.
\]
 
The unit speed geodesic curve $\hat\gamma$  starting with initial velocity $\hat\gamma'(0)=(v',{z'},s')\in T_e\mathcal F(v)$ can be reparameterized by the pregeodesic $\gamma\colon(-1,1)\to S$ given by equation \eqref{eq:geodesic}  substituting $(v',z',s')$ for $(v,z,s)$. The point  $\gamma(\theta)$ belongs to $\mathcal F(v)$ if and only if 
\[\left[\frac{2\theta(1-s'\theta)}{\chi(\theta)}v'+\frac{2\theta^2}{\chi(\theta)}J_{z'}v',v\right]=0\quad\text{ and }\quad\left\langle \frac{2\theta(1-s'\theta)}{\chi(\theta)}v'+\frac{2\theta^2}{\chi(\theta)}J_{z'}v',v \right\rangle=0,\]
thus, $\hat\gamma$ stays in $\mathcal F(v)$ if and only if
\[\left[J_{z'}v',v\right]=0\quad\text{ and }\quad\left\langle J_{z'}v',v \right\rangle=0,\]
which is equivalent to $J_{z'}v'\in \ker_{\mathfrak v}(\ad v)\cap v^{\perp}$. We conclude that $\mathcal F_{\os}^{\eta}$ is totally geodesic if and only if $J_{z'}v'\in \ker_{\mathfrak v}(\ad v)\cap v^{\perp}$ for all ${z'}\in \mathfrak z$ and for all $v'\in \ker_{\mathfrak v}(\ad v)\cap v^{\perp}$, meaning that $\ker_{\mathfrak v}(\ad v)\cap v^{\perp}$ is a $\mathrm{Cl}(\mathfrak z,q)$-submodule of $\mathfrak v$. This completes the proof by Corollary \ref{cor:J2}.   
\end{proof}

\section{Homogeneity of the sphere-like isoparametric hypersurfaces\label{sec:homogeneity}}
\begin{theorem} \label{thm:homogeneity} In a symmetric Damek--Ricci space, all the focal varieties $\mathcal F_{x_0}$ and all the tubes about them are homogeneous. If a Damek--Ricci space is not symmetric, then none of the focal varieties $\mathcal F_{x_0}$ $(x_0\in \mathfrak n\times \mathbb R_-)$ is homogeneous, consequently, none of the tubes about them is. 
\end{theorem}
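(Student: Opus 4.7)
The plan is to treat the symmetric and non-symmetric cases separately: the former by combining Theorem \ref{thm:tot_geod} with known classifications, the latter by a contradiction argument resting on Theorem \ref{thm:tot_geod} itself.

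In the symmetric setting, Proposition \ref{prop:J^2_equiv_intersection} supplies the $J^2$-condition, and Theorem \ref{thm:tot_geod} then implies every $\mathcal F_{x_0}$ is totally geodesic. Invoking the classification of totally geodesic submanifolds in Damek--Ricci spaces by Kim--Nikolayevski--Park together with the classification of cohomogeneity-one isometric actions on the rank one symmetric spaces of non-compact type (Berndt--Tamaru; D\'iaz-Ramos--Dom\'inguez-V\'azquez--Rodr\'iguez-V\'azquez), both cited in the introduction, each tube around $\mathcal F_{x_0}$ is realized as a regular orbit of such a cohomogeneity-one action, yielding homogeneity of the focal variety and of every tube about it.

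For the non-symmetric case the key observation is that the point $p_0=\Upsilon(V_0)=(V_0,Z_0,-t_0)\in\mathcal F_{x_0}$, corresponding to $\bar V=V_0$ in Theorem \ref{thm:tot_geod}, is automatically a \emph{totally geodesic point} of $\mathcal F_{x_0}$: the vector $\bar V-V_0=0$ trivially satisfies the $J^2$-condition, so $\mathcal F_{x_0}=\exp_{p_0}(T_{p_0}\mathcal F_{x_0})$. I argue by contradiction. Assume $\mathcal F_{x_0}$ is homogeneous, i.e., some subgroup $H\subseteq\mathrm{Isom}(M)$ preserves $\mathcal F_{x_0}$ setwise and acts transitively on it. Since any isometry $\phi$ satisfies $\phi\circ\exp_p=\exp_{\phi(p)}\circ d\phi_p$ and maps $T_p\mathcal F_{x_0}$ to $T_{\phi(p)}\mathcal F_{x_0}$, the totally-geodesic-point property transfers from $p_0$ to every $p\in\mathcal F_{x_0}$. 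By Theorem \ref{thm:tot_geod}, every vector $\bar V-V_0$ with $\bar V$ in the open ball of radius $2\sqrt{-t_0}$ around $V_0$ then satisfies the $J^2$-condition. Consequently, the $J^2$-locus in $\mathfrak v$ contains an open neighborhood of the origin; since $J_{\lambda z}=\lambda J_z$ renders the $J^2$-condition invariant under non-zero scalar multiplication, the locus must coincide with all of $\mathfrak v$, and Proposition \ref{prop:J^2_equiv_intersection} then forces the space to be symmetric---a contradiction.

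Finally, the tube assertion reduces to the focal-variety assertion. If some tube $T=D_{x_0}^{-1}(c)$ were homogeneous under a subgroup $H'\subseteq\mathrm{Isom}(M)$, then $H'$ preserves the isoparametric foliation determined by $D_{x_0}$ and, in particular, the singular level set $\mathcal F_{x_0}$. The foot-point projection $\pi\colon T\to\mathcal F_{x_0}$ along unit normal geodesics is then $H'$-equivariant (isometries send the unique length-minimizing normal geodesic from a point of $T$ to its foot into the corresponding geodesic at the image), so transitivity on $T$ descends to transitivity on $\mathcal F_{x_0}$, contradicting the previous paragraph. I expect the most delicate step to be the scaling-and-covering conclusion that an open neighborhood of $0$ in the $J^2$-locus forces all of $\mathfrak v$ into the locus; the remaining pieces are essentially bookkeeping built from results already established in the paper.
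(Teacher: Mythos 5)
Your argument is correct and follows essentially the same strategy as the paper, with more detail filled in where the paper is terse (the non-symmetric case and the reduction from tubes to the focal variety). Two small points worth flagging. In the symmetric case, the paper's targeted reference is Berndt--Br\"uck \cite{Berndt_Bruck}, which classifies exactly which totally geodesic submanifolds of $\mathbb K\mathbf H^k$ are singular orbits of cohomogeneity-one isometric actions; your route via Kim--Nikolayevski--Park and the general cohomogeneity-one classifications can be made to work but is indirect, since those classifications list actions rather than directly certifying that the tubes about this particular totally geodesic $\mathbb K\mathbf H^{k-1}$ appear as orbits --- which is precisely what Berndt--Br\"uck supplies. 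Second, the scale invariance you invoke is of the $J^2$-locus under $v\mapsto\lambda v$, which is correct and follows from $J_{z_1}J_{z_2}(\lambda v)=\lambda J_{z_1}J_{z_2}v=J_{z_3}(\lambda v)$; the identity $J_{\lambda z}=\lambda J_z$ that you cite concerns scaling in the $\mathfrak z$ argument and, while true, is not the relevant invariance here. With those adjustments the proof matches the paper's.
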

\begin{proof}
If the space is symmetric, then it is a hyperbolic space $\mathbb K\mathbf H^{k}$ over an algebra $\mathbb K\in \{\mathbb R,\mathbb C,\mathbb H,\mathbb O\}$, ($k=2$ if $\mathbb K=\mathbb O)$, and its tangent spaces are $\mathbb K$-modules. In that case, each focal variety $\mathcal F_{x_0}$ is totally geodesic, and its tangent spaces are $\mathbb K$-submodules of corank $1$. Thus, the focal varieties are congruent to a totally geodesic $\mathbb K\mathbf H^{k-1}$. Totally geodesic submanifolds of $\mathbb K\mathbf H^{k}$ that are the singular orbits of a cohomogeneity one isometric action were classified by J.~Berndt and M.~Br\"uck  \cite{Berndt_Bruck}, and the submanifolds $\mathbb K\mathbf H^{k-1}$ are contained in their list, therefore the tubes about these focal surfaces are homogeneous.

Theorem \ref{thm:tot_geod} shows that if the ambient space is not symmetric, then each focal variety $\mathcal F_{x_0}$ has at least one point $p$ with the property that $\mathcal F_{x_0}$ is the exponential image of the tangent space $T_p\mathcal F_{x_0}$, and it also has points which do not have this property. This means that the focal variety cannot be homogeneous.
\end{proof}

\begin{theorem}\label{prop:constant_principal_curvatures} For $v\in \mathfrak v\setminus\{0\}$, the tubes about the focal variety 
 $\mathcal F_{\os}^{\eta}$ defined in  Proposition \ref{prop:7.2} (ii) have constant principal curvatures if and only if $v$ satisfies the $J^2$-condition.
 \end{theorem}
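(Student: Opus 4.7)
The plan is to treat the two implications separately, with the forward direction via homogeneity and the converse via an explicit shape-operator computation along normal geodesics.

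For the direction ``$v$ satisfies the $J^2$-condition $\Rightarrow$ constant principal curvatures'': By Theorem \ref{thm:totally_geodesic_F_os}, the focal variety $\mathcal F_{\os}^{\eta}$ is totally geodesic; by Proposition \ref{prop:left_transl_of_D*} we may translate so that $\mathcal F_{\os}^{\eta}=\mathcal F(v)$ from \eqref{eq:F(x_0)} passes through $e$. The Lie subalgebra $(\ker_{\mathfrak v}(\ad v)\cap v^{\perp})\oplus\mathfrak z\oplus\mathfrak a$ exponentiates to a subgroup of $S$ acting transitively on $\mathcal F(v)$ by left translations. Under the $J^2$-condition the sum $\mathfrak h:=\mathbb R v\oplus J_{\mathfrak z}v\oplus\mathfrak z\oplus\mathfrak a$ is itself a Lie subalgebra isomorphic to the Iwasawa part of a rank-one hyperbolic $\mathbb K\mathbf H^2$ with $\dim \mathfrak z=\dim\mathbb K-1\in\{0,1,3,7\}$ (use Lemma \ref{lem:J^2_char} and Proposition \ref{prop:J2_dim}). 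The isotropy of the corresponding totally geodesic subgroup at $e$ acts transitively on the unit sphere of $\mathbb R v\oplus J_{\mathfrak z}v$, and it extends to an ambient isometry of $S$ because the $J^2$-condition makes the orthogonal complement of $\mathbb R v\oplus J_{\mathfrak z}v$ in $\mathfrak v$ a $\mathrm{Cl}(\mathfrak z,q)$-submodule. Composing the two transitive actions yields an isometric action on $S$ preserving $\mathcal F(v)$ and transitive on every tube, from which constant principal curvatures follow.

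For the converse, suppose the tubes about $\mathcal F_{\os}^{\eta}=\mathcal F(v)$ (through $e$ after left translation) have constant principal curvatures. For a unit normal $\xi\in T_e^{\perp}\mathcal F(v)=\mathbb R v\oplus J_{\mathfrak z}v$ let $\gamma_\xi(t)=\exp_e(t\xi)$ and let $S^{\xi}_r$ be the shape operator of the tube at $\gamma_\xi(r)$; it satisfies the Riccati equation
\[
(S^{\xi}_r)'+(S^{\xi}_r)^2+R_{\dot\gamma_\xi(r)}=0,
\]
with initial behaviour as $r\downarrow 0$ determined by the second fundamental form of $\mathcal F(v)$ at $e$ in direction $\xi$ on the tangent part and the usual $r^{-1}$ singularity on the normal part. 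Expanding $S^{\xi}_r$ in $r$ using the explicit curvature tensor of the Damek--Ricci metric from \cite[Sec.~4.1.11]{DamekRicci}, and comparing the characteristic polynomial at $\xi=v/\|v\|$ versus $\xi=J_{\bar z}v/\|J_{\bar z}v\|$ for a unit $\bar z\in \mathfrak z$, spectral constancy in $\xi$ forces the inclusion $J_{\mathfrak z}(\ker_{\mathfrak v}(\ad v)\cap v^{\perp})\subseteq \ker_{\mathfrak v}(\ad v)$; by Corollary \ref{cor:J2} this is precisely the $J^2$-condition for $v$.

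The chief difficulty is the converse: the Riccati expansion must be pushed to an order sensitive enough that the algebraic inclusion characterizing the $J^2$-condition emerges, and one must combine correctly the contribution of the ambient curvature tensor $R_{\dot\gamma_\xi}$ with that of the second fundamental form of $\mathcal F(v)$, the latter being non-zero precisely when the $J^2$-condition fails. A smaller but non-trivial point in the forward direction is the verification that the rank-one isotropy at $e$ really does extend to an ambient isometry of $S$; this is where the $\mathrm{Cl}(\mathfrak z,q)$-invariance of the complementary submodule of $\mathbb R v\oplus J_{\mathfrak z}v$ in $\mathfrak v$, guaranteed by the $J^2$-condition, is essential.
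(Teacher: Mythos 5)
Your proposal diverges substantially from the paper, which settles both directions at once by invoking a ready-made criterion from the very construction these focal varieties specialize: D\'iaz-Ramos and Dom\'inguez-V\'azquez \cite{Ramos_Vazquez} prove that the tubes about $\exp(\mathfrak w\oplus\mathfrak z\oplus\mathfrak a)$ have constant principal curvatures if and only if the K\"ahler angles of the non-zero vectors of $\mathfrak w^{\perp}$ are constant. With $\mathfrak w=\ker_{\mathfrak v}(\ad v)\cap v^{\perp}$, one has $v\in\mathfrak w^{\perp}=J_{\mathfrak z}v\oplus\mathbb R v$ and $J_{\mathfrak z}v\subseteq\mathfrak w^{\perp}$, so the K\"ahler angles of $v$ itself all vanish; constancy over $\mathfrak w^{\perp}$ thus forces all K\"ahler angles to vanish, i.e.\ $J_{\mathfrak z}u\subseteq J_{\mathfrak z}v\oplus\mathbb R v$ for every $u\in J_{\mathfrak z}v\oplus\mathbb R v$, which by Lemma \ref{lem:J^2_char} is precisely the $J^2$-condition. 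Your route avoids this reference entirely, and the two halves of your plan have unequal status.

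Your forward direction is sound in outline but is essentially a re-derivation of the harder half of the paper's homogeneity theorem: under the $J^2$-condition, one builds an isometric automorphism of $S$ from a $\mathrm{Spin}(m)$-element acting on the irreducible module $J_{\mathfrak z}v\oplus\mathbb R v$, and this preserves $\mathcal F(v)$ because it is totally geodesic. That works, but it proves homogeneity, a strictly stronger conclusion than you need, and it only covers one direction. The real gap is the converse. You propose to Taylor-expand the tube shape operator $S^{\xi}_r$ via the Riccati equation with explicit curvature input and a second-fundamental-form contribution from $\mathcal F(v)$, and to deduce $J_{\mathfrak z}(\ker_{\mathfrak v}(\ad v)\cap v^{\perp})\subseteq\ker_{\mathfrak v}(\ad v)$ by comparing spectra over $\xi=v/\|v\|$ and $\xi=J_{\bar z}v/\|J_{\bar z}v\|$. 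Nothing here is actually computed: you do not identify the order at which the discrepancy appears, you do not exhibit the eigenvalue or characteristic-polynomial coefficient that is non-constant when the inclusion fails, and you do not handle the interaction between the normal Jacobi field blow-up and the second fundamental form of the (non-totally-geodesic) focal variety. This is precisely the content of the K\"ahler-angle criterion in \cite{Ramos_Vazquez}; without either citing it or replacing it by a completed computation, the converse is unproved, and since the theorem is an equivalence, the argument is incomplete.
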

\begin{proof}
The focal varieties are special cases of the construction of \cite{Ramos_Vazquez} with the choice $\mathfrak w=\ker_{\mathfrak v}(\ad v)\cap v^{\perp}$ of the subspace $\mathfrak w\leq \mathfrak v$. As it is proved in \cite{Ramos_Vazquez}, the tubes about $\mathcal F_{\os}^{\eta}$ have constant principal curvatures if and only if the K\"ahler angles of the non-zero vectors $u\in \mathfrak w^{\perp}=J_{\mathfrak z}v\oplus \mathbb R v$ are constant. Recall that the K\"ahler angles of $u\in \mathfrak w^{\perp}$ are defined to be the principal angles between the subspaces $J_{\mathfrak z} u$ and $\mathfrak w^{\perp}$. In our case, $v\in \mathfrak w^{\perp}=J_{\mathfrak z}v\oplus \mathbb R v$ and $J_{\mathfrak z}v\subseteq \mathfrak w^{\perp}$, thus, all the K\"ahler angles of the vector $v$ are equal to $0$. For this reason, the K\"ahler angles of the non-zero vectors of $\mathfrak w^{\perp}$ are constant if and only if they are all equal to $0$ for any $u\in \mathfrak w^{\perp}$. But this happens if and only if $J_{\mathfrak z}u\subseteq J_{\mathfrak z}v\oplus \mathbb R v$ for any $u\in J_{\mathfrak z}v\oplus \mathbb R v$ and the latter condition is equivalent to the $J^2$-condition for $v$. 
\end{proof}

\begin{theorem}
 The tubes about the focal variety 
 $\mathcal F_{\os}^{\eta}$ defined in Proposition \ref{prop:7.2} (ii) are homogeneous if and only if $v\in \mathfrak v\setminus\{0\}$ satisfies the $J^2$-condition.
 \end{theorem}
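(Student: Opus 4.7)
The implication from homogeneity to the $J^2$-condition is immediate: any homogeneous Riemannian hypersurface has constant principal curvatures, so by Theorem \ref{prop:constant_principal_curvatures}, $v$ must satisfy the $J^2$-condition.

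For the converse, suppose $v$ satisfies the $J^2$-condition. Homogeneity of a tube is preserved under left translations, so by Proposition \ref{prop:left_transl_of_D*} we may assume after a suitable translation that $e\in \mathcal F_{\os}^{\eta}$, so $\mathcal F_{\os}^{\eta}$ reduces to the submanifold $\mathcal F(v)$ from equation \eqref{eq:F(x_0)}. Set $\mathfrak w=\ker_{\mathfrak v}(\ad v)\cap v^{\perp}$, so that $T_e\mathcal F(v)=\mathfrak w\oplus\mathfrak z\oplus\mathfrak a$ and the normal space at $e$ is $\mathbb R v\oplus J_{\mathfrak z}v$. By Corollary \ref{cor:J2}, $\mathfrak w$ is a $\mathrm{Cl}(\mathfrak z,q)$-submodule of $\mathfrak v$; consequently $\mathfrak s'=\mathfrak w\oplus\mathfrak z\oplus\mathfrak a$ is itself a Damek--Ricci Lie subalgebra of $\mathfrak s$, and its connected subgroup $S_{\mathfrak w}\leq S$ coincides with $\mathcal F(v)$ as a subset of the half-space model. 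Left translations by elements of $S_{\mathfrak w}$ are isometries of $S$ that preserve $\mathcal F(v)$ setwise and act simply transitively on it.

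To lift transitivity from $\mathcal F(v)$ to every tube, we invoke the isotropy subgroup $K$ of $e$ in the full isometry group $I(S)$. According to the description of $I(S)$ for a Damek--Ricci space (see \cite[Ch.~4]{DamekRicci}), $K$ consists of orthogonal pairs $(\phi,\psi)\in O(\mathfrak v)\times O(\mathfrak z)$ satisfying the compatibility $\phi\circ J_z=J_{\psi(z)}\circ\phi$ for all $z\in\mathfrak z$. The subgroup $K_0\leq K$ preserving $\mathfrak w$ automatically preserves its orthogonal complement $\mathbb R v\oplus J_{\mathfrak z}v$, which by Proposition \ref{prop:J2_dim} is an irreducible $\mathrm{Cl}(\mathfrak z,q)$-module of dimension $m+1\in\{1,2,4,8\}$, identified with $\mathbb K\in\{\mathbb R,\mathbb C,\mathbb H,\mathbb O\}$. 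The lift of $\mathrm{Spin}(\mathfrak z)\subseteq K_0$ acts on this submodule through the spin representation, which is transitive on its unit sphere in each of the four cases (being elementary for $m\in\{0,1,3\}$ and the classical Hopf transitivity of $\mathrm{Spin}(7)$ on $S^7\subseteq\mathbb O$ for $m=7$). Given two points $p=\exp_q(rn)$ and $p'=\exp_{q'}(rn')$ on a tube of radius $r$ about $\mathcal F(v)$, pick $\ell_1,\ell_2\in S_{\mathfrak w}$ with $\ell_1(q)=e$ and $\ell_2(e)=q'$, and $k\in K_0$ with $k_*(\ell_{1*}n)=(\ell_{2*})^{-1}n'$. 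Then $\ell_2\circ k\circ\ell_1$ is an isometry of $S$ sending $p$ to $p'$, proving homogeneity.

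The main technical obstacle is the verification in the last paragraph that the $\mathrm{Spin}(\mathfrak z)$-component of $K_0$ is genuinely realized inside $I(S)$ and that its action on the irreducible submodule $\mathbb R v\oplus J_{\mathfrak z}v$ is the full spin representation. In a non-symmetric Damek--Ricci space the isotropy at $e$ is in general strictly smaller than in the symmetric cases, so one must use the particular structure of the $J^2$-vectors as described by Theorem \ref{thm:J^2_vectors}: after reducing to the isotypic component containing $v$, the relevant orthogonal-pair compatibility condition is satisfied by the natural diagonal $\mathrm{Spin}(\mathfrak z)$-action on that isotypic component, which restricts to the spin representation on the particular irreducible submodule spanned by $v$ and its Clifford orbit. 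Once this case-by-case identification is carried out, transitivity on the unit normal sphere at $e$ follows from the classical transitivity properties of the spin representation.
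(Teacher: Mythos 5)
Your overall strategy matches the paper's: use Theorem \ref{prop:constant_principal_curvatures} for the easy direction, then for the converse reduce by left translation to $e$, use that $\mathcal F(v)$ is an orbit of a subgroup, and try to realize the transitivity on the unit normal sphere at $e$ through a $\mathrm{Spin}$-action inside the isotropy group. However, there are two genuine gaps.

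First, your claim that the spin representation is transitive on the unit sphere of the irreducible module, ``being elementary for $m\in\{0,1,3\}$,'' is false for $m\in\{0,1\}$. For $m=0$ the group $\mathrm{Spin}(0)$ is trivial, and for $m=1$ the group $\mathrm{Spin}(1)\cong\mathbb Z/2$ acts by $\pm 1$; neither acts transitively on the unit sphere of the one- or two-dimensional irreducible module, so the spin argument simply does not furnish the required isometries. The paper handles $m\in\{0,1\}$ by a separate route: in those cases the ambient space is a real or complex hyperbolic space, $\mathcal F_{\os}^{\eta}$ is a totally geodesic $\mathbb K\mathbf H^{k-1}$, and the homogeneity of its tubes is read off from the Berndt--Br\"uck classification of cohomogeneity-one actions. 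The spin-transitivity argument is used only for $m\in\{3,7\}$, where $\mathrm{Spin}(3)\cong S^3$ and Borel's theorem for $\mathrm{Spin}(7)$ give the needed transitivity. You would need to split off the $m\in\{0,1\}$ cases and argue them differently.

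Second, even after producing $k\in K_0$ mapping $\ell_{1*}n$ to $\ell_{2*}^{-1}n'$, you need to check that $k$ (and hence $\ell_2\circ k\circ\ell_1$) maps the focal variety $\mathcal F(v)$ onto itself; otherwise it is not clear that the tube is preserved. Stabilizing the linear subspace $\mathfrak w$ at the Lie-algebra level only shows that the derivative of $k$ at $e$ preserves $T_e\mathcal F(v)$. The paper closes this gap by invoking Theorem \ref{thm:totally_geodesic_F_os}: since $v$ satisfies the $J^2$-condition, $\mathcal F(v)$ is totally geodesic, hence equal to $\exp_e\bigl(T_e\mathcal F(v)\bigr)$, and therefore any isometry fixing $e$ and preserving $T_e\mathcal F(v)$ preserves $\mathcal F(v)$. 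You should include this step. Finally, your concluding paragraph correctly flags that realizing $\mathrm{Spin}(\mathfrak z)$ inside $I(S)$ requires verification; the paper does this concretely by writing down the Lie algebra automorphism $\iota\colon (w,z,a)\mapsto\bigl(J(\sigma)(w),\rho(\sigma)(z),a\bigr)$ for $\sigma\in\mathrm{Spin}(m)$ and checking directly that it is orthogonal and respects the brackets, so that it integrates to an isometric automorphism of $S$ fixing $e$.
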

\begin{proof}
If the tubes about $\mathcal F_{\os}^{\eta}$ are homogeneous, then they have constant principal curvatures and $v$ satisfies the $J^2$-condition by Theorem \ref{prop:constant_principal_curvatures}. 

Conversely, assume that $v\neq 0$ satisfies the $J^2$-condition. The existence of such a vector implies $m=\dim \mathfrak z\in \{0,1,3,7\}$ by Proposition \ref{prop:J2_dim}. 

If $m\in \{0,1\}$, then the ambient space is isometric with $\mathbb K\mathbf H^k$ for some $k$ and $\mathbb K\in \{\mathbb R,\mathbb C\}$, hence it is symmetric. By Theorem \ref{thm:totally_geodesic_F_os}, the focal varieties $\mathcal F_{\os}^{\eta}$ are totally geodesic in a symmetric space. The tangent spaces of the ambient space are linear spaces over $\mathbb K$, and the tangent space of $\mathcal F_{\os}^{\eta}$ at any point is a $\mathbb K$-linear subspace of codimension $1$. Thus, the tubes about $\mathcal F_{\os}^{\eta}$ are homogeneous by the same reason as in the proof of the first part of Theorem \ref{thm:homogeneity}.

Assume that $m\in \{3,7\}$.
Consider two arbitrary points $p_1$, $p_2$ on the tube $\mathcal T$ of radius $r$ about $\mathcal F_{\os}^{\eta}$, and denote by $\tilde p_1$ and $\tilde p_2$ the  points of the focal variety $\mathcal F_{\os}^{\eta}$ lying nearest to them. The left translations by $\tilde p_1^{-1}$ and $\tilde p_2^{-1}$ map the tube $\mathcal T$ onto the tube of radius $r$ about the focal variety $\mathcal F(v)$ defined by equation \eqref{eq:F(x_0)} and map the points $\tilde p_1$ and $\tilde p_2$ to $e$. We can write the points  $\tilde p_1^{-1}p_1$ and $\tilde p_2^{-1}p_2$ as $\exp_e(\xi_1)$ and $\exp_e(\xi_2)$ respectively with some uniquely defined vectors $\xi_1,\xi_2\in (T_e\mathcal F(v))^{\perp}$ of length $r$.

Thus, to prove that $\mathcal T$ is homogeneous, it is enough to show that
there is an isometry $I$ of the space such that $I(\mathcal F(v))=\mathcal F(v)$, $I(e)=e$, and $T_eI(\xi_1)=\xi_2$. In the case $\|\xi_1\|=\|\xi_2\|=0$,  we can choose the identity map for $I$, so assume $\|\xi_1\|=\|\xi_2\|\neq 0$.

Let $\mathrm{Spin}(m)\subset \mathrm{Cl}(\mathfrak z,q)$ be the spin group and $\rho\colon \mathrm{Spin}(m)\to \mathrm{SO}(\mathfrak z,q)$ its canonical representation. Recall that $\mathfrak z$ is embedded into  $\mathrm{Cl}(\mathfrak z,q)$ as a subspace, and for $\sigma \in \mathrm{Spin}(m)$, we have $(\rho(\sigma))(z)=\sigma z \sigma^{-1}$. As the vector $v$ satisfies the $J^2$-condition, it generates an irreducible Clifford module $J_{\mathfrak z} v\oplus\mathbb Rv=(T_e\mathcal F(v))^{\perp}$, which contains both $\xi_1$ and $\xi_2$. It is known that for $m\in \{3,7\}$, the group $\mathrm{Spin}(m)$ acts transitively on the unit sphere of any irreducible $\mathrm{Cl}(\mathfrak z,q)$-module. For $m=3$, this follows from the fact that the group $\mathrm{Spin}(3)$ is isomorphic to the group of unit quaternions, see \cite[Ch.~I., Thm.~8.1]{Lawson_Michelsohn}, and this group acts on itself transitively both by left and right translations.  The case $m=7$ is a theorem of A.~Borel, see \cite[Ch.~I., Thm.~8.2]{Lawson_Michelsohn}. As a consequence, there is an element $\sigma \in \mathrm{Spin}(m)$, such that $(J(\sigma))(\xi_1)=\xi_2$. The orthogonal transformation $\iota\colon \mathfrak v\oplus \mathfrak z\oplus \mathfrak a\to \mathfrak v\oplus \mathfrak z\oplus \mathfrak a$, 
$$
\iota((v,z,a))=((J(\sigma))(v), (\rho(\sigma))(z), a)
$$ 
is an automorphism of the Lie algebra $\mathfrak s=\mathfrak v\oplus \mathfrak z\oplus \mathfrak a$. This is an easy consequence of the identity
\[
J_{\iota(z)}\iota(v)=\big(J(\sigma)\circ J(z)\circ J(\sigma)^{-1}\circ J(\sigma)\big)(v)= \big(J(\sigma)\circ J(z)\big)(v)=\iota(J_zv) \quad \forall  v\in \mathfrak v, \, z\in \mathfrak z.
\]
Thus, $\iota$ is the derivative of an isometric automorphism $I$ of the group $S$ at $e$. It is clear from the construction of $I$ that $I(e)=e$ and $T_eI(\xi_1)=\iota(\xi_1)=\xi_2$. This implies that 
\begin{equation}\label{eq:iota_JZ_xi_1}
\iota(J_{\mathfrak z}\xi_1\oplus \mathbb R\xi_1)=J_{\iota(\mathfrak z)}\iota(\xi_1)\oplus \mathbb R\iota(\xi_1)=J_{\mathfrak z}\xi_2\oplus \mathbb R\xi_2.
\end{equation}
Since $\xi_1$ and $\xi_2$ are in the $(m+1)$-dimensional Clifford module $J_{\mathfrak z}v\oplus \mathbb R v$, the   $(m+1)$-dimensional linear spaces $J_{\mathfrak z}\xi_1\oplus \mathbb R\xi_1$
and $J_{\mathfrak z}\xi_2\oplus \mathbb R\xi_2$ are also contained in $J_{\mathfrak z}v\oplus \mathbb R v$, which implies 
\begin{equation}\label{eq:JZ_xi_i_Jz_v_0}
J_{\mathfrak z}\xi_1\oplus \mathbb R\xi_1=J_{\mathfrak z}v\oplus \mathbb R v=
J_{\mathfrak z}\xi_2\oplus \mathbb R\xi_2.
\end{equation}
Equations \eqref{eq:iota_JZ_xi_1} and \eqref{eq:JZ_xi_i_Jz_v_0} give $\iota(J_{\mathfrak z}v\oplus \mathbb R v)=J_{\mathfrak z}v\oplus \mathbb R v$, and since $\iota$ is orthogonal, 
\[
\iota(T_e\mathcal F(v))=\iota \left((J_{\mathfrak z}v\oplus \mathbb R v)^{\perp}\right)=(J_{\mathfrak z}v\oplus \mathbb R v)^{\perp}=T_e\mathcal F(v).
\]
By Theorem \ref{thm:totally_geodesic_F_os}, the focal variety $\mathcal F(v)$ is a totally geodesic submanifold, hence it is the Riemannian exponential image of $T_e\mathcal F(v)$. Since the derivative $\iota$ of the isometry $I$ at $e$ maps $T_e\mathcal F(v)$ into itself, $I(\mathcal F(v))=\mathcal F(v)$.
\end{proof}

\section{Mean curvature of the tubes about the focal varieties\label{sec:mean_curvature}}
\begin{theorem} Let $h$ be the trace of the shape operator of a regular hypersurface $\Sigma$ of one of the isoparametric functions $D_{x_0}$ or $D_{\os}^{\eta}$ on the Damek--Ricci space. Then  
\[
h=\begin{cases}
-\frac {m+n}2\coth(r/2)-\frac{m}2\tanh(r/2)& \text{if $\Sigma$ is a sphere of radius $r$,}\\[6pt]
-(m+\frac{n}{2})& \text{if $\Sigma$ is a horosphere,}\\[6pt]
-\frac {m+n}2\tanh(r/2)-\frac{m}2\coth(r/2)& \text{if $\Sigma$ is a tube of radius $r$ about $\mathcal F_{x_0}$ with $x_0\in \mathfrak n\times \mathbb R_-$ or $\mathcal F_{\os}^{\eta}$.} \\
\end{cases}
\]
\end{theorem}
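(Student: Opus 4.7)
The plan is to apply Proposition \ref{prop:tube_mean_curvature} to each of the two isoparametric functions $D_{x_0}$ and $D_{\os}^{\eta}$ in turn, and then convert the level value $c$ into the geometric radius $r$ via either the distorted distance identity or Proposition \ref{prop:tube_radius}. For $D_{x_0}$, the proof of Theorem \ref{thm:D_x_0_isoparametric} already supplies
\[
a(c) = \left(m + \tfrac{n}{2} + 1\right) c - 2(m+1) t_0, \qquad b(c) = c^2 - 4 t_0 c,
\]
so that $-2 a(c) + b'(c) = -(2m+n) c + 4 m t_0$.

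In the spherical case $t_0 > 0$, the distorted distance formula preceding \eqref{eq:D_function} gives $c = 4 t_0 \cosh^2(r/2)$ and hence $b(c) = 16 t_0^2 \sinh^2(r/2)\cosh^2(r/2)$. Substituting into $h = (-2 a(c) + b'(c))/(2 \sqrt{b(c)})$ and writing $\cosh^2(r/2) = 1 + \sinh^2(r/2)$ collapses the numerator to $-(m+n)\cosh^2(r/2) - m\sinh^2(r/2)$; division by $2 \sinh(r/2)\cosh(r/2)$ then produces exactly $-\tfrac{m+n}{2}\coth(r/2) - \tfrac{m}{2}\tanh(r/2)$. The horosphere case $t_0 = 0$ is immediate, since then $-2 a(c) + b'(c) = -(2m+n) c$ and $\sqrt{b(c)} = c$, giving $h = -(m + n/2)$.

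For the tube case $t_0 < 0$, the minimum value of $D_{x_0}$ is $0$, and Proposition \ref{prop:tube_radius} yields
\[
r(c) = \int_{0}^{c} \frac{\ud x}{\sqrt{x^2 - 4 t_0 x}} = \operatorname{arcosh}\!\left(\frac{c - 2 t_0}{-2 t_0}\right), \qquad c = -4 t_0 \sinh^2(r/2),
\]
so $b(c) = 16 t_0^2 \sinh^2(r/2)\cosh^2(r/2)$ again. Now the numerator reads $4 t_0 [(2m+n)\sinh^2(r/2) + m]$, and the same hyperbolic manipulation --- but with $\cosh^2$ and $\sinh^2$ interchanged, because $t_0$ has switched sign --- produces $-\tfrac{m+n}{2}\tanh(r/2) - \tfrac{m}{2}\coth(r/2)$.

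For tubes about $\mathcal{F}_{\os}^{\eta}$, the idea is to reduce to the normal form of the remark following Proposition \ref{prop:7.2}. Since mean curvature is an isometric invariant and is moreover unchanged under positive rescaling of the isoparametric function (which only reparameterizes the level sets), Proposition \ref{prop:left_transl_of_D*} together with the explicit formula for $D_{\os}^{\eta}$ allows me to replace $D_{\os}^{\eta}$ by the particular function $F(V,Z,t) = \frac{1}{t}\sum_{i=1}^{m+1}\langle V, E_i\rangle^2$ of that remark, so $|I| = m+1$. Identities \eqref{eq:F_isoparametric} then give $a(c) = (m + n/2 + 1) c + 2(m+1)$ and $b(c) = c^2 + 4 c$; Proposition \ref{prop:tube_radius} yields $c = 4\sinh^2(r/2)$, and the identical bookkeeping delivers the same expression as in the $\mathcal{F}_{x_0}$ tube case. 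The main obstacle throughout is the sign bookkeeping for $\sqrt{b(c)}$ (which must be the positive root consistent with the outward unit normal $\nabla F/\sqrt{b\circ F}$) and rearranging the numerator so that it splits cleanly into an $(m+n)$-term in $\cosh^2(r/2)$ and an $m$-term in $\sinh^2(r/2)$ (or vice versa); the internal consistency check is that switching $t_0 \leftrightarrow -t_0$ precisely interchanges $\coth(r/2)$ and $\tanh(r/2)$, as the statement predicts.
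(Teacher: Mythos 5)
Your proof is correct, and for the tube cases about $\mathcal F_{x_0}$ (with $t_0<0$) and $\mathcal F_{\os}^{\eta}$ it coincides with the paper's argument: same $a$, $b$, same $c=-4t_0\sinh^2(r/2)$ (respectively $c=4\sinh^2(r/2)$), and the same substitution into Proposition \ref{prop:tube_mean_curvature}. Where you differ is the sphere/horosphere case. The paper computes $h$ for spheres from the volume density function of the Damek--Ricci space via the classical identity $h=-\partial_r\bigl(\ln (r^{m+n}\omega)\bigr)$, $\omega(r)=\cosh^m(r/2)\bigl(\sinh(r/2)/(r/2)\bigr)^{m+n}$, and then obtains the horosphere value by letting $r\to\infty$. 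You instead apply Proposition \ref{prop:tube_mean_curvature} directly, converting the level value to the radius through the distorted-distance identity $4\cosh^2(d_{x_0}/2)=D_{x_0}/t_0$, i.e.\ $c=4t_0\cosh^2(r/2)$; your horosphere case drops out at $t_0=0$ without any limit. Your route is more uniform --- all three cases are handled by a single mechanism, and the predicted $t_0\leftrightarrow -t_0$ / $\coth\leftrightarrow\tanh$ symmetry serves as a built-in sanity check --- while the paper's route reuses a well-known density-function identity and avoids the slight bookkeeping of verifying that $\sqrt{b(c)}=4|t_0|\sinh(r/2)\cosh(r/2)$ carries the correct sign. One small remark: your invocation of scale-invariance of the $h$-formula in the $\mathcal F_{\os}^{\eta}$ case (to pass from $D_{\os}^{\eta}$, which carries a factor $\|v\|^2$, to the normalized $F$ of the remark) is a point the paper glosses over, and it is worth stating as you did, since $a$ and $b$ individually do change under rescaling even though $\bigl(-2a(c)+b'(c)\bigr)/\bigl(2\sqrt{b(c)}\bigr)$ does not.
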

\begin{proof}
The case of spheres and horospheres is well known. For a complete harmonic manifold $(M,g)$, there is a smooth function $\omega\colon \mathbb R\to \mathbb R$, called the volume density function, defined by the identity $\omega(\|\xi\|)=\sqrt{\det (G(\xi))}$, where $\xi\in T_pM$ is an arbitrary tangent vector of $M$, and $G(\xi)$ is the matrix of the pull-back form $(T_{\xi}\exp_p)^*(g_{\exp_p(\xi)})$ with respect to a $g_p$-orthonormal basis of $T_{\xi}(T_pM)\cong T_pM$. It is known \cite{Damek_Ricci} that the volume density function $\omega$ of the Damek--Ricci space is
\[\omega(r)=\cosh^m(r/2)\left(\frac{\sinh(r/2)}{r/2}\right)^{m+n}.\]
The function $h$ for a sphere of radius $r$ can be expressed with the help of the volume density function  
\[h=-\partial_r(\ln r^{m+n}\omega)\]
(see \cite{Szabo}),  from which we obtain the formula for the mean curvature of spheres, and taking the limit $r\to\infty$ gives the formula for the horospheres.

Consider now a regular level set $\Sigma=D_{x_0}^{-1}(c)$ of a function $D_{x_0}$ with $x_0=(V_0,Z_0,t_0)$, $t_0<0$. Then, according to the proof of Theorem \ref{thm:D_x_0_isoparametric}, the functions $a$ and $b$ certifying that $D_{x_0}$ is isoparametric are $a(x)=(m+\frac{n}{2}+1)x-2(m+1)t_0$ and $b(x)=x^2-4t_0x$.  The minimal value of $D_{x_0}$ is $0$ and by Proposition \ref{prop:tube_radius}, the hypersurface $\Sigma$ is a tube of radius
\[
r=\int_0^c\frac{\id x}{\sqrt{x^2-4t_0x}}=\ln(\sqrt{c-4t_0}+\sqrt c)-2\ln(\sqrt{-4t_0})
\]
about $\mathcal F_{x_0}$.
Expressing  $c$ as a function of $r$ from this equation, we get 
\[
c=-4t_0\sinh^2(r/2).
\]

By Proposition \ref{prop:tube_mean_curvature}, the trace of the shape operator of $\Sigma$ is 
\begin{align*}
h&=\frac{-2a(c)+b'(c)}{2\sqrt{b(c)}}=-\frac{(m+n/2)c-2mt_0}{\sqrt{c^2-4t_0c}}=-\frac {m+n}2\sqrt{\frac{c}{c-4t_0}}-\frac{m}2\sqrt{\frac{c-4t_0}{c}}
\\&=-\frac {m+n}2\tanh(r/2)-\frac{m}2\coth(r/2),
\end{align*}
as claimed. 

By equation \eqref{eq:F_isoparametric}, the functions $a$ and $b$ corresponding to the isoparametric functions $D_{\os}^{\eta}$ are $a(x)=(m+\frac{n}{2}+1)x+2(m+1)$ and $b(x)=x^2+4x$. Hence, substituting $t_0=-1$ into the above formulae obtained for the level sets of $D_{x_0}$, we get the corresponding formulae for $D_{\os}^{\eta}$, which completes the proof.  
\end{proof}

\bibliographystyle{acm}
\bibliography{isoparametric}
\end{document}